\title{
   A nonlinear analogue of additive commutators
    }
\authors{
Truong Huu Dung, Tran Nam Son, Pham Duy Vinh
    }
\abstract{%
We study a nonlinear analogue of additive commutators, known as \textit{polynomial commutators}, defined by \( p(ab) - p(ba) \) for a polynomial \( p \in F[x] \) and elements \( a, b \) in an algebra \( R \) over a field \( F \). Originally introduced by Laffey and West for matrices over fields, this notion is here extended to broader algebraic settings. We first show that in division rings, polynomial commutators can generate maximal subfields and even the entire ring as an algebra. In the matrix setting, we prove that matrices similar to ones with zero diagonal are polynomial commutators, and under mild assumptions, every matrix can be written as a product of at most three such commutators. Furthermore, we demonstrate that the matrix algebra can be decomposed as the sum of its center and the linear span of all polynomial commutators. Using the theory of rational identities in division rings, we also exhibit that the trace of a polynomial commutator in the matrix ring can be nonzero in noncommutative cases. Lastly, we explore the size of polynomial commutators via matrix norms.				
    }
\keywords{
    Matrix algebra; Division ring; Polynomial; Commutator
    }
\begin{document}

	\section{Introduction}\label{intro}

In an associative algebra \( R \) over a field $F$, expressions of the form \( ab-ba \) for \( a, b \in R \) are known as \emph{additive commutators}, which quantify the extent to which multiplication in \( R \) fails to be commutative. This naturally leads to a  nonlinear perspective that one may study expressions such as \( p(ab) - p(ba) \), where \( p \in F[x] \) is a polynomial in the variable $x$ with coefficients in a field \( F \). We implicitly assume that \( p \) has no constant term when the algebra $R$ is non-unital, ensuring that \( p(ab) \) and \( p(ba) \) are still meaningful. When \( p \) is linear, this reduces to the classical commutator \( ab - ba \), but for nonlinear polynomials, such expressions capture more subtle interactions between noncommutativity and polynomial structure. 	It turns out that this idea, originally introduced by T.~J.~Laffey and T.~T.~West \cite{Pa_LaWe_93} in the context of matrix algebras over fields, motivates our investigation. The first part of this paper extends their framework to a more general setting --- specifically, to certain rings that are not necessarily commutative. From this starting point, the remainder of the paper follows a natural line of development, as detailed below.

To set the stage, we now fix an associative algebra \( R \) over a field \( F \) and let \( p\in F[x] \) be a nonconstant polynomial in the variable \( x \) over \( F \). A \textit{polynomial commutator} in \( R \) relative to $p$ is defined as any element of the form \( p(ab) - p(ba) \) for some \( a, b \in R \). It is worth noting that when the algebra $R$ is a non-unital algebra, we tacitly restrict our attention to the  polynomial \( p \) without constant term, so that expressions like \( p(ab) \) and \( p(ba) \) make sense without relying on the presence of the multiplicative identity. In the special case where \( p(x) = x \), the polynomial commutator \( p(ab) - p(ba) \) reduces to the familiar form \( ab - ba \), commonly referred to as an \emph{additive commutator}, or simply a \emph{commutator}.
For clarity and brevity, we introduce the bracket notation: \begin{itemize}
	\item \([a,b] = ab - ba\) for the (additive) commutator of $a$ and $b$ in $R$,
	\item  and  
	\(p[a,b] = p(ab) - p(ba)\) for the polynomial commutator of $a$ and $b$ in $R$ relative to $p$.
\end{itemize} We then write  
\begin{itemize}
	\item \([R, R] = \{ [a,b] \mid a,b \in R \}\) for the set of all commutators in $R$,
	\item  and  
	\(p[R, R] = \{ p[a,b] \mid a,b \in R \}\) for the corresponding set of polynomial commutators in $R$.
\end{itemize}It is well known that the additive commutator $ab-ba$ measures how far two elements $a$ and $b$ in $R$ deviate from commuting, so we are led to the following question. 

\begin{question}\label{qS}
	Let $R$ be an associative noncommutative algebra over a field $F$. Can one find two noncommuting elements $a$ and $b$ in $R$ such that $p(ab)\neq p(ba)$?
\end{question} In other words, Question~\ref{qS} asks whether, in a noncommutative algebra, whether the intersection $p[R,R]\cap(R\setminus\{0\})$ is nonempty. To investigate Question~\ref{qS}, observe that for any \( a, b \in R \), a straightforward substitution that if $$p(x) =a_0+ a_1x + a_2x^2 + \dots + a_mx^m,$$ where $m\geq1$ is an integer and  $a_0,a_1,a_2,\ldots,a_m\in F$, then
\begin{equation}
	p[a,b] = a_1[a,b] + a_2((ab)^2 - (ba)^2) + \ldots + a_m((ab)^m - (ba)^m).\label{sub}
\end{equation} Clearly, if $ab=ba$, then every term vanishes and $p[a,b]=0$. To settle our question, we consider the sequence
$(ab)^i - (ba)^i$ for $i\geq1$.
In the setting of a division ring, the powerful machinery of rational identities guarantees that one can choose a pair of noncommuting elements \(a,b\) for which
$(ab)^i - (ba)^i \neq 0$ for all $i\ge1.$
Therefore, in every division ring, the answer to Question~\ref{qS} is affirmative, as we will establish in Section~\ref{section division}. We are grateful to the Mathematics Stack Exchange community, particularly the contributor to Question 5062751, for insightful comments that helped shape this answer.

Returning to expression~\eqref{sub}, 	to analyze the difference \( p[a,b]=p(ab) - p(ba) \), we can apply a standard telescoping identity valid in any (possibly noncommutative) ring. For elements \( X, Y \) and any positive integer \( i \), one has
\[
X^i - Y^i = \sum_{k=0}^{i-1} X^k (X - Y) Y^{i-1-k}.
\]	Taking \( X = ab \) and \( Y = ba \), this gives
\[
(ab)^i - (ba)^i = \sum_{k=0}^{i-1} (ab)^k (ab - ba)(ba)^{i-1-k} = \sum_{k=0}^{i-1} (ab)^k [a, b] (ba)^{i-1-k},
\]
where \( [a, b] = ab - ba \) denotes the additive commutator. Substituting the expression \eqref{sub} for \( (ab)^i - (ba)^i \), we obtain
\begin{equation}\label{te}
	p[a,b] = \sum_{i=1}^{n} a_i \sum_{k=0}^{i-1} (ab)^k [a, b] (ba)^{i-1-k},
\end{equation}as a linear combination of elements obtained by sandwiching the commutator \( [a, b] \) between powers of $ab$ and $ba$, with coefficients depending on \( p \) and the powers of \( ab \) and \( ba \). This expansion \eqref{te} is central to our analysis and allows us to deduce Theorem~\ref{po} directly. We omit the proof, as it follows immediately from the definition of the commutator ideal, namely, the two-sided ideal generated by all additive commutators of the form \( [a, b] = ab - ba \) with \( a, b \in R \). In particular, every element in the commutator ideal is a finite sum of expressions of the form \( a [c, d] b \), where \( a, b, c, d \in R \).

\begin{theorem}\label{po}
	Let $R$ be an associative algebra over a field \( F \) and let \( p\in F[x] \) be a nonconstant polynomial. Then, the commutator ideal of $R$ contains $p[R,R]$.
\end{theorem}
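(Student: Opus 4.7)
The plan is to read off the conclusion directly from identity~\eqref{te}, which the excerpt has already derived. Writing $p(x) = a_0 + a_1 x + \cdots + a_n x^n$ (with $a_0 = 0$ in the non-unital case by hypothesis), the constant term cancels in $p(ab) - p(ba)$ regardless, and identity~\eqref{te} displays what remains as a finite $F$-linear combination of ``sandwich'' terms of the form $(ab)^k [a,b] (ba)^{i-1-k}$, in which the additive commutator $[a,b]$ is bracketed between powers of $ab$ and $ba$.

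Next I would invoke the definition of the commutator ideal $C(R)$, namely the two-sided ideal of $R$ generated by $[R,R]$. By definition, every product $u\,[c,d]\,v$ with $u,v,c,d \in R$ lies in $C(R)$, as does any finite $F$-linear combination of such products (since an ideal of an $F$-algebra is automatically an $F$-submodule). Each sandwich $(ab)^k [a,b] (ba)^{i-1-k}$ has exactly this shape, so it belongs to $C(R)$, and therefore so does the entire combination~\eqref{te}, which equals $p[a,b]$. Since $a,b \in R$ were arbitrary, $p[R,R] \subseteq C(R)$, as required.

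There is essentially no obstacle here, which is precisely why the authors announce they will omit the proof. The only minor care needed is (i) the (trivial) cancellation of the constant term, which lets us start the sum at $i=1$, and (ii) the hypothesis in the non-unital case that $p$ has no constant term, so that $p(ab)$ and $p(ba)$ are meaningful and the sandwich endpoints $(ab)^0[a,b](ba)^{i-1}$ and $(ab)^{i-1}[a,b](ba)^0$ are interpreted as $[a,b](ba)^{i-1}$ and $(ab)^{i-1}[a,b]$; both points are already built into the setup of the excerpt.
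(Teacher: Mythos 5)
Your proposal matches the paper's intended argument exactly: the authors explicitly derive identity~\eqref{te} and then omit the proof on the grounds that each sandwich term $(ab)^k[a,b](ba)^{i-1-k}$ lies in the commutator ideal by definition, so the finite $F$-linear combination does too. Your careful remarks about the constant term and the non-unital case are minor but correct refinements of the same reasoning.
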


Continuing along the lines of Theorem~\ref{po}, we recall from \cite[Theorem 3.4]{Pa_Ga_25} that if a unital associative algebra \( R \) over a field \( F \) coincides with its commutator ideal, then \( R \) is generated by commutators as an \( F \)-algebra. Moreover, there exists a positive integer \( N \) such that for every element \( a \in R \), there are elements \( b_j, c_j, d_j, e_j \in R \), for \( j = 1, \ldots, N \), satisfying
\[
a = \sum_{j=1}^N [b_j, c_j][d_j, e_j].
\]	With this result in mind, it is natural to ask whether an analogous result holds in the context of \emph{polynomial commutators}, namely
\begin{question}\label{commutator ideal}
	Does there exist a unital associative $F$-algebra \(R\) such that the two-sided ideal of \(R\) generated by $p[R,R]$
	coincides with \(R\), where $p\in F[x]$ is a nonconstant polynomial? If so, must \(R\) then be generated as an \(F\)-algebra by $p[R,R]$? Furthermore, in that case, is there a positive integer \(N\) for which every element \(\alpha\in R\) can be expressed as
	\[
	\alpha =\sum_{j=1}^N p[a_j, b_j]p[c_j, d_j]	\]
	for suitable \(a_j,b_j,c_j,d_j\in R\)?	
\end{question} To address Question~\ref{commutator ideal}, we continue to work within the framework of division rings. For the first two parts of the question, we establish affirmative results in the case where \( R \) is a division ring. For the final part, we focus on a specific instance: the division algebra of real quaternions $\mathbb{H}$, as detailed in Section~\ref{section division}. One of the central steps in addressing the final part of Question~\ref{commutator ideal} is to understand the structure of the set \( p[\mathbb{H}, \mathbb{H}] \). Hence, we now turn our attention to describing \( p[R, R] \), beginning with the case where \( R \) is a matrix ring.

In the case of matrices over fields, by exploiting the commutativity of the base field along with a result of T.~J.~Laffey and T.~T.~West \cite{Pa_LaWe_93}, it is shown that the set of all polynomial commutators coincides exactly with the set of trace-zero matrices. In particular,	working over the matrix ring \( R = \mathrm{M}_n(F) \), where \( F \) is a field of characteristic zero and \( n > 1 \), it was shown in \cite{Pa_LaWe_93} that $$\{p(ab) - p(ba) \mid a, b \in R\} \supseteq \{a \in R \mid \mathrm{trace}(a) = 0\}.$$ On the other hand, this inclusion follows directly from the linearity and cyclic invariance of the trace function, namely $ \mathrm{trace}(ab) = \mathrm{trace}(ba) $ and  $\mathrm{trace}(\lambda a + \mu b) = \lambda \, \mathrm{trace}(a) + \mu \, \mathrm{trace}(b) $ for all scalars \( \lambda, \mu \in F \). 	Therefore, we obtain the equality
\[
\{p(ab) - p(ba) \mid a, b \in R\} = \{a \in R \mid \mathrm{trace}(a) = 0\}.
\]
Furthermore, a classical theorem of Shoda, Albert, and Muckenhoupt \cite{Pa_Sho_36, Pa_Al_57} asserts that every trace-zero matrix is a commutator, that is,
\[
\{a \in R \mid \mathrm{trace}(a) = 0\} = \{ab - ba \mid a, b \in R\}.
\]
Hence, we conclude $p[R,R]=[R,R]$, proving that the nonlinear case reduces	to the linear case. This raises a natural and intriguing question: 
\begin{question}\label{que}
	Does it hold that  $p[R,R]=[R,R]$?
\end{question}

To address Question~\ref{que}, we begin by revisiting a key component of the argument from~\cite{Pa_LaWe_93}, focusing first on whether the trace of a polynomial commutator is necessarily zero. As demonstrated in Theorem~\ref{zz}, this expectation is false. Despite the challenges encountered in the matrix case, we are able to achieve a complete characterization of polynomial commutators in a closely related setting - the real quaternion algebra $\mathbb{H}$, thereby answering Question~\ref{que} in the case  $R=\mathbb{H}$, as shown in Theorem~\ref{qua}. This result is particularly striking and can be seen as a noncommutative analogue of the trace-zero matrix phenomenon.

Guided by Theorem~\ref{zz},  we consider a weaker form of Question~\ref{que}: 
\begin{question}\label{que1}
	If $R=\mathrm{M}_n(A)$ where $n\geq2$ is an integer and $A$ is a noncommutative associative $F$-algebra, does $p[R,R] \supseteq \{a \in R \mid \mathrm{trace}(a) = 0\}$?
\end{question}
To explore Question~\ref{que1}, we revisit~\cite{Pa_LaWe_93}, where over fields, every noncentral traceless matrix is similar to one with zero diagonal, a property extending to positive characteristic fields~\cite[Proposition~8]{Pa_AmRo_94}. For \( R = \mathrm{M}_n(D) \) with a noncommutative division ring \( D \), it is known~\cite[Proposition 1.8]{Pa_AmRo_94} that any noncentral matrix is similar to one with diagonal entries zero except possibly one \( d \in D \), but it remains unclear whether \( d \) can always be chosen as zero when the matrix is traceless. Thus, we focus on matrices with zero diagonal entries in Section~\ref{matrix}.

Following  Theorem~\ref{qua} again, it is known that \(p[\mathbb{H},\mathbb{H}]\) is naturally an \(\mathbb{R}\)-vector space, so we now turn to the analogous question:

\begin{question}\label{what}
	What algebraic structures does \(p[R,R]\) carry?
\end{question}

Question~\ref{what} will be explored in Section~\ref{section what}, where we investigate the structures generated by \( p[R, R] \), such as semigroups and linear spans. Other algebraic structures, like ideals, subrings, or subalgebras generated by \( p[R, R] \), may also emerge from this analysis. However, we will not consider the subgroup generated by \( p[R, R] \), as this would require invertibility of the elements involved.

In situations where one cannot give an explicit description of every polynomial commutator, a natural alternative is to gauge them by size. To that end, we work in the setting of normed vector spaces, specifically for the algebra of complex \( n \times n \) matrices equipped with some norm. Although many of the ideas we develop may extend to the broader context of Banach algebras, focusing on \( \mathrm{M}_n(\mathbb{C}) \) makes the discussion more concrete and intuitively clear. Suppose \( V \) and \( W \) are normed vector spaces over the same field (either \( \mathbb{R} \) or \( \mathbb{C} \)), and let \( A: V \to W \) be a linear map. Recall that  such an \( A \) is continuous exactly when there exists a real number \( c \ge 0 \) so that
$\|Av\|_W \le c\,\|v\|_V,$ for every $v \in V.$
In plain terms, \( A \) never stretches a vector by more than a factor of \( c \).  This quantity measures the maximum amount by which \( A \) can lengthen any vector.

Within this framework, it is natural to ask:
\begin{question}\label{estimate}
	Given a nonconstant polynomial \( p \in \mathbb{C}[x] \) and matrices \( A, B \in \mathrm{M}_n(\mathbb{C}) \), does there exist a real number \( c\geq0 \) such that
	$\left\| p(AB) - p(BA) \right\| \le c \left\| A \right\|\cdot\| B \| ?$
\end{question}Recall in \cite[Theorem 4.1, page 225]{Pa_BoWe_05} that a key motivating estimate is the Böttcher-Wenzel inequality, which asserts that for all \( A, B \in \mathrm{M}_n(\mathbb{C}) \), the Frobenius norm of their commutator satisfies
$\|AB - BA\|_F \le \sqrt{2} \, \|A\|_F \, \|B\|_F.$ In connection with Question~\ref{estimate}, we therefore pose the following question:

\begin{question}\label{estimate1}
	For any nonconstant polynomial \( p \in \mathbb{C}[x] \) and matrices \( A, B \in \mathrm{M}_n(\mathbb{C}) \), does there exist a real number \( c \ge 0 \) such that $\left\| p(AB) - p(BA) \right\| \le c \left\| AB - BA \right\|?$
\end{question}

In Section~\ref{how big}, we tackle Questions~\ref{estimate} and~\ref{estimate1} by deriving three fundamental tools: a Frobenius‐norm bound, a numerical‐radius bound, and an integral representation for the norm of a polynomial commutator. These results give concrete size estimates for polynomial commutators in \( \mathrm{M}_n(\mathbb{C}) \). 

To summarize, the paper is organized as follows. Section~\ref{section division} focuses on Questions~\ref{qS}, and~\ref{commutator ideal}, examining it in the context of division rings. Section~\ref{matrix} addresses Questions~\ref{que},  and~\ref{que1}, which pertain to matrix algebras. In Section~\ref{section what}, we turn to Question~\ref{what}, where the case of matrix algebras and AW*-algebras is considered. Finally, Section~\ref{how big} explores Questions~\ref{estimate} and~\ref{estimate1}, concerning norm estimates for polynomial commutators.

\section{Division algebras}\label{section division}

This section is devoted to addressing Question~\ref{qS} in the setting of division algebras. We begin with a few simple but useful observations. The conclusion is immediate when \( p \) is a nonconstant linear polynomial. Indeed, since \( R \) is noncommutative, we can always choose noncommuting elements \( a, b \in R \), and then  $p[a, b] = a_1[a, b] \ne 0,$
where \( a_1 \ne 0 \) because \( p \) is nonconstant and linear, and \( [a, b] \ne 0 \) by choice.  

We now turn to the case where \( p \) is  quadratic. Consider \( R = \mathbb{H} \), the division ring of real quaternions with standard basis \( \{1, i, j, k\} \). Take \( a = i \), \( b = j \), so that \( ab = k \) and \( ba = -k \). Then we compute: $$p[a, b] = p(k) - p(-k) = a_2(k^2 - (-k)^2) + a_1(k - (-k)) = 2a_1k,$$
where \( a_1, a_2 \in \mathbb{R} \), and \( a_2 \ne 0 \) since \( p \) is quadratic. In this case, \( p[a, b] \ne 0 \) whenever \( a_1 \ne 0 \).  To cover the remaining possibility where \( a_1 = 0 \), we choose \( a = i \), \( b = i + j \). Then,
\[
ab = i(i + j) = -1 + k, \quad ba = (i + j)i = -1 - k,
\]
and so, $$p[a, b] = a_2((-1 + k)^2 - (-1 - k)^2) = a_2((-2k) - (2k)) = -4a_2k \ne 0.$$ Therefore, even when \( p \) is purely quadratic, we still have \( p(ab) \ne p(ba) \). This highlights the relevance of understanding the expressions \( (ab)^i - (ba)^i \) in such contexts, as mentioned in the following theorem.

\begin{theorem}\label{division}
	If $R$ is a noncommutative division algebra over a field $F$, then for any positive integer \( n \), there exist elements \( \alpha, \beta \in R \) such that $(\alpha\beta)^n \neq (\beta\alpha)^n.$
\end{theorem}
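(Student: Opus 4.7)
The plan is to recast the theorem as a non-identity statement and then apply the structure theory of polynomial-identity rings. Consider the noncommutative polynomial $f_n(x,y) := (xy)^n - (yx)^n$ in the free $F$-algebra $F\langle x, y\rangle$. The monomials $(xy)^n$ and $(yx)^n$ are distinct reduced words, so $f_n$ is a nonzero element of the free algebra. The theorem is therefore equivalent to the assertion that $f_n$ is not a polynomial identity of $R$, i.e., that some specialisation $\alpha,\beta \in R$ produces a nonzero value. Note that the polynomial $p$ in the hypothesis plays no role in the statement, so I ignore it.

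I would then argue by contradiction: assume $f_n$ vanishes on all of $R$. Then $R$ is a primitive ring satisfying a nontrivial polynomial identity, so Kaplansky's theorem forces $R$ to be a central simple algebra of finite dimension $m^2$ over its centre $K := Z(R)$. Since $R$ is noncommutative we have $m \geq 2$, and since $F \subseteq K$ the centre $K$ is itself infinite. Passing to an algebraic closure $L$ of $K$ splits $R$, yielding an isomorphism $R \otimes_K L \cong \mathrm{M}_m(L)$. Because $K$ is infinite, a standard linearisation argument shows that every polynomial identity of $R$ over $K$ remains one after this scalar extension, so $f_n$ would be a polynomial identity of $\mathrm{M}_m(L)$.

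The final step is to refute this by an explicit computation. Setting
\[
X = \begin{pmatrix} 1 & 0 \\ 0 & 0 \end{pmatrix}, \qquad Y = \begin{pmatrix} 1 & 1 \\ 1 & 1 \end{pmatrix}
\]
and embedding them in the top-left $2 \times 2$ block of $\mathrm{M}_m(L)$ (possible since $m \geq 2$), one checks directly that
\[
XY = \begin{pmatrix} 1 & 1 \\ 0 & 0 \end{pmatrix}, \qquad YX = \begin{pmatrix} 1 & 0 \\ 1 & 0 \end{pmatrix}
\]
are both nonzero idempotents, so $(XY)^n = XY$ and $(YX)^n = YX$ for every $n \geq 1$. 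Since $XY \neq YX$, this gives $f_n(X,Y) \neq 0$, contradicting the assumption and completing the argument.

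The only delicate point in this outline is the scalar-extension step: one must know that a polynomial identity of a central simple algebra over an infinite field survives tensoring to a splitting field. This is a classical fact, but it is the essential place where the hypothesis that $F$ is infinite enters, since the required linearisation of identities into multilinear ones needs an infinite field of scalars. Everything else reduces to recognising a nonzero element of the free algebra and a routine $2 \times 2$ matrix calculation.
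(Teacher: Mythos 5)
Your proof is correct, but it takes a genuinely different route from the paper's. The paper argues by contradiction in a single step: if $(\alpha\beta)^n = (\beta\alpha)^n$ for all $\alpha,\beta$, then $(\alpha\beta)^n(\beta\alpha)^{-n} = 1$ is a group identity on $R^\times = R \setminus\{0\}$, and Amitsur's theorem \cite[Theorem~19]{Pa_Am_66} on rational/group identities in division rings with infinite center then forces $R$ to be commutative. You instead stay entirely within the theory of polynomial identities: you observe that $f_n = (xy)^n - (yx)^n$ is a nonzero element of the free algebra, apply Kaplansky's theorem to conclude $R$ is finite-dimensional central simple over its (infinite, since $F \subseteq Z(R)$) center, pass to a splitting field $L$ via the standard scalar-extension principle for identities over an infinite base field, and refute the identity for $\mathrm{M}_m(L)$, $m \ge 2$, by an explicit pair of idempotents $XY \neq YX$. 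Both proofs locate the infinitude hypothesis in the same place — it is what allows identities (group-theoretic or polynomial, respectively) to transfer from the division ring to matrices — but your argument substitutes a hands-on $2\times 2$ matrix computation for the black-box appeal to Amitsur's theorem, which makes it more elementary and self-contained, at the cost of a slightly longer chain (Kaplansky, plus the linearisation/scalar-extension lemma). You also rightly note that the hypothesis on the polynomial $p$ is inert in this particular statement; the paper's proof likewise makes no use of it.

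One small remark: the scalar-extension step you flag as delicate is indeed the one place requiring care, but your appeal to it is standard and correct. An alternative within the paper's own toolkit would have been to invoke Lemma~\ref{identity}, which already records that $\mathcal{I}(R) = \mathcal{I}(\mathrm{M}_n(F)) = \mathcal{I}(\mathrm{M}_n(L))$ for a finite-dimensional division $F$-algebra $R$; using that lemma instead of rederiving the transfer principle would have shortened your write-up and tied it more closely to the paper's framework.
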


\begin{proof}
	Assume, for contradiction, that \((\alpha\beta)^n = (\beta\alpha)^n\) for all \(\alpha, \beta \in R\).
	Given nonzero elements \(x\) and \(y\) in \(R\), substituting \(\alpha = x\) and \(\beta = yx^{-1}\) yields
	$xy^{n}x^{-1} = y^{n},$
	and hence \(xy^{n} = y^{n}x\).
	Thus the \(n\)-th power of every element is central.
	However, this contradicts a classical theorem of Kaplansky \cite[Theorem]{Pa_Ka_51}, which asserts that such a relation forces \(R\) to be commutative.
	Therefore, there must exist \(\alpha, \beta \in R\) such that \((\alpha\beta)^n \neq (\beta\alpha)^n\).
\end{proof}

With Theorem~\ref{division} established, we are now prepared to confirm the following result, which provides an affirmative answer to Question~\ref{qS} in the context of division algebras. Although Theorem~\ref{division} will play a role in the proof, we emphasize that the assumption that the base field \( F \) is infinite is not required here. Moreover, Theorem~\ref{division} may be regarded as a special case of the following theorem for the polynomial \( p(x) = x^n \), whereas the result below holds for any nonconstant polynomial and does not rely on the infinitude of \( F \).

\begin{theorem}\label{division1}
	Let \( R \) be a noncommutative division $F$-algebra, and let \( p \in F[x] \) be a nonconstant polynomial. Then there exist elements \( \alpha, \beta \in R \) such that \( p(\alpha\beta) \ne p(\beta\alpha) \).
\end{theorem}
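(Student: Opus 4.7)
The plan is to argue by contradiction, splitting into two cases based on whether the center $Z(R)$ is infinite. Write $p(x) = a_0 + a_1 x + \cdots + a_n x^n$ with $a_n \neq 0$ and $n \geq 1$, so that $p[\alpha,\beta] = \sum_{i=1}^n a_i\bigl((\alpha\beta)^i - (\beta\alpha)^i\bigr)$, and suppose toward a contradiction that $p[\alpha,\beta] = 0$ for every pair $(\alpha,\beta) \in R \times R$.

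If $Z(R)$ is infinite, I would substitute $\beta \mapsto \mu\beta$ for $\mu \in Z(R)$. Centrality yields $(\mu\alpha\beta)^i = \mu^i(\alpha\beta)^i$, so the assumed vanishing becomes
\[
\sum_{i=1}^n a_i \mu^i\bigl((\alpha\beta)^i - (\beta\alpha)^i\bigr) = 0 \quad \text{for every } \mu \in Z(R).
\]
Since $\mu$ commutes with each coefficient $a_i\bigl((\alpha\beta)^i - (\beta\alpha)^i\bigr) \in R$, a Vandermonde interpolation at $n$ distinct nonzero values of $\mu$ forces each such coefficient to vanish; in particular $(\alpha\beta)^n = (\beta\alpha)^n$ for all $\alpha,\beta \in R$. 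Viewing $R$ as a noncommutative division algebra over the infinite field $Z(R) \supseteq F$, Theorem~\ref{division} supplies $\alpha,\beta$ with $(\alpha\beta)^n \neq (\beta\alpha)^n$, giving the desired contradiction.

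If $Z(R)$ is finite, the Vandermonde step is no longer available, so I would switch strategies. Wedderburn's little theorem (a finite division ring is commutative), together with the noncommutativity of $R$, forces $R$ to be infinite and moreover infinite-dimensional over the finite field $Z(R)$, for otherwise $R$ itself would be finite. Kaplansky's theorem on primitive PI algebras then implies that $R$ cannot satisfy any nontrivial polynomial identity. However, $p(xy) - p(yx) = \sum_{i=1}^n a_i\bigl((xy)^i - (yx)^i\bigr)$ is nonzero in the free algebra $F\langle x,y\rangle$: the monomials $(xy)^i$ and $(yx)^i$ are distinct for each $i \geq 1$, the homogeneous degrees $2i$ separate contributions coming from different $i$, and $a_n \neq 0$. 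Therefore $p(xy) - p(yx)$ cannot vanish identically on $R \times R$, producing the required pair.

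The main obstacle is the finite-center case, where one must bypass the Vandermonde route via the deeper structural input that a noncommutative division algebra with finite center is infinite-dimensional over that center (hence non-PI by Kaplansky) in order to rule out the vanishing of this specific free-algebra element on $R$.
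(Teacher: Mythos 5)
Your argument is correct, and it follows a genuinely different route from the paper's. The paper invokes Kaplansky's theorem at the outset to reduce to the case that $R$ is finite-dimensional, chooses a basis so that $(a,b)\mapsto (ab)^d-(ba)^d$ becomes a tuple of homogeneous coordinate polynomials of degree $2d$, uses Wedderburn's little theorem to ensure the base field is infinite, and then separates $p(ab)-p(ba)$ by homogeneous degree, invoking Theorem~\ref{division} to see that each piece is nontrivial. Your Vandermonde scaling $\beta\mapsto\mu\beta$ carries out the same degree-separation intrinsically --- no basis, no finite-dimensionality --- and requires only that $Z(R)$ supply $n$ distinct nonzero scalars, which is exactly your infinite-center case; there, Theorem~\ref{division} over $Z(R)$ closes the argument. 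Kaplansky then enters only in the residual finite-center case, and in contrapositive form: Wedderburn forces $\dim_{Z(R)}R=\infty$, so $R$ is non-PI, and the manifestly nonzero element $p(xy)-p(yx)$ of $F\langle x,y\rangle$ therefore cannot vanish identically. Both proofs ultimately rest on Kaplansky and Theorem~\ref{division}, but your split cleanly separates the elementary scaling step from the structure-theoretic input. A further tidying is available if you prefer: case on PI versus non-PI rather than on $|Z(R)|$. If $R$ is non-PI you are done at once as in your second case; if $R$ is PI, Kaplansky plus Wedderburn force $Z(R)$ infinite and your scaling argument applies, making the two branches exactly complementary rather than overlapping (as they do now, since a PI ring with infinite center is handled by both).
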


\begin{proof}
	Suppose, for contradiction, that \(p(ab) = p(ba)\) for all \(a,b \in R\).
	This is equivalent to requiring that the noncommutative polynomial $p(x_1x_2) - p(x_2x_1),$
	defined in the free algebra over \(F\) in the noncommuting variables \(x_1\) and \(x_2\), vanishes under all substitutions \(x_1 \mapsto a\), \(x_2 \mapsto b\) with \(a,b \in R\).
	By a celebrated theorem of Kaplansky~\cite[Theorem~1]{Pa_Ka_48}, it follows that \(R\) is finite-dimensional over \(F\). Thus \(R \cong F^n\), where \(n=\dim_F R\), and multiplication $\mu : F^n \times F^n \to F^n, \mu(a,b)=ab,$
	is a bilinear map and hence is given coordinate-wise by polynomial functions that are homogeneous of degree \(2\).
	In particular, for any \(d\ge 1\), the map $(a,b) \mapsto (ab)^d - (ba)^d$
	is given coordinate-wise by homogeneous polynomials of degree \(2d\), and it is nonzero by Theorem~\ref{division}.
	Hence, in the identity $$p(ab)-p(ba)=\sum_{d=1}^{k} c_d \big( (ab)^d - (ba)^d \big),$$
	where \(\displaystyle p(x)=\sum_{d=0}^k c_d x^d\) with \(c_k\neq 0\), the summands are linearly independent because they are homogeneous polynomial functions of distinct degrees. Since the left-hand side vanishes for all \(a,b\), linear independence forces \(c_d=0\) for all \(d>0\); hence \(p\) is constant, a contradiction.
	Here we implicitly use that \(F\) is infinite, which is always the case for the center of a noncommutative finite-dimensional division algebra by Wedderburn’s ``Little'' Theorem~\cite[(13.1), Chapter~5, \S13, p.~203]{Bo_Lam_01}.
	The contrapositive of the above argument shows that if \(p\in F[x]\) is nonconstant, then \(p(ab)-p(ba)\) is not identically zero on \(R\).
\end{proof}

Theorem~\ref{division1} also shows that for any nonconstant polynomial, there always exists a nonzero polynomial commutator in a noncommutative division ring. In other words, if all polynomial commutators vanish for every nonconstant polynomial, then the ring must be commutative, reducing the situation to a trivial case.

We now turn to Question~\ref{commutator ideal}. The following theorem provides an affirmative answer for the first two parts of the question in the case where $R$ is a division ring.

\begin{theorem}\label{gener}

	If $R$ is a noncommutative finite-dimensional division algebra over a field $F$ and $p\in F[x]$ is a nonconstant polynomial, then $R$ is generated by $p[R,R]$ as an $F$-algebra.
\end{theorem}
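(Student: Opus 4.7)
The plan is to let $A$ denote the unital $F$-subalgebra of $R$ generated by $p[R,R]$ and to conclude $A = R$ via the Cartan--Brauer--Hua theorem. Three quick observations set the stage. First, $A$ is stable under every inner automorphism of $R$: for any $c \in R^\times$ and $a,b \in R$, one has $c\, p[a,b]\, c^{-1} = p[cac^{-1}, cbc^{-1}] \in p[R,R]$. Second, because $R$ is finite-dimensional over $F$ and $A$ is a unital $F$-subalgebra of the division ring $R$, $A$ is a finite-dimensional domain and hence itself a sub-division ring of $R$ (left multiplication by any nonzero element is an injective, therefore surjective, $F$-linear self-map of $A$). Third, the Cartan--Brauer--Hua theorem then forces either $A = R$, which is the desired conclusion, or $A \subseteq Z(R)$.

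The principal task is thus to rule out $A \subseteq Z(R)$. Suppose for contradiction that $p[R,R] \subseteq K := Z(R)$; one may assume $R$ is noncommutative (the commutative case $R=F$ is trivial), so by Wedderburn's little theorem $K$ is an infinite field. Writing $n^2 = \dim_K R$ with $n \geq 2$, one has $R \otimes_K \overline{K} \cong \mathrm{M}_n(\overline{K})$. I would fix a $K$-basis $\{e_i\}_{i=1}^{n^2}$ of $R$ and consider elements $X = \sum \lambda_i e_i$, $Y = \sum \mu_j e_j$, $Z = \sum \nu_k e_k$ of $R \otimes_K \overline{K}$ with indeterminate scalars $\lambda_i,\mu_j,\nu_k \in \overline{K}$. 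Expanded in the basis $\{e_i\}$, the expression $[p(XY) - p(YX),\, Z]$ has coordinates that are polynomials in these scalars whose coefficients depend only on the $K$-structure constants of $R$, hence lie in $K$. By hypothesis these coordinate polynomials vanish on $K^{3n^2}$; since $K$ is infinite they vanish identically, and therefore $p[X,Y] \in \overline{K}\cdot I_n$ for every $X,Y \in \mathrm{M}_n(\overline{K})$.

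To contradict this conclusion, I would take $X = e_{12}$ and $Y = t\, e_{21}$ in $\mathrm{M}_n(\overline{K})$ with $t \in \overline{K}$ a free parameter. Because $(XY)^i = t^i e_{11}$ and $(YX)^i = t^i e_{22}$ for all $i \geq 1$, direct computation yields $p[X,Y] = (p(t) - p(0))(e_{11} - e_{22})$. As $p$ is nonconstant and $\overline{K}$ is infinite, one can choose $t$ with $p(t) \neq p(0)$, and $e_{11} - e_{22}$ is non-scalar for $n \geq 2$; this delivers the required contradiction. I expect the hardest part to be the scalar-extension step: one must justify cleanly that the identity ``$p[\,\cdot,\cdot]$ is central'' genuinely transfers from $R$ to $R \otimes_K \overline{K}$. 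The device of the $K$-basis expansion, combined with the infinitude of $K$ from Wedderburn, is what makes this transfer work without invoking any heavier PI-theoretic machinery.
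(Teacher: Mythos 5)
Your argument is correct, but it follows a genuinely different path from the paper's. The paper proves the theorem in two stages: first (Theorem~\ref{ss}) it produces a \emph{single} element $p[\alpha,\beta]$ generating a maximal subfield of $R$, relying on the algebraicity-detecting polynomial $g_n$, Kaplansky's theorem, Amitsur's rational-identity machinery (Lemma~\ref{identity}) and the nonvanishing result Lemma~\ref{l2.3}; then it invokes the Herstein--Ramer theorem (Lemma~\ref{ma}) to conclude that $R$ is generated by $p[\alpha,\beta]$ together with one conjugate, and finishes with the same observation you make that a conjugate of a polynomial commutator is again one. You instead let $A$ be the unital $F$-subalgebra generated by $p[R,R]$, show it is a sub-division ring (finite-dimensional domain over a field) that is stable under all inner automorphisms, and then appeal to Cartan--Brauer--Hua to reduce to ruling out $A\subseteq Z(R)$; that is done by transferring the hypothetical central identity to $\mathrm{M}_n(\overline{K})$ via a Zariski-density argument over the infinite center and exhibiting the explicit noncentral value $p[e_{12},\,t\,e_{21}]=(p(t)-p(0))(e_{11}-e_{22})$. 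Both routes use the same two external pillars at heart, namely the infinitude of the center (Wedderburn's little theorem) and the transfer of identities from $R$ to matrices over an algebraically closed extension, but your proof replaces the $g_n$-plus-Herstein--Ramer machinery with the more elementary Cartan--Brauer--Hua theorem and a direct computation, at the cost of no longer showing, as the paper's argument implicitly does, that $R$ is already generated by a single polynomial commutator and one of its conjugates. One small technical remark: your reduction "the commutative case $R=F$ is trivial" tacitly assumes the convention $F=Z(R)$ (as the paper does); if $F$ were a proper subfield of the center the statement would fail for $R$ a nontrivial field extension of $F$, so it is worth stating that convention explicitly.
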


The proof of Theorem~\ref{gener} is carried out in two main steps. In Step 1, we establish the existence of a maximal subfield of \( R \) that is generated by  $p[R,R]$. Step 2 then invokes a result of I.~N.~Herstein and A.~Ramer, specifically \cite[Corollary 2]{Pa_He_72}.

We begin with Step 1 by recalling that a \emph{maximal subfield} of a division ring \( R \) is a subfield that is not properly contained in any other subfield of \( R \). These subfields are essential in the structural analysis of division rings, and in particular, the finite-dimensionality of \( R \) often reflects that of its maximal subfields. By Zorn’s Lemma, every division ring admits at least one maximal subfield. 

To study such subfields in our context, we adopt a method inspired by the work of \cite{Pa_Aa_18}, which is based on rational identities in central simple algebras. However, we restrict our attention here to the special case of division algebras, rather than working in the more general setting.

Let \( F\langle\mathcal{X}\rangle \) denote the unital associative \( F \)-algebra of noncommutative polynomials in the set $\mathcal{X}=\{X_1, X_2, \ldots\}$ of variables.  A noncommutative polynomial \( f \in F\langle\mathcal{X}\rangle\setminus\{0\}  \) is said to be a \textit{polynomial identity} of a unital associative \( F \)-algebra \( R \) if it evaluates to zero for all permissible substitutions from \( R \).  A central example, which will play a key role in our arguments, is the following. Given a positive integer \( n \) and noncommuting indeterminates \( y_0, y_1, \dots, y_n \), define  
\[
g_n(y_0, y_1, \dots, y_n) = \sum_{\delta \in S_{n+1}} \operatorname{sign}(\delta) \, y_0^{\delta(0)} y_1 y_0^{\delta(1)} y_2 y_0^{\delta(2)}  \dots y_n y_0^{\delta(n)},
\]
where \( S_{n+1} \) denotes the symmetric group on \( \{0, 1, \dots, n\} \) and \( \operatorname{sign}(\delta) \) is the sign of the permutation \( \delta \). This polynomial, introduced in \cite{Bo_Be_96}, encodes algebraicity conditions in terms of identities involving \( n+1 \) noncommuting variables.

We now state the following lemma, which is a direct consequence of \cite[Corollary 2.3.8]{Bo_Be_96}.

\begin{lemma} \label{algebraic}
	Let $R$ be a finite-dimensional division $F$-algebra and let $n,m\geq1$ be integers. For any element \( a \in \mathrm{M}_n(R) \), the following statements are equivalent:
	\begin{enumerate}[\rm (i)]
		\item The element \( a \) is algebraic over the center of $R$  with degree less than or equal to \( m \).
		\item The polynomial identity $g_m(a, r_1, \dots, r_m) = 0$
		holds for all \( r_1, \dots, r_m \in \mathrm{M}_n(R) \).
	\end{enumerate}
\end{lemma}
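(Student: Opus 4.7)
The strategy is to deduce the equivalence directly from the Capelli-type characterization in \cite[Corollary 2.3.8]{Bo_Be_96}. The bulk of the preparatory work reduces to verifying that $\mathrm{M}_n(R)$ fits the hypotheses of that corollary and translating its conclusion into the form stated here. Since $R$ is a finite-dimensional division $F$-algebra, $\mathrm{M}_n(R)$ is a simple Artinian $F$-algebra satisfying a polynomial identity, and its center contains $F$; this is exactly the setup in which $g_n$ functions as a ``generic algebraicity detector.''

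For the direction (i) $\Rightarrow$ (ii), I would exploit the alternating structure of $g_n$. Observe that $g_n(y_0, y_1, \ldots, y_n)$ is an alternating sum over $S_{n+1}$ acting on the exponents of $y_0$: swapping two exponents $\delta(i)$ and $\delta(j)$ sends $\delta$ to a permutation of opposite sign, so upon substitution $y_0 = a$ the expression $g_n(a, r_1, \ldots, r_n)$ becomes a multilinear alternating function of the $(n+1)$-tuple $(1, a, a^2, \ldots, a^n)$. If $a$ is algebraic over $F$ of degree at most $n$, these $n+1$ powers are $F$-linearly dependent, and any alternating multilinear expression in linearly dependent vectors must vanish, irrespective of the chosen $r_i$.

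For the direction (ii) $\Rightarrow$ (i), I would argue by contrapositive: assuming $1, a, a^2, \ldots, a^n$ are $F$-linearly independent, I would construct a tuple $(r_1, \ldots, r_n) \in \mathrm{M}_n(R)^n$ for which $g_n(a, r_1, \ldots, r_n) \neq 0$. This is the Capelli non-vanishing principle and invokes the density theorem for $\mathrm{M}_n(R)$ (which, as a simple Artinian algebra, acts densely on its unique simple module): one selects the $r_i$ to play the role of carefully chosen rank-one operators so as to isolate a single nonzero monomial in the alternating sum, thereby precluding cancellation among the other terms.

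The delicate step is clearly the second direction; the first is essentially a formal consequence of the alternating nature of $g_n$ and poses no conceptual obstacle. Since \cite[Corollary 2.3.8]{Bo_Be_96} already packages the Capelli non-vanishing theorem in precisely the form needed, the actual proof consists mainly of checking that the ambient hypotheses on $\mathrm{M}_n(R)$ are met and reading off the stated equivalence from the cited corollary.
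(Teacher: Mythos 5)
Your proposal matches the paper's approach exactly: the paper gives no explicit proof and simply states that the lemma is a direct consequence of \cite[Corollary 2.3.8]{Bo_Be_96}, which is precisely what you invoke. Your additional sketch of the two directions — the alternating multilinear vanishing for (i) $\Rightarrow$ (ii), and Capelli non-vanishing via density for (ii) $\Rightarrow$ (i) — is a correct account of the mechanism underlying that corollary.
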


Recall that an element \( a \) in a ring \( R \) with center \( Z = Z(R) \) is said to be \textit{algebraic of degree} \( n \) over \( Z \) if there exists a polynomial \( f(x) \in Z[x] \) of degree \( n \) such that \( f(a) = 0 \), and no polynomial of smaller degree satisfies this condition. Note that \( f(x) \) need not be irreducible, even when \( Z \) is a field.

Let \( \mathcal{I}(A) \) denote the set of all  polynomial identities of a central simple algebra \( A \). The following result relates the identity structure of such algebras and follows directly from \cite[Theorem 11]{Pa_Am_66}. It is known from \cite[(15.8) Theorem, p.~242]{Bo_Lam_01} that the dimension of such division rings is always a perfect square. This insight leads us to consider the square root of the dimension as a natural quantity in the statement of the next result.

\begin{lemma} \label{identity}
	Let $R$ be a finite-dimensional division $F$-algebra and let $n=\sqrt{\dim_FR}$. If \( L \) is an extension field of \( F \), then $\mathcal{I}(R) = \mathcal{I}(\mathrm{M}_n(F)) = \mathcal{I}(\mathrm{M}_n(L)).$
\end{lemma}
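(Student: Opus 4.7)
My plan is to reduce all three algebras to $\mathrm{M}_n$ over a common splitting field and then appeal to the invariance of polynomial identities under scalar extension. Since $R$ is a finite-dimensional central simple $F$-algebra of degree $n$ (so $F=Z(R)$ and $\dim_F R=n^2$), the classical structure theory of central simple algebras furnishes a splitting field $K\supseteq F$ such that $R\otimes_F K\cong \mathrm{M}_n(K)$; for concreteness one may take $K=\overline{F}$, the algebraic closure of $F$.

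The technical core is the standard PI-theoretic fact: for any $F$-algebra $A$ and any field extension $K/F$, one has $\mathcal{I}(A)=\mathcal{I}(A\otimes_F K)$. The inclusion $\mathcal{I}(A\otimes_F K)\subseteq\mathcal{I}(A)$ is immediate from the embedding $A\hookrightarrow A\otimes_F K$. The reverse inclusion proceeds by writing a generic substitution in $A\otimes_F K$ as a $K$-linear combination of elements of $A$ relative to a $K$-basis, passing to a multilinear polarization of $f\in F\langle\mathcal{X}\rangle$, and then expanding: the vanishing of $f$ on $A$ forces its vanishing on $A\otimes_F K$. This is essentially the content of \cite[Theorem 11]{Pa_Am_66}, and the multilinearization step causes no trouble in the only nontrivial case $R\neq F$, because then $F$ is infinite by Wedderburn's little theorem.

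Applying this invariance yields the chain
\[
\mathcal{I}(R) \;=\; \mathcal{I}(R\otimes_F K) \;=\; \mathcal{I}(\mathrm{M}_n(K)) \;=\; \mathcal{I}(\mathrm{M}_n(F)\otimes_F K) \;=\; \mathcal{I}(\mathrm{M}_n(F)),
\]
which establishes the first equality. For the second, I would choose any field $E$ containing both $K$ and $L$ (e.g.\ the compositum inside an algebraic closure of $L$, after fixing an embedding of $K$ into it). The same scalar-extension invariance then gives $\mathcal{I}(\mathrm{M}_n(L))=\mathcal{I}(\mathrm{M}_n(L)\otimes_L E)=\mathcal{I}(\mathrm{M}_n(E))$ and analogously $\mathcal{I}(\mathrm{M}_n(F))=\mathcal{I}(\mathrm{M}_n(E))$, closing the chain.

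The main obstacle I anticipate is verifying the scalar-extension invariance cleanly, since a naive proof of $\mathcal{I}(A)=\mathcal{I}(A\otimes_F K)$ can fail when $F$ is finite or of small characteristic. The degenerate case $R=F$ (forcing $n=1$) sidesteps this entirely because all three algebras are then commutative with identical T-ideal generated by $[X_1,X_2]$; in every other case $R$ is a noncommutative division algebra and $F$ is infinite, which is precisely the hypothesis under which \cite[Theorem 11]{Pa_Am_66} applies without modification.
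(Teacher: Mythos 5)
Your approach — passing to a splitting field $K$ with $R\otimes_F K\cong \mathrm{M}_n(K)$ and invoking invariance of the T-ideal of polynomial identities under scalar extension — is precisely what Amitsur's Theorem~11 packages, and that citation is the paper's entire justification for this lemma, so the two routes coincide.

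One substantive slip in the corner case: you claim that when $R=F$ (so $n=1$) the lemma holds because all three algebras are commutative with T-ideal generated by $[X_1,X_2]$. That is false over a finite field. For $F=\mathbb{F}_q$, the algebra $\mathrm{M}_1(F)=F$ satisfies the identity $X^q - X$, which a proper extension $L$ (e.g.\ $\mathbb{F}_{q^2}$) does not, so $\mathcal{I}(\mathrm{M}_1(F))\neq\mathcal{I}(\mathrm{M}_1(L))$ and the lemma as literally stated fails in that degenerate situation. The correct observation is the one you already make elsewhere: the only case the paper actually needs is $R$ noncommutative, where Wedderburn's little theorem forces $F$ infinite, and then both the $n=1$ commutative case and the polarization step behind scalar-extension invariance go through without incident. (Indeed, in Theorem~\ref{ss} the paper disposes of finite $F$ separately before invoking this lemma.) A minor stylistic point: for $\mathcal{I}(\mathrm{M}_n(F))=\mathcal{I}(\mathrm{M}_n(L))$ you do not need a compositum of $K$ and $L$; the single extension $\mathrm{M}_n(F)\otimes_F L\cong\mathrm{M}_n(L)$ together with invariance under $\otimes_F L$ gives it directly.
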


In what follows, we continue with the following lemma that ensures applying $g_n$ to a nonzero polynomial.

\begin{lemma} \label{l2.3}
	Let $F$ be an infinite field, and let \( p(x) \in F[x] \) be a nonconstant polynomial. Then,  $g_n(p(x_1x_2) - p(x_2x_1), x_3, \dots, x_{n+2})$	is a nonzero polynomial in the noncommuting variables \( x_1, \dots, x_{n+2} \).
\end{lemma}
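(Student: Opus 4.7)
The plan is to establish the nonvanishing of $h(x_1,\ldots,x_{n+2}) := g_n\bigl(p(x_1 x_2)-p(x_2 x_1),\,x_3,\ldots,x_{n+2}\bigr)$ in the free algebra $F\langle x_1,\ldots,x_{n+2}\rangle$ by exhibiting a single $F$-algebra in which some substitution of the variables produces a nonzero element. This reduction is legitimate because a noncommutative polynomial over $F$ is zero in the free algebra if and only if it vanishes identically on every $F$-algebra, so producing a single nonvanishing evaluation suffices.

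The target algebra will be $M_n(D)$, to which Lemma~\ref{algebraic} applies. That lemma asserts that, for $y\in M_n(D)$, the expression $g_n(y,r_1,\ldots,r_n)$ vanishes for every choice of $r_1,\ldots,r_n\in M_n(D)$ precisely when $y$ is algebraic over $F$ of degree at most $n$. Accordingly, I need only produce a pair $a,b\in M_n(D)$ such that $y_0 := p(ab)-p(ba)$ has minimal polynomial of degree strictly greater than $n$ over $F$. Given such $a,b$, some tuple $(r_1,\ldots,r_n)\in M_n(D)^n$ will satisfy $g_n(y_0,r_1,\ldots,r_n)\neq 0$, yielding the required nonzero evaluation of $h$ and hence the conclusion.

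To construct such $a,b$, I would exploit that $D$ is noncommutative. Setting $m := \sqrt{\dim_F D}\geq 2$, the central simple $F$-algebra $M_n(D)$ has PI-degree $nm\geq 2n$, so its elements can be algebraic over $F$ of degree up to $nm > n$. One concrete route is to invoke Theorem~\ref{division1} to pick $\alpha,\beta\in D$ with $p(\alpha\beta)-p(\beta\alpha)\neq 0$, then lift them to matrices $a,b\in M_n(D)$ by combining scalar blocks with a shift or companion-matrix construction, forcing $p(ab)-p(ba)$ to acquire sufficiently many distinct reduced eigenvalues. A cleaner alternative is a genericity argument: over a sufficiently large scalar extension $L\supseteq F$, the set of pairs $(a,b)\in M_n(D\otimes_F L)^2$ for which $p(ab)-p(ba)$ has algebraic degree at most $n$ over $L$ is a proper Zariski-closed subvariety, so a generic pair does the job, and a specialization argument descends the conclusion to $M_n(D)$ itself. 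The principal obstacle is precisely this last verification: showing that the polynomial map $(a,b)\mapsto p(ab)-p(ba)$ on $M_n(D)^2$ is not confined to the locus of elements of algebraic degree $\leq n$. Once this genericity is pinned down, the application of Lemma~\ref{algebraic} is mechanical and the nonvanishing of $h$ in the free algebra follows.
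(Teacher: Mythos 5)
Your proposal correctly identifies the shape of the argument: it suffices to exhibit a single substitution in some $F$-algebra at which $g_n\bigl(p(x_1x_2)-p(x_2x_1),x_3,\dots,x_{n+2}\bigr)$ is nonzero, and by Lemma~\ref{algebraic} this amounts to producing $a,b$ for which $p(ab)-p(ba)$ fails to be algebraic of degree at most $n$ over $F$. However, the step you flag as the ``principal obstacle'' is in fact the entire content of the lemma, and your sketch does not close it. Neither the block-lifting route nor the genericity/Zariski route is carried out, and the genericity claim (that the locus where $p(ab)-p(ba)$ has algebraic degree at most $n$ is a proper closed subvariety) is exactly the thing to be proved, not something that can be assumed.

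There is a second, structural problem with working in $\mathrm{M}_n(D)$. The lemma's hypotheses do not require $D$ to be noncommutative or finite-dimensional; your computation $m=\sqrt{\dim_F D}\ge 2$ silently assumes both. If $D=F$ is commutative, then for any $a,b\in\mathrm{M}_n(F)$ the matrix $p(ab)-p(ba)$ satisfies its characteristic polynomial of degree $n$, so every value of $p(ab)-p(ba)$ has algebraic degree at most $n$ and your target algebra cannot furnish the needed witness. Even when $D$ is a genuine noncommutative finite-dimensional algebra, the PI-degree bound $nm$ only says the degree \emph{could} be as large as $nm$; it does not by itself produce a pair $a,b$ realizing algebraic degree greater than $n$.

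The paper avoids both difficulties with a different move: by \cite[(14.21)]{Bo_Lam_01} one embeds $D$ into a division ring $D_1$ with the same center $F$ and $\dim_F D_1=\infty$, and then invokes \cite[Theorem 1.2]{Pa_Hai.Dung.Bien_2022}, which says that if \emph{every} value $p(a_1a_2)-p(a_2a_1)$ with $a_1,a_2\in D_1$ were algebraic over $F$ of degree at most $n$, then $D_1$ would be finite-dimensional — a contradiction. Hence some $p(a_1a_2)-p(a_2a_1)$ is transcendental or of algebraic degree exceeding $n$, and Lemma~\ref{algebraic} finishes. This sidesteps any explicit construction or genericity argument, works uniformly regardless of whether $D$ itself is commutative or finite-dimensional, and is the ingredient your proposal is missing.
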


\begin{proof}
	First, we construct a division ring $D$ such that $g_n$ has a nonzero value over $D$. Let $G$ be a free group generated by infinitely many independent indeterminates $\{t_i: i \in I\}$. By \cite[Theorem (14.21)]{Bo_Lam_01}, the Mal'cev-Neumann ring $D = F((G))$ is a division ring. The infinite set $\{t_i: i \in I\}$ is linearly independent over the center $Z(D)$ of $D$. Since $F \subseteq Z(D)$, it follows that  $\dim_F D = \infty$. Next, suppose, for the sake of contradiction, that for all \( a_1, a_2 \in D \), the element \( p(a_1a_2) - p(a_2a_1) \) is algebraic over \( F \) of degree at most \( n \). Then, taking \cite[Theorem 1.2]{Pa_Hai.Dung.Bien_2022} into account, it would follow that $\dim_F D < \infty$, contradicting the fact that $\dim_F D = \infty$.	Therefore, there exist elements \( a_1, a_2 \in D \) such that \( p(a_1a_2) - p(a_2a_1) \) is either transcendental over \( F \) or algebraic of degree strictly greater than \( n \). It follows from Lemma~\ref{algebraic}  that
	$g_n(p(a_1a_2) - p(a_2a_1), a_3, \dots, a_{n+2}) \ne 0$
	for some \( a_3, \dots, a_{n+2} \in D \). Hence, the polynomial \( g_n(p(x_1x_2) - p(x_2x_1), x_3, \dots, x_{n+2}) \) must be nonzero. This completes the proof.
\end{proof}

To explore the existence of maximal subfields within division rings, we rely on the following lemma, which is an immediate consequence of \cite[Corollary 15.6 and Proposition~15.7]{Bo_Lam_01}.

\begin{lemma}  \label{maximal}
	Let $D$ be a finite-dimensional division $F$-algebra and let $n=\sqrt{\dim_FD}$. If \( K \) is a subfield of \( D \) containing \( F \), then \( \dim_F K \leq n \), and equality holds if and only if \( K \) is a maximal subfield of \( D \).  
\end{lemma}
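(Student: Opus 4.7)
The plan is to derive the statement from the Double Centralizer Theorem applied to $R$ over its center $F$, using the fact that $\dim_F R = n^2$ is a perfect square (which underlies the very definition of $n$ here). This is the standard route taken in \cite[$\S 15$]{Bo_Lam_01}, and I would follow it in two stages: first prove $\dim_F K \le n$ for every subfield $K\supseteq F$, and then characterize equality via the self-centralizing property of maximal subfields.

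For the first stage, I would fix a subfield $K$ of $R$ with $F\subseteq K$, and form its centralizer $C_R(K)=\{r\in R \mid rk=kr\text{ for all } k\in K\}$. Since $K$ is commutative, one has $K\subseteq C_R(K)$. The Double Centralizer Theorem for central simple algebras (which applies because $R$, being a finite-dimensional division $F$-algebra with center $F$, is central simple over $F$) gives the identity
\[
\dim_F K \cdot \dim_F C_R(K) = \dim_F R = n^2.
\]
Combined with $K\subseteq C_R(K)$, this forces $(\dim_F K)^2 \le n^2$, i.e., $\dim_F K \le n$.

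For the second stage, I would argue the two implications of the equivalence separately. If $\dim_F K = n$, then the displayed identity forces $\dim_F C_R(K) = n = \dim_F K$, and combined with the inclusion $K\subseteq C_R(K)$ this yields $K = C_R(K)$. Any subfield $K'\supseteq K$ of $R$ must lie in $C_R(K)=K$, so $K'=K$ and $K$ is maximal. Conversely, if $K$ is maximal, one first observes that $C_R(K)$ is itself a division ring whose center contains $K$; any element $c\in C_R(K)\setminus K$ would generate (together with $K$) a commutative subring which, being a finite-dimensional integral domain over a field, is itself a field strictly containing $K$, contradicting maximality. Hence $C_R(K)=K$, and the identity above now gives $(\dim_F K)^2 = n^2$, so $\dim_F K = n$.

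The only genuinely delicate point is the verification that $C_R(K)$ is self-contained as a division ring and the correct invocation of the Double Centralizer Theorem in the central simple setting; both are standard but require $R$ to be central simple over $F$, which is automatic here since $R$ is a finite-dimensional division $F$-algebra viewed over its center. Once these ingredients are in place, the lemma follows at once, and in particular matches the statements \cite[Corollary 15.6 and Proposition 15.7]{Bo_Lam_01} cited in the excerpt.
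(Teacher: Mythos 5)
Your proof is correct and takes essentially the same route as the paper: the paper simply cites \cite[Corollary 15.6 and Proposition 15.7]{Bo_Lam_01}, and those results are themselves established via the Double Centralizer Theorem for central simple algebras, which is precisely the argument you have unpacked. Your treatment of the converse (showing $C_R(K)=K$ for a maximal subfield $K$ by noting that any $c\in C_R(K)\setminus K$ would give a strictly larger subfield $K[c]$) is the standard and correct way to close the loop.
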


We continue to present the following lemma that identifies algebraic elements whose degree over the center equals $\sqrt{\dim_FD}$. The proof follows from \cite[Lemma 5]{Pa_Aa_18}, with a slight modification.

\begin{lemma}\label{element}
	Let \( F \) be an infinite field and let \( n \geq 2 \) be an integer. If \( p \in F[x] \) is a nonconstant polynomial, then there exist matrices \( A, B \in \mathrm{M}_n(F) \) such that \( p[A,B] \) is an algebraic element of degree \( n \) over \( F \).
\end{lemma}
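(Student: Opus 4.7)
The plan is to construct $A, B \in \mathrm{M}_n(F)$ explicitly, with a handful of free parameters in $F$, and then tune the parameters so that $p[A, B]$ is either diagonal with pairwise distinct entries, or (in a single residual subcase) a $2 \times 2$ Jordan block -- either way an algebraic element of degree exactly $n$. The construction is
\[
A := \sum_{i=1}^{n-1} E_{i, i+1}, \qquad B := b\, E_{11} + \sum_{i=1}^{n-1} c_i\, E_{i+1, i},
\]
with parameters $b, c_1, \ldots, c_{n-1} \in F$ to be chosen. A direct calculation gives $AB = \operatorname{diag}(c_1, \ldots, c_{n-1}, 0)$ and $BA = \operatorname{diag}(0, c_1, \ldots, c_{n-1}) + b\, E_{12}$; the identities $E_{12}^2 = 0$, $E_{12}\cdot\operatorname{diag}(0, c_1, \ldots) = c_1 E_{12}$, and $\operatorname{diag}(0, c_1, \ldots)\cdot E_{12} = 0$ then permit an easy induction reducing $p(BA)$ to the sum of a diagonal matrix and a single multiple of $E_{12}$. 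The net outcome is
\[
p[A, B] = \operatorname{diag}(d_1, d_2, \ldots, d_n) - b\, \tilde p(c_1)\, E_{12},
\]
where $d_1 = p(c_1) - p(0)$, $d_i = p(c_i) - p(c_{i-1})$ for $2 \leq i \leq n - 1$, $d_n = p(0) - p(c_{n-1})$, and $\tilde p(x) := (p(x) - p(0))/x$ is a nonzero polynomial in $F[x]$ because $p$ is nonconstant.

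Setting $q(x) := p(x) - p(0)$, each $d_i$ is an $F$-linear combination of $q(c_1), \ldots, q(c_{n-1})$ with coefficient vector $(1, 0, \ldots, 0)$ for $i = 1$, $e_i - e_{i-1}$ for $2 \leq i \leq n - 1$, and $(0, \ldots, 0, -1)$ for $i = n$; these coefficient vectors are pairwise distinct in $F^{n-1}$ whenever $n \geq 3$, and also when $n = 2$ and $\operatorname{char}(F) \neq 2$. Because $q(c_1), \ldots, q(c_{n-1})$ involve pairwise disjoint indeterminates and $q \neq 0$, they are $F$-linearly independent in $F[c_1, \ldots, c_{n-1}]$; hence every $d_i - d_j$ (for $i \neq j$) is a nonzero polynomial in the $c_k$'s. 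A standard Zariski-density argument over the infinite field $F$ then produces $(c_1, \ldots, c_{n-1}) \in F^{n-1}$ making all $d_i$ distinct, and taking $b = 0$ renders $p[A, B]$ diagonal with $n$ distinct entries, hence algebraic of degree $n$.

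The residual corner is $n = 2$ with $\operatorname{char}(F) = 2$, where $d_1 = d_2 = q(c_1)$ unavoidably. Here the off-diagonal parameter $b$ comes to the rescue: choose $b \neq 0$ and $c_1 \in F$ with $\tilde p(c_1) \neq 0$ (both possible because $F$ is infinite and $\tilde p \neq 0$), so that $p[A, B]$ becomes a $2 \times 2$ Jordan block with eigenvalue $q(c_1)$, of minimal polynomial $(x - q(c_1))^2$, and hence of degree $2 = n$. The main subtlety of the proof is simply recognizing the degenerate coincidence of the $d_i$'s in this single characteristic-$2$, dimension-$2$ case and introducing the auxiliary parameter $b$ to restore a Jordan block; everywhere else the argument reduces to the elementary distinctness check above.
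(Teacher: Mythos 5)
Your proof is correct, and it takes a genuinely different route from the paper's. The paper's argument is short but not self-contained at this point: it takes the nilpotent shift matrix $T=\sum_{i<n}E_{i,i+1}$, observes it is noncentral, traceless, and algebraic of degree $n$ (minimal polynomial $x^n$), and then invokes the statement that every noncentral traceless matrix over an infinite field lies in $p[\mathrm{M}_n(F),\mathrm{M}_n(F)]$ -- a fact quoted from Laffey--West in characteristic $0$ and only proved for positive characteristic later in the paper (Corollary~\ref{pooo}, via Theorem~\ref{zero}), creating a forward reference. Your proof instead builds $A$ and $B$ by hand: with $A=\sum_{i<n}E_{i,i+1}$ and $B=bE_{11}+\sum_{i<n}c_iE_{i+1,i}$, the matrices $AB$ and $BA$ are explicit, and the nilpotent identities $E_{12}^2=0$, $DE_{12}=0$, $E_{12}D=c_1E_{12}$ collapse $p(BA)$ to $p(D)+b\,\tilde p(c_1)E_{12}$, so $p[A,B]$ is diagonal plus a single off-diagonal term. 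Then the linear independence of $q(c_1),\dots,q(c_{n-1})$ (disjoint indeterminates, $q\neq0$, zero constant term) combined with the pairwise distinctness of the coefficient vectors gives nonvanishing of each $d_i-d_j$, and Zariski density over the infinite $F$ finishes the generic case with $b=0$; the one genuine degeneracy -- $n=2$ in characteristic $2$, where the two coefficient vectors $(1)$ and $(-1)$ coincide -- is handled by switching $b$ on to produce a nontrivial $2\times2$ Jordan block. What your approach buys is full self-containment at the cost of a slightly longer computation; what the paper's approach buys is brevity at the cost of leaning on a later corollary. Both are valid, and your explicit construction is arguably cleaner pedagogically since it requires no machinery beyond the bare definitions.
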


\begin{proof}
	It is shown in \cite[Theorem]{Pa_LaWe_93} that any noncentral matrix in $\mathrm{M}_n(F)$ with trace zero can be expressed as a polynomial commutator in $\mathrm{M}_n(F)$ relative to \(p\). Although \cite[Theorem]{Pa_LaWe_93} was originally stated for fields of characteristic 0, the result also holds for noncentral matrices over fields with positive characteristic. A proof for this case will be provided in Corollary~\ref{pooo}.
	
	Consider a noncentral \( n \times n \) matrix \( T = (t_{ij}) \in \mathrm{M}_n(F) \), where
	\[
	t_{ij} = 
	\begin{cases} 
		1, & \text{if } j = i + 1, \\
		0, & \text{otherwise}.
	\end{cases}
	\]The matrix \( T \) has trace zero and is algebraic of degree \( n \). This completes the proof.
\end{proof}

Having established the necessary groundwork, we are now prepared to present the following theorem, which confirms the existence of a maximal subfield and thereby completes Step 1 as anticipated.

\begin{theorem}\label{ss}
	Let \( R \) be a noncommutative finite-dimensional  division \( F \)-algebra, and let \( p \in F[x] \) be a nonconstant polynomial. Then, there exist elements \( a, b \in R \) such that the  subdivision $F$-algebra of \( R \) generated by \( p[a,b] \) is a maximal subfield of \( R \).
\end{theorem}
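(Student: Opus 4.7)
The plan is to reduce the theorem to the existence of $a,b\in R$ with $p[a,b]$ of algebraic degree exactly $n:=\sqrt{\dim_FR}$ over $F$. Indeed, once such a pair is found, Lemma~\ref{maximal} guarantees that $F(p[a,b])$ is a maximal subfield of $R$, and the centrality of $F$ together with the finite-dimensionality of $R$ forces $F(p[a,b])$ to coincide with the intersection of all subdivision rings of $R$ containing $F$ and $p[a,b]$. One may assume $R$ is noncommutative, so $n\geq 2$ and, by Wedderburn's little theorem, $F$ is infinite; this makes Lemma~\ref{element} available.

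The engine of the proof is the noncommutative polynomial
\[
f(x_1,\ldots,x_{n+1}) \;=\; g_{n-1}\bigl(p(x_1x_2)-p(x_2x_1),\;x_3,\ldots,x_{n+1}\bigr),
\]
which, by the same argument as in Lemma~\ref{l2.3} with $n-1$ in place of $n$, is a nonzero element of the free algebra $F\langle x_1,\ldots,x_{n+1}\rangle$. The first step is to show that $f$ is not a polynomial identity of $\mathrm{M}_n(F)$: Lemma~\ref{element} supplies $A,B\in\mathrm{M}_n(F)$ with $Y:=p[A,B]$ algebraic of degree $n$ over $F$, and the central-simple-algebra form of Lemma~\ref{algebraic} (a direct consequence of the cited Bergman material) yields $r_3,\ldots,r_{n+1}\in\mathrm{M}_n(F)$ with $g_{n-1}(Y,r_3,\ldots,r_{n+1})\neq 0$. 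Lemma~\ref{identity} then transports this failure to $R$, producing $\alpha_1,\ldots,\alpha_{n+1}\in R$ with $g_{n-1}(p[\alpha_1,\alpha_2],\alpha_3,\ldots,\alpha_{n+1})\neq 0$. Embedding $\alpha_3,\ldots,\alpha_{n+1}$ as scalar matrices in $\mathrm{M}_{n-1}(R)$ and invoking Lemma~\ref{algebraic} in its original form forces $p[\alpha_1,\alpha_2]$ to have algebraic degree strictly greater than $n-1$ over $F$; combined with the upper bound from Lemma~\ref{maximal}, this degree must equal $n$. Taking $a=\alpha_1$ and $b=\alpha_2$ completes the argument.

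The most delicate step is the crossover between the division-algebra version of Lemma~\ref{algebraic} (used to deduce the degree bound in $R$) and the matrix-algebra version needed over $\mathrm{M}_n(F)$ (used to show that $f$ is not a polynomial identity there). The cleanest way to bridge the two is to note that $R$ and $\mathrm{M}_n(F)$ are central simple $F$-algebras of the same degree $n$, so by Lemma~\ref{identity} their polynomial identities coincide and the Bergman--Amitsur machinery applies uniformly; the remaining steps, namely descending through the non-identity and identifying $F(p[a,b])$ with the desired intersection of subdivision rings, are then straightforward.
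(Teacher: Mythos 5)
Your proof is correct, and it establishes the same key claim as the paper's proof—namely, the existence of $a,b\in R$ with $p[a,b]$ algebraic of degree exactly $n=\sqrt{\dim_FR}$—using the same toolkit (Lemmas~\ref{algebraic}, \ref{identity}, \ref{l2.3}, \ref{element}, and \ref{maximal}), but organizes the argument contrapositively. The paper introduces the auxiliary maximum $\ell=\max\{\dim_F F(p[a,b])\mid a,b\in R\}$, observes via Lemma~\ref{algebraic} that $g_\ell(p[x_{\ell+1},x_{\ell+2}],x_1,\ldots,x_\ell)$ is then a nonzero polynomial identity of $R$, transfers it through Lemma~\ref{identity} to $\mathrm{M}_n(F)$, and finally plays this off against Lemma~\ref{element} to force $\ell\geq n$. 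You instead fix $g_{n-1}(p[x_1,x_2],x_3,\ldots,x_{n+1})$ directly, use Lemma~\ref{element} together with Lemma~\ref{algebraic} to exhibit a nonvanishing evaluation over $\mathrm{M}_n(F)$, transfer the resulting \emph{non}-identity through Lemma~\ref{identity} back to $R$, and thereby obtain the required pair $(a,b)$ explicitly. The effect is a slightly more streamlined constructive rendition of the same argument: you bypass the quantity $\ell$, and your appeal to Lemma~\ref{l2.3} is in fact redundant, since the nonvanishing evaluation over $\mathrm{M}_n(F)$ already certifies that the polynomial is nonzero in the free algebra. One caveat you flag explicitly, and which the paper's own proof passes over silently, is that both arguments apply Lemma~\ref{algebraic} with a degree parameter that does not coincide with the matrix size appearing in the lemma's literal statement; what is really invoked in both proofs is the general prime-ring (central simple algebra) form of the Beidar--Martindale--Mikhalev criterion from \cite[Corollary 2.3.8]{Bo_Be_96}, and your scalar-matrix embedding into $\mathrm{M}_{n-1}(R)$ is a clean way to reconcile at least half of those applications with the stated form.
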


\begin{proof}
	It is known from Wedderburn’s ``Little'' Theorem~\cite[(13.1), Chapter~5, \S13, p.~203]{Bo_Lam_01} that the center of a noncommutative finite-dimensional division algebra must be infinite, so  \(F\) is infinite. Let \( n = \sqrt{\dim_F R} \). For any elements $a,b\in R$, we write  \( F(p[a,b]) \) to denote the intersection of all subdivision rings of \( R \) containing both \( F \) and \( p[a,b] \).  It is straightforward to verify that this intersection forms a field. By Lemma~\ref{maximal}, it suffices to find elements \( a, b \in R \) such that  $\dim_F F(p[a,b]) \geq n$.  Define  $$\ell = \max \{ \dim_F F(p[a,b]) \mid a, b \in R \}.$$ 	Then, by Lemma~\ref{algebraic}, for any \( a, b \in R \) and all \( r_1, \dots, r_\ell \in R \), we have $$g_\ell(p[a,b], r_1, \dots, r_\ell) = 0.$$  		Hence, the polynomial  $g_\ell(p[x_{\ell+1},x_{\ell+2}], x_1, \dots, x_\ell)$ 
	is a polynomial identity of \( R \) in noncommuting variables \( x_1, \dots, x_{\ell+2} \). Taking Lemma~\ref{l2.3} into account, this polynomial is nonzero. By Lemma~\ref{identity}, this identity also holds in the matrix ring \( \mathrm{M}_n(F) \). Hence, it follows that $g_\ell(p[A,B], A_1, \dots, A_\ell) = 0$ for all $A,B,A_1, \dots, A_\ell \in \mathrm{M}_n(F)$. Moreover, by Lemma~\ref{algebraic}, each element of the form \( p[A,B] \), with \( A, B \in \mathrm{M}_n(F) \), is algebraic over \( F \) with degree at most \( \ell \).  On the other hand, by Lemma~\ref{element}, we can find matrices \( A, B \in \mathrm{M}_n(F) \) such that \( p[A,B] \) is algebraic over \( F \) with degree \( n \). Therefore, $n\leq\ell$, as claimed.
\end{proof}

We now make use of a result due to I. N. Herstein and A. Ramer, presented in \cite[Corollary~2]{Pa_He_72}, and stated below for convenience.

\begin{lemma}\label{ma}
	Let \( R \) be a finite-dimensional division \( F \)-algebra. If \( K \) is a maximal subfield of \( R \) generated by some element \( \alpha \in R \), then \( R \) is generated as an \( F \)-algebra by \( \alpha \) and a conjugate of \( \alpha \).
\end{lemma}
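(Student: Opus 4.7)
The plan is to pick $x \in R^\times$ for which the $F$-linear multiplication map
\[
\mu_x \colon K \otimes_F K \to R, \qquad k_1 \otimes k_2 \mapsto k_1 \cdot (x k_2 x^{-1}),
\]
is a bijection. If so, the $F$-subalgebra of $R$ generated by $\alpha$ and $x\alpha x^{-1}$ contains $K = F[\alpha]$ and $xKx^{-1} = F[x\alpha x^{-1}]$, hence contains $\mathrm{Im}(\mu_x) = R$, proving the lemma. To begin, set $n = \sqrt{\dim_F R}$, so $\dim_F K = n$ by Lemma~\ref{maximal}. If $F$ is finite, then $R$ is a finite division ring, hence commutative by Wedderburn's Little Theorem, so $R = K$ and the conclusion is trivial; I therefore assume $F$ is infinite.

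Since $K \otimes_F K$ and $R$ both have $F$-dimension $n^2$, the map $\mu_x$ is a bijection iff its determinant $\det(\mu_x)$ in any fixed $F$-bases is nonzero. Writing $x^{-1}$ as $\mathrm{Nrd}(x)^{-1}\mathrm{adj}(x)$ via the reduced norm and adjugate, one obtains a polynomial $\Phi$ over $F$ in the coordinates of $x$ (with respect to a chosen $F$-basis of $R$) and an integer $m \geq 0$ such that $\mathrm{Nrd}(x)^m \det(\mu_x) = \Phi(x)$. Hence, for $x \in R^\times$, bijectivity of $\mu_x$ is equivalent to $\Phi(x) \neq 0$. Since $F$ is infinite, the problem reduces to showing that $\Phi$ is not identically zero as a polynomial.

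To verify $\Phi \not\equiv 0$, I plan to extend scalars to a splitting field $\tilde{F}$ of $R$, identifying $R \otimes_F \tilde{F}$ with $\mathrm{M}_n(\tilde{F})$. Since $\Phi$ has $F$-coefficients, it suffices to exhibit one $X \in \mathrm{GL}_n(\tilde{F})$ with $\Phi(X) \neq 0$. Under this identification, $K \otimes_F \tilde{F}$ embeds as a maximal commutative $\tilde{F}$-subalgebra of $\mathrm{M}_n(\tilde{F})$. In the separable case---in particular in characteristic zero---this subalgebra becomes the diagonal matrices in a suitable basis of $\tilde{F}^n$, and a direct rank-one calculation shows that any $X$ with every entry nonzero makes $\mu_X$ bijective, since its image then contains a spanning family of matrices parameterized by pairs of diagonal units. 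The main obstacle I expect is the inseparable case, where $K \otimes_F \tilde{F}$ is non-semisimple and no diagonalization is available; there I intend to replace the diagonal basis with a cyclic-vector basis adapted to $K \otimes_F \tilde{F}$ and perform an analogous computation, using that $K \otimes_F \tilde{F}$ is still $n$-dimensional and contains a cyclic generator coming from $\alpha$, to show that a generic $X \in \mathrm{GL}_n(\tilde{F})$ still yields bijective $\mu_X$. With $\Phi \not\equiv 0$ secured, infinitude of $F$ produces the desired $x \in R^\times$, and $\alpha$ together with the conjugate $x\alpha x^{-1}$ generates $R$ as an $F$-algebra.
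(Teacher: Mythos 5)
Your strategy is genuinely different from the paper's treatment, because the paper does not prove this lemma at all: it simply quotes it as Corollary~2 of Herstein--Ramer. Your reduction is correct as far as it goes: if $\mu_x$ is surjective for some $x\in R^\times$, then the $F$-subalgebra generated by $\alpha$ and $x\alpha x^{-1}$ contains $K\cdot(xKx^{-1})=R$; clearing $x^{-1}$ by the reduced adjugate to get a polynomial $\Phi$ with $\mathrm{Nrd}(x)^m\det(\mu_x)=\Phi(x)$, reducing (via infinitude of $F$, and multiplying by $\mathrm{Nrd}$ to keep $x$ invertible) to exhibiting one good point over a splitting field, the disposal of finite $F$ by Wedderburn, and the separable computation --- diagonal image of $K\otimes_F\tilde F$, image of $\mu_X$ spanned by the matrices $x_{ij}E_{ij}$ --- are all fine.

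The genuine gap is the inseparable case, which your proposal explicitly leaves as a plan, and which in positive characteristic cannot be excluded (a maximal subfield $F[\alpha]$ of $R$ need not be separable over $F$). This is not a routine adaptation of the diagonal computation: once $K\otimes_F\tilde F$ is non-reduced, the assertion that a generic $X$ makes $\mu_X$ bijective does not follow from dimension counting and faithfulness alone. For instance, $C=\mathrm{span}\{I,E_{12},E_{13}\}\subseteq\mathrm{M}_3(\tilde F)$ is a faithful commutative unital subalgebra of dimension $3$, yet all products $E_{1j}XE_{1k}$ lie in $\mathrm{span}\{E_{12},E_{13}\}$, so $CXC\neq\mathrm{M}_3(\tilde F)$ for every $X$; hence some structural property of $K\otimes_F\tilde F$ beyond ``commutative of dimension $n$'' must be used. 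The missing ingredient is precisely the hypothesis you mention but do not exploit: $K=F[\alpha]$ is monogenic, so $C:=K\otimes_F\tilde F\cong\tilde F[t]/(m(t))$ with $m$ of degree $n$, i.e.\ the image of $\alpha$ in $\mathrm{M}_n(\tilde F)$ is a nonderogatory matrix and $V=\tilde F^n$ is a cyclic $C$-module, $V\cong C$. Since $C$ is a quotient of $\tilde F[t]$ by a principal ideal, it is a Frobenius algebra, so $V^*\cong C^*\cong C$ as $C$-modules, and therefore $\mathrm{M}_n(\tilde F)\cong V\otimes_{\tilde F}V^*\cong C\otimes_{\tilde F}C$ as a module over $C\otimes_{\tilde F}C$; it is thus free of rank one, and any generator is an invertible-after-perturbation point $X$ with $\mu_X$ (equivalently $\nu_X\colon c_1\otimes c_2\mapsto c_1Xc_2$) bijective. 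Until an argument of this kind is actually written out, your proof covers only the separable case, so as it stands the proposal is incomplete, though the route is viable and your instinct to use the cyclic generator coming from $\alpha$ is exactly what completes it.
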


With Lemma~\ref{ma} and the preceding tools in hand, we are ready to prove Theorem~\ref{gener}.

\begin{proof}[The proof of Theorem~\ref{gener}]
	By Theorem~\ref{ss}, there exist elements \(\alpha, \beta \in R\) such that the field generated by \(p[\alpha,\beta]\) over \(F\), denoted \(F(p[\alpha,\beta])\), is a maximal subfield of \(D\). Note that this field is the intersection of all subdivision rings of \(R\) that contain both \(F\) and the element \(p[\alpha,\beta]\).  Applying Lemma~\ref{ma}, we deduce that \(R\) is generated as an \(F\)-algebra by \(p[\alpha,\beta]\) and one of its conjugates. Furthermore, this conjugate is also a polynomial commutator. Specifically, if the conjugate is given by \(\gamma p[\alpha,\beta] \gamma^{-1}\) for some \(\gamma \in R\setminus\{0\}\), then we can express it as
	$\gamma p[\alpha,\beta] \gamma^{-1}= p[\gamma\alpha\gamma^{-1},\gamma\beta\gamma^{-1}],$ which is again of the same polynomial commutator form. This completes the proof.
\end{proof}

As mentioned in Section~\ref{intro}, we now continue with an affirmative answer to the final part of Question~\ref{commutator ideal}. Our focus is on a concrete example: the division algebra of real quaternions. As a first step, we describe the set \( p[\mathbb{H}, \mathbb{H}] \).

Let \( R = \mathbb{H} \), the real quaternion algebra with standard basis \( \{1, i, j, k\} \) and the following multiplication rules: $ i^2 = j^2 = k^2 = ijk = -1.$ Any element \( q \in \mathbb{H} \) can be written as \( q = a + bi + cj + dk \), where \( a, b, c, d \in \mathbb{R} \), with \( \Re(q) = a \) representing the real part and \( \Im(q) = bi + cj + dk \) representing the imaginary part. A quaternion \( q \) is called \emph{purely imaginary} if \( \Re(q) = 0 \), i.e., \( q = bi + cj + dk \). For any quaternion \( q = a + bi + cj + dk \), where \( a, b, c, d \in \mathbb{R} \), 
\begin{enumerate}[\rm (i)]
	\item the conjugate of \( q \), denoted \( \overline{q} \), is given by
	$\overline{q} = a - bi - cj - dk$;
	\item the norm of $q$, denoted $\|q\|$, is defined as $\|q\|=\sqrt{q\overline{q}}$.
\end{enumerate}
With the quaternionic framework in place, we now state the following theorem, which describes the set of all polynomial commutators associated to a given polynomial $p$ in $\mathbb{H}$.

\begin{theorem}\label{qua}
	If \(p\in\mathbb{R}[x]\) is any nonconstant real polynomial in the quaternionic variable $x$, then 
	\(p[\mathbb{H},\mathbb{H}]=\{bi + cj + dk \mid b,c,d\in\mathbb R\}\).
\end{theorem}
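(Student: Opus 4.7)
I would prove the two inclusions separately.

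For the forward inclusion $p[\mathbb{H},\mathbb{H}] \subseteq \{bi+cj+dk \mid b,c,d\in\mathbb{R}\}$, I would start from the expansion~\eqref{sub}, which reduces the task to showing that each summand $(ab)^i - (ba)^i$ has zero real part. Using the elementary identity $(ba)^i = b(ab)^{i-1}a$ together with the cyclic property $\Re(xy)=\Re(yx)$ of the real part in $\mathbb{H}$, one sees that
\[
\Re\bigl((ba)^i\bigr)=\Re\bigl(b(ab)^{i-1}a\bigr)=\Re\bigl(a\cdot b(ab)^{i-1}\bigr)=\Re\bigl((ab)^i\bigr),
\]
so $(ab)^i - (ba)^i$ is purely imaginary, and hence so is $p[a,b]$ for every $a,b\in\mathbb{H}$.

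For the reverse inclusion, my idea is to make a parametric choice of $a,b$ that collapses the quaternionic problem into a complex-analytic one. I would take $a = i$ and $b = ri + tj$ with real parameters $r,t$. A direct multiplication gives $ab = -r + tk$ and $ba = -r - tk = \overline{ab}$. Since $p$ has real coefficients, this yields
\[
p(ab) - p(ba) = p(ab) - \overline{p(ab)} = 2\,\Im\bigl(p(ab)\bigr).
\]
Moreover, the subring $\mathbb{R}\oplus\mathbb{R}k \subset \mathbb{H}$ is isomorphic as an $\mathbb{R}$-algebra to $\mathbb{C}$ via $x + yk \mapsto x + yi$; hence evaluating $p$ at $-r+tk$ corresponds to evaluating $p$ at $-r+ti \in \mathbb{C}$, and the quaternionic imaginary part $\Im(p(ab))$ equals $\Im_{\mathbb{C}}\bigl(p(-r+ti)\bigr)\,k$. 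Since $p$ is a nonconstant polynomial, the fundamental theorem of algebra says $p:\mathbb{C}\to\mathbb{C}$ is surjective, so $\Im_{\mathbb{C}}\bigl(p(-r+ti)\bigr)$ attains every real value as $(r,t)$ varies over $\mathbb{R}^2$. This shows that every real multiple of $k$ belongs to $p[\mathbb{H},\mathbb{H}]$.

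To pass from multiples of $k$ to all purely imaginary quaternions, I would use the conjugation equivariance $p[uau^{-1}, ubu^{-1}] = u\, p[a,b]\, u^{-1}$ for any nonzero quaternion $u$, together with the standard fact that the action of unit quaternions by conjugation realizes the full rotation group on the space of purely imaginary quaternions, and in particular can send $k$ to any prescribed unit purely imaginary quaternion. Combined with the scaling obtained in the previous paragraph, this covers the entire set $\{bi + cj + dk \mid b,c,d \in \mathbb{R}\}$. The only delicate step is the parametric ansatz that produces $ba = \overline{ab}$; once that observation is in place, the fundamental theorem of algebra and a rotation argument do the remaining work.
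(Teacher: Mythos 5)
Your proof is correct, and it takes a genuinely different route in the reverse inclusion that is actually \emph{more complete} than the paper's. The paper's argument forces $ab=w$ and $ba=-w$ with $w$ purely imaginary, reducing $p(ab)-p(ba)$ to $2h(\|w\|^2)w$ for a real polynomial $h(t)=\sum_{m\ge 0}c_{2m+1}(-1)^m t^m$ built from the \emph{odd}-degree coefficients of $p$; it then invokes the Intermediate Value Theorem to solve $2th(t^2)=\|v\|$. This breaks down precisely when $p$ is an even polynomial (e.g.\ $p(x)=x^2$), since then $h\equiv 0$ and the scalar equation is unsolvable for $v\neq 0$. Your ansatz $a=i$, $b=ri+tj$ yields $ba=\overline{ab}$ with $ab=-r+tk$ ranging over the full copy of $\mathbb{C}$ inside $\mathbb{H}$, so $p[a,b]=2\,\Im(p(ab))$ and the fundamental theorem of algebra (surjectivity of a nonconstant complex polynomial) delivers every real multiple of $k$ regardless of the parity structure of $p$. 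The conjugation-equivariance step, together with the transitivity of unit-quaternion conjugation on the unit sphere of purely imaginary quaternions, then lifts this to the full set $\{bi+cj+dk\}$; this rotation step is the same in both proofs. Your forward inclusion via $(ba)^i = b(ab)^{i-1}a$ and the cyclic property $\Re(xy)=\Re(yx)$ is also slightly cleaner than the paper's appeal to conjugation-reversal and is certainly correct. In short: same forward direction in spirit, a different and strictly stronger reverse direction (FTA instead of IVT) that closes a gap in the paper's case analysis.
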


To prove Theorem~\ref{qua}, we first establish a few preliminary lemmas.

\begin{lemma}\label{re}
	If $\alpha,\beta\in\mathbb{H}$ and $\lambda,\gamma\in\mathbb{R}$, then
	\begin{enumerate}[\rm (i)]
		\item $\Re(\alpha\beta)=\Re(\beta\alpha)$;
		\item $\Re(\lambda\alpha\pm\gamma\beta)=\lambda\Re(\alpha)\pm\gamma\Re(\beta)$.
	\end{enumerate}	 
\end{lemma}

\begin{proof}
	Assume that $\alpha = x + yi + zj + tk$ and $\beta = e + fi + gj + hk$ are elements of $\mathbb{H}$ where $x, y, z, t, e, f, g, h \in \mathbb{R}$. Then, a straightforward computation shows that \[ \Re(\alpha\beta) = xe - yf - zg - th = \Re(\beta\alpha),\] which proves (i). For (ii), it is not difficult to verify that \[ \Re(a\alpha \pm b\beta) = ax \pm be = a\Re(\alpha) \pm b\Re(\beta), \] for all $a, b \in \mathbb{R}$, as promised.
\end{proof}

\begin{lemma}\label{ree}
	If $\alpha,\beta\in\mathbb{H}$, then   $\Re((\alpha\beta)^n)=\Re((\beta\alpha)^n)$ for all non-negative integers $n$.
\end{lemma}

\begin{proof}
	By Lemma~\ref{re}, $\Re((\alpha\beta)^n)=\Re(\alpha\beta(\alpha\beta)^{n-1})=\Re(\beta(\alpha\beta)^{n-1}\alpha)=\Re((\beta\alpha)^{n})$.   
\end{proof}

We are now ready to prove Theorem~\ref{qua}.

\begin{proof}[The proof of Theorem~{\rm\ref{qua}}]
	We divide the proof into two parts. The first part is to show that $p[\mathbb{H},\mathbb{H}] \subseteq \{\, bi + cj + dk \mid b,c,d \in \mathbb{R} \,\}.$ For any $\alpha,\beta \in \mathbb{H}$, we can write
	\[
	p[\alpha,\beta] = \sum_{\ell=0}^{n} c_\ell\bigl((\alpha\beta)^\ell - (\beta\alpha)^\ell\bigr).
	\]
	Hence, by Lemmas~\ref{re} and~\ref{ree}, $$\Re(p[\alpha,\beta]) = \sum_{\ell=0}^{n} c_\ell \bigl(\Re((\alpha\beta)^\ell) - \Re((\beta\alpha)^\ell)\bigr) = 0,$$
	as claimed.
	
	The second part is to show that $p[\mathbb{H},\mathbb{H}] \supseteq \{\, bi + cj + dk \mid b,c,d \in \mathbb{R} \,\}.$
	Let $v\in\mathbb{H}$ such that $\Re(v)=0$.		The case $v = 0$ is trivial, so we assume $v \neq 0$. We shall construct $a,b \in \mathbb{H}$ such that $p(ab) - p(ba) = v$. The argument breaks into three steps.
	
	\medskip
	\noindent\textbf{Step 1. Reduction to odd powers.}  We first consider the following two cases.
	
	\textit{Case 1: $p$ has no odd-degree terms.} Suppose $\displaystyle p(x) = \sum_{m=0}^{\frac{n}{2}} c_{2m} x^{2m},$ with each $c_{2m} \in \mathbb{R}.$
	Fix a real number $s \neq 0$ and let $w$ be a purely imaginary quaternion. Then
	\begin{align*}
		(s + w)^{2m} - (s - w)^{2m}
		&= \sum_{\ell=0}^{2m} \binom{2m}{\ell} s^{2m - \ell} w^\ell
		- \sum_{\ell=0}^{2m} \binom{2m}{\ell} s^{2m - \ell} (-1)^\ell w^\ell \notag\\
		&= \sum_{\ell=0}^{2m} \binom{2m}{\ell} s^{2m - \ell} \bigl(1 - (-1)^\ell\bigr) w^\ell \notag\\
		&= 2 \sum_{\substack{\ell = 1 \\ \ell \text{ odd}}}^{2m}
		\binom{2m}{\ell} s^{2m - \ell} w^\ell. 
	\end{align*} Hence,
	\begin{align*}
		p(s + w) - p(s - w)
		&= \sum_{m=0}^{\frac{n}{2}} c_{2m} \bigl[(s + w)^{2m} - (s - w)^{2m}\bigr] \notag\\
		&= 2 \sum_{m=0}^{\frac{n}{2}} c_{2m}
		\sum_{\substack{\ell = 1 \\ \ell \text{ odd}}}^{2m}
		\binom{2m}{\ell} s^{2m - \ell} w^\ell. 
	\end{align*} Since $w$ is purely imaginary, it follows that  $w^2 = -\|w\|^2 \in \mathbb{R}$. Consequently,
	\[
	w^{2\ell} = (-1)^\ell \|w\|^{2\ell}, \qquad
	w^{2\ell + 1} = (-1)^\ell \|w\|^{2\ell} w.
	\]
	Therefore, $$p(s + w) - p(s - w)
	= 2 H_1(s, \|w\|^2) w,$$
	where
	\[
	H_1(s, \|w\|^2)
	:= \sum_{m=0}^{\frac{n}{2}} c_{2m}
	\sum_{\substack{\ell = 1 \\ \ell \text{ odd}}}^{2m}
	\binom{2m}{\ell} s^{2m - \ell}
	(-1)^{\frac{\ell-1}{2}} (\|w\|^2)^{\frac{\ell-1}{2}}.
	\]
	
	\medskip
	\textit{Case 2: $p$ has at least one odd-degree term.} For any purely imaginary $w$,
	\[
	p(w) - p(-w)
	= \sum_{\ell=0}^{n} c_\ell \bigl(w^\ell - (-w)^\ell\bigr)
	= 2 \sum_{\substack{\ell = 1 \\ \ell \text{ odd}}}^{n} c_\ell w^\ell.
	\]
	Since $w^2 = -\|w\|^2$ is real, it follows that $w^{2m+1} = (-1)^m \|w\|^{2m} w$. Hence,
	\[
	p(w) - p(-w)
	= 2 \Biggl(\sum_{m=0}^{\lfloor \frac{n-1}{2} \rfloor}
	c_{2m+1} (-1)^m \|w\|^{2m}\Biggr) w
	= 2 H_2(\|w\|^2) w,
	\]
	where
	\[
	H_2(t) = \sum_{m=0}^{\lfloor \frac{n-1}{2} \rfloor} c_{2m+1} (-1)^m t^m
	\]
	is a real polynomial in $t$.
	
	\medskip
	\noindent\textbf{Step 2. Choosing $a$ and $b$.} Introduce a real parameter $t > 0$ (to be determined later) and set $w = t \dfrac{v}{\|v\|}.$
	
	\textit{Case 1: $p$ has no odd-degree terms.}
	Let $\alpha := s + w$ and $\beta := s - w$.
	Then there exists $b \in \mathbb{H}^\times$ such that $b \alpha b^{-1} = \beta$.
	Indeed, if $w = 0$, we may take $b = 1$.
	If $w \neq 0$, then it follows from  \cite[Theorem~2.2]{Pa_Zha_97} that 
	there exists a quaternion $b$ such that $b w b^{-1} = -w$.
	Consequently, $$b \alpha b^{-1} = b(s + w)b^{-1} = s + b w b^{-1} = s - w = \beta.$$
	Setting $a := \alpha b^{-1}$, we then have $ab = \alpha$ and $ba = \beta$.
	
	\medskip
	\textit{Case 2: $p$ has at least one odd-degree term.}
	Choose a unit imaginary quaternion $u$ orthogonal to $v$, that is,
	$\|u\| = 1$, $u^2 = -1$, $u^{-1} = -u$, and $u \cdot v = 0$.
	Then $u \cdot w = 0$, so $uw = -wu$.
	Set $b = u$ and $a = b w$.
	It is easy to verify that
	\[
	ab = (b w)b = (u w)u = u(wu) = u(-u w) = (-u^2)w = w,
	\]
	and
	\[
	ba = b(b w) = u(u w) = u^2 w = -w.
	\]
	
	\medskip
	\noindent\textbf{Step 3. Solving for $t$.} Define a real function $h : \mathbb{R} \to \mathbb{R}$ by
	\[
	h(t) =
	\begin{cases}
		H_1(1, t^2), & \text{if } p \text{ has no odd-degree terms},\\[4pt]
		H_2(t^2), & \text{if } p \text{ has at least one odd-degree term}.
	\end{cases}
	\]
	It follows from Step~1 that
	\[
	p[a,b]
	= 2 h(\|w\|^2) w
	= 2 h(t^2) \Bigl(t \frac{v}{\|v\|}\Bigr)
	= \bigl(2t\,h(t^2)\bigr)\frac{v}{\|v\|}.
	\]
	Requiring this expression to equal $v$ gives the scalar equation
	\[
	2t\,h(t^2) = \|v\|.
	\]
	The left-hand side is a continuous odd function in $t$, vanishing at $t = 0$ and unbounded as $|t| \to \infty$. Hence, by the Intermediate Value Theorem, there exists a unique positive solution $t$. With this $t$ in mind, the elements $a,b$ constructed above satisfy $p[a,b] = v$, as promised.
\end{proof}

As promised, we are now ready to address the final part of Question~\ref{commutator ideal}, establishing the following theorem with a stronger conclusion that confirms the case  $N=1$.

\begin{theorem}
	If $p\in\mathbb{R}[x]$ is a nonconstant polynomial, then every element of $\mathbb{H}$ can be expressed as a product of two elements in $p[\mathbb{H},\mathbb{H}]$.
\end{theorem}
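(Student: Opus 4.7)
The plan is to leverage Theorem~\ref{qua}, which identifies $p[\mathbb{H},\mathbb{H}]$ exactly with the set of purely imaginary quaternions $\Im(\mathbb{H})=\{bi+cj+dk\mid b,c,d\in\mathbb{R}\}$. Under the standard identification of $\Im(\mathbb{H})$ with $\mathbb{R}^3$, the product of two purely imaginary quaternions $v_1,v_2$ satisfies the well-known formula
\[
v_1 v_2 = -\langle v_1,v_2\rangle + v_1\times v_2,
\]
where $\langle\cdot,\cdot\rangle$ and $\times$ denote the usual Euclidean inner product and cross product on $\mathbb{R}^3$. So the theorem reduces to a purely geometric claim: given any quaternion $q=a+\vec u$ with $a\in\mathbb{R}$ and $\vec u\in\mathbb{R}^3$, one can find $v_1,v_2\in\mathbb{R}^3$ satisfying the two simultaneous equations
\[
\langle v_1,v_2\rangle=-a,\qquad v_1\times v_2=\vec u.
\]

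The strategy is to split into two cases according to whether $\vec u$ vanishes. In the trivial case $\vec u=0$, if $a=0$ take $v_1=v_2=0$; otherwise take $v_1=i$ and $v_2=-a\,i$, both purely imaginary, with product $a$. In the main case $\vec u\ne 0$, I would choose an orthonormal basis $\{e_1,e_2,e_3\}$ of $\mathbb{R}^3$ with $e_3=\vec u/\|\vec u\|$, and then search for $v_1,v_2$ in the plane spanned by $e_1,e_2$. Writing $v_1=\alpha e_1$ and $v_2=\beta e_1+\gamma e_2$, the cross product becomes $v_1\times v_2=\alpha\gamma\,e_3$ and the inner product becomes $\langle v_1,v_2\rangle=\alpha\beta$. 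The system then collapses to $\alpha\gamma=\|\vec u\|$ and $\alpha\beta=-a$, and taking $\alpha=1$, $\beta=-a$, $\gamma=\|\vec u\|$ gives an explicit solution $v_1=e_1$, $v_2=-a\,e_1+\|\vec u\|\,e_2$.

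Since both $v_1$ and $v_2$ constructed in each case are purely imaginary, Theorem~\ref{qua} guarantees that they lie in $p[\mathbb{H},\mathbb{H}]$, completing the proof with $N=1$ as claimed. There is no serious obstacle here: the entire content of the argument is the reduction to the algebraic identity $v_1v_2=-\langle v_1,v_2\rangle+v_1\times v_2$ and the elementary observation that the resulting bilinear system in $\mathbb{R}^3$ is always solvable. The only mild care needed is to keep the two cases $\vec u=0$ and $\vec u\ne 0$ separate, since the basis construction degenerates in the former.
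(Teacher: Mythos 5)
Your proof is correct, and it takes a genuinely different (and arguably cleaner) route than the paper. The paper first conjugates $\alpha$ to the canonical form $a_1 + \sqrt{a_2^2+a_3^2+a_4^2}\,i$ using a lemma of Zhang, factors this canonical form explicitly as $j\cdot\bigl(-a_1 j + \sqrt{a_2^2+a_3^2+a_4^2}\,k\bigr)$, and then conjugates back, invoking the fact that $p[\mathbb{H},\mathbb{H}]$ is closed under conjugation. Your proof sidesteps both the external lemma and the conjugation-invariance argument by working directly with the identity $v_1 v_2 = -\langle v_1,v_2\rangle + v_1\times v_2$ for purely imaginary quaternions, choosing an orthonormal basis adapted to $\vec u$. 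The two are really the same geometric idea in different clothing: if you specialize your adapted-basis construction to $\vec u = \lambda i$ with $e_1 = j$, $e_2 = k$, you recover exactly the paper's factors $j$ and $-a_1 j + \lambda k$, and the conjugation by $\gamma$ in the paper plays the role of your basis change. Your version is more self-contained and exposes the bilinear system transparently; the paper's version makes the conjugation-invariance of $p[\mathbb{H},\mathbb{H}]$ (a fact it has already established and wants to reuse) do the geometric work. The only point worth stating explicitly in your write-up: the orthonormal basis must be chosen \emph{right-handed} (e.g.\ set $e_2 = e_3\times e_1$) so that $e_1\times e_2 = e_3$ rather than $-e_3$; this is always possible and costs nothing, but without saying it the step $v_1\times v_2 = \|\vec u\|\,e_3$ is not forced.
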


\begin{proof}
	Let \(\alpha \in \mathbb{H}\) and write \(\alpha = a_1 + a_2 i + a_3 j + a_4 k\) for some \(a_1, a_2, a_3, a_4 \in \mathbb{R}\). By \cite[Lemma~2.1]{Pa_Zha_97}, there exists a nonzero element \(\gamma \in \mathbb{H}\) such that  
	\[
	\gamma^{-1} \alpha \gamma = a_1 + \sqrt{a_2^2 + a_3^2 + a_4^2} \, i.
	\]
	On the other hand, we observe that  
	\[
	a_1 + \sqrt{a_2^2 + a_3^2 + a_4^2} \, i = j  \left( -a_1 j + \sqrt{a_2^2 + a_3^2 + a_4^2} \  k \right).
	\]
	According to Theorem~\ref{qua}, both elements \(j\) and \(-a_1 j + \sqrt{a_2^2 + a_3^2 + a_4^2}  k\) belong to \(p[\mathbb{H}, \mathbb{H}]\). Following the proof of Theorem~\ref{gener} again, it is known that any conjugate of a polynomial commutator is also a polynomial commutator.  It follows that $$\alpha=\gamma j\gamma^{-1}  \cdot \gamma\left( -a_1 j + \sqrt{a_2^2 + a_3^2 + a_4^2} \ k \right)\gamma^{-1}$$ can be expressed as a product of two elements in \(p[\mathbb{H}, \mathbb{H}]\), as desired.
\end{proof}

\section{Matrix algebras}\label{matrix}

In Section~\ref{intro}, we were led to Questions~\ref{que} and~\ref{que1}. 	To proceed, let us revisit a key part of the argument in \cite{Pa_LaWe_93}, starting with the question of whether the trace of a polynomial commutator is always zero. One might first expect this to be true, especially in the setting of matrix rings over noncommutative division rings, which resemble matrix rings over fields in many structural aspects. However, this expectation turns out to be incorrect, as shown in the following theorem.

\begin{theorem}\label{zz}
	Let $R$ be a  noncommutative division $F$-algebra and let $n\geq2$ be an integer. If $p\in F[x]$ is a nonconstant polynomial, then $$p[\mathrm{M}_n(R),\mathrm{M}_n(R)]\cap\{A\in\mathrm{M}_n(R)\mid\mathrm{trace}(A)\neq0\}\neq\emptyset.$$
\end{theorem}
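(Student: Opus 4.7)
The plan is to exhibit explicit matrices $A,B\in\mathrm{M}_n(R)$ whose polynomial commutator $p(AB)-p(BA)$ has a single nonzero entry on the diagonal, namely $p(uv)-p(vu)$ for some cleverly chosen $u,v\in R$. The trace will then reduce to an \emph{element-level} polynomial commutator in $R$, and Theorem~\ref{division1} will supply noncommuting $u,v$ making that element nonzero.

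First I would invoke Theorem~\ref{division1} to fix $u,v\in R$ with $p(uv)\ne p(vu)$. Next, I would consider the block matrices
\[
A=\begin{pmatrix}0 & u & 0\\ 0 & 0 & 0\\ 0 & 0 & 0\end{pmatrix},\qquad
B=\begin{pmatrix}0 & 0 & 0\\ v & 0 & 0\\ 0 & 0 & 0\end{pmatrix}\in\mathrm{M}_n(R),
\]
where the bottom-right zero blocks are of size $n-2$ (and are simply absent if $n=2$). A direct computation gives
\[
AB=\begin{pmatrix}uv & 0 & 0\\ 0 & 0 & 0\\ 0 & 0 & 0\end{pmatrix},\qquad
BA=\begin{pmatrix}0 & 0 & 0\\ 0 & vu & 0\\ 0 & 0 & 0\end{pmatrix},
\]
and by an easy induction on $i\ge 1$ the matrix $(AB)^i$ carries $(uv)^i$ in its $(1,1)$ slot with all other entries zero, while $(BA)^i$ carries $(vu)^i$ in its $(2,2)$ slot with all other entries zero.

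Writing $p(x)=\sum_{i\ge 0}a_i x^i$ and using that the $i=0$ term contributes the same scalar matrix $a_0 I_n$ to both $p(AB)$ and $p(BA)$, these contributions cancel in the difference and I would conclude
\[
p(AB)-p(BA)=\operatorname{diag}\bigl(p(uv)-a_0,\; a_0-p(vu),\;0,\dots,0\bigr).
\]
Taking the trace yields $\mathrm{trace}\bigl(p(AB)-p(BA)\bigr)=p(uv)-p(vu)$, which is nonzero by the choice of $u,v$. This places $p(AB)-p(BA)$ in the advertised intersection and completes the proof.

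There is no real obstacle here: the construction is elementary once one observes that in the noncommutative setting the map $\mathrm{trace}\colon \mathrm{M}_n(R)\to R$ loses its cyclic property and that $\mathrm{trace}(XY)-\mathrm{trace}(YX)$ can in fact equal any additive commutator in $R$. The only point requiring care is isolating $(AB)^i$ and $(BA)^i$ in \emph{different} diagonal positions so that no cancellation occurs in the trace; the rank-one block choice above achieves exactly this, and Theorem~\ref{division1} then does the rest.
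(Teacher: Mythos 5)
Your proof is correct, and it rests on the same essential idea as the paper's: construct explicit matrices $A,B\in\mathrm{M}_n(R)$ for which $\mathrm{trace}\bigl(p(AB)-p(BA)\bigr)$ collapses to an element-level polynomial commutator $p(uv)-p(vu)$ in $R$, and then invoke Theorem~\ref{division1} to make that element nonzero. The difference is only in the choice of witnesses. The paper takes the simpler diagonal matrices $a=\mathrm{diag}(1,\dots,1,\alpha)$ and $b=\mathrm{diag}(1,\dots,1,\beta)$, so that $ab$ and $ba$ are themselves diagonal, the entries $p(1)$ cancel termwise, and the trace of $p(ab)-p(ba)$ is immediately $p(\alpha\beta)-p(\beta\alpha)$ with no bookkeeping about the constant term. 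Your construction with the off-diagonal rank-one blocks works just as well: it cleverly routes $(AB)^i$ into slot $(1,1)$ and $(BA)^i$ into slot $(2,2)$, preventing any cancellation under the trace, and you correctly note that the $a_0 I_n$ contributions cancel in the difference. Your closing remark that the noncommutative trace is not cyclic and that $\mathrm{trace}(XY)-\mathrm{trace}(YX)$ can realize any additive commutator in $R$ is exactly the phenomenon the paper is exploiting; both proofs can be viewed as making that observation quantitative for polynomial commutators. In short, this is a valid alternative construction for the same argument, slightly more elaborate than the paper's diagonal choice but no less correct.
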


\begin{proof}
	By Theorem~\ref{division1}, there exist elements \( \alpha, \beta \in R \) such that \( p(\alpha\beta) \ne p(\beta\alpha) \).		Consider the diagonal matrices \( a = \mathrm{diag}(1, \ldots, 1, \alpha) \) and \( b = \mathrm{diag}(1, \ldots, 1, \beta) \) in \( \mathrm{M}_n(R) \). A direct computation shows that   $\mathrm{trace}(p(ab) - p(ba)) = p(\alpha\beta) - p(\beta\alpha).$  Since \( p(\alpha\beta) \ne p(\beta\alpha) \), the trace is nonzero. This confirms that \( p[\mathrm{M}_n(R), \mathrm{M}_n(R)] \) contains a matrix with nonzero trace, as promised.
\end{proof}

With Theorem~\ref{zz} in mind, we are now led to consider Question~\ref{que1}. We revisit part of the reasoning in \cite{Pa_LaWe_93}. In their setting, they used the fact that over a field of characteristic zero, any noncentral traceless matrix is similar to one with only zeros along the main diagonal. This property also extends to fields of positive characteristic, as inferred directly from \cite[Proposition 8]{Pa_AmRo_94}.

However, the situation becomes more delicate when \( R = \mathrm{M}_n(D) \), where \( D \) is a noncommutative division ring and $n>1$ is an integer. Although such rings share many structural similarities with fields, it remains unknown whether every noncentral matrix in \( \mathrm{M}_n(D) \) is similar to a matrix with all diagonal entries equal to zero. A related but weaker result is known: by \cite[Proposition 1.8]{Pa_AmRo_94}, any noncentral matrix in \( \mathrm{M}_n(D) \) is similar to a matrix whose diagonal entries are all zero except possibly one entry \( d \in D \). Importantly, the proof of \cite[Proposition~1.8]{Pa_AmRo_94} does not require the global assumption that \( D \) is finite-dimensional over its center. This result holds in full generality, regardless of the dimensionality of \( D \) over \( F \). This raises a natural question: can we always choose \( d = 0 \) in this similarity? That is, is every noncentral traceless matrix similar to a matrix with zero diagonal? Though this seems plausible and is stated without proof in \cite[Proposition 9]{Pa_Me_13}, there appears to be no direct derivation of this claim from \cite[Proposition 1.8]{Pa_AmRo_94}. We are grateful to Z. Mesyan \cite{Pa_Me_13} for clarifying that a small oversight occurred in his reference to Amitsur and Rowen's result, though it does not affect his conclusions, which concern matrices over fields.

If \( D \) is commutative, then the claim follows directly from the fact that trace is invariant under similarity. This means that any similarity transformation preserves the trace of a matrix, so if a noncentral matrix is traceless, then it must be similar to one with all diagonal entries equal to zero. In contrast to the field case, similarity does not preserve trace over noncommutative division rings. For example, let \( D = \mathbb{H} \) be the real quaternion division ring with standard basis \( \{1, i, j, k\} \) satisfying \( i^2 = j^2 = k^2 = ijk = -1 \). Consider the similarity:
\[
\begin{pmatrix}
	j & 0 \\ i & 1
\end{pmatrix}^{-1}
\begin{pmatrix}
	i & j \\ -j & i
\end{pmatrix}
\begin{pmatrix}
	j & 0 \\ i & 1
\end{pmatrix}
=
\begin{pmatrix}
	0 & 1 \\ 0 & 0
\end{pmatrix}.
\]
Here, the original matrix has nonzero trace but is similar to a traceless matrix. More generally, if \( a, b \in D \) do not commute, then the traceless diagonal matrix $\begin{pmatrix}
	a & 0 \\ 0 & -a
\end{pmatrix}$
is similar to a matrix with nonzero trace:
\[
\begin{pmatrix}
	1 & 0 \\ 0 & b
\end{pmatrix}
\begin{pmatrix}
	a & 0 \\ 0 & -a
\end{pmatrix}
\begin{pmatrix}
	1 & 0 \\ 0 & b
\end{pmatrix}^{-1}=\begin{pmatrix}
a&0\\
0&-bab^{-1}
\end{pmatrix}.
\]
This shows that trace is not preserved under similarity when \( D \) is noncommutative. Even if we attempt to adapt the method of \cite[Proposition 1.8]{Pa_AmRo_94}, difficulties arise quickly. Consider the simple case \( n = 2 \) and a noncentral traceless matrix of the form
\[
A = \begin{pmatrix} a & b \\ c & -a \end{pmatrix} \in \mathrm{M}_2(D).
\]
As explained in \cite[Proposition 1.8]{Pa_AmRo_94}, for any \( u \in D \), one can compute
\[
\begin{pmatrix} 1 & u \\ 0 & 1 \end{pmatrix} A \begin{pmatrix} 1 & u \\ 0 & 1 \end{pmatrix}^{-1} = 
\begin{pmatrix} a + uc & -(a + uc)u + (b - ua) \\ c & -cu - a \end{pmatrix}.
\]
To obtain zero diagonal, we would need \( a + uc = 0 \) and \( -cu - a = 0 \), which together imply \( u = -ac^{-1} = -c^{-1}a \), a condition that requires \( ac = ca \). Since this does not hold in general, the argument fails.

For this reason, we will work initially with matrices whose diagonals are all zero. 

To proceed, we require a series of auxiliary lemmas. The first lemma can be proven by induction on the matrix size, in a manner similar to the argument in \cite[Lemma 2.1]{Pa_DuSo_25}. For conciseness, we omit the details.

\begin{lemma}\label{cheo hoa}
	Let \( R \) be a unital associative algebra over an infinite field \( F \), and let \( n \) be a positive integer. Suppose \( A \in \mathrm{M}_n(R) \) is a triangular matrix whose diagonal entries \( a_1, a_2, \ldots, a_n \) lie in \( F \) and are pairwise distinct. Then, there exists an invertible matrix \( P \in \mathrm{M}_n(R) \) such that  $P^{-1} A P = \mathrm{diag}(a_1, \ldots, a_n).$
\end{lemma}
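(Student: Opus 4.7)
\medskip

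\noindent\textbf{Proof plan.} My approach is induction on $n$, and I may assume without loss of generality that $A$ is upper triangular (the lower-triangular case is completely symmetric, obtained by transposing the block decomposition below).

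For the base case $n=2$, I write $A=\begin{pmatrix} a_1 & b \\ 0 & a_2 \end{pmatrix}$ and look for $P=\begin{pmatrix} 1 & u \\ 0 & 1 \end{pmatrix}$ with $u\in R$ diagonalizing $A$. A direct computation reduces this to the scalar equation $(a_1-a_2)u=-b$, which is solvable with $u=-b(a_1-a_2)^{-1}$ because $a_1-a_2$ is a nonzero element of $F$ and hence a central unit of $R$.

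For the inductive step, I partition
\[
A=\begin{pmatrix} a_1 & \mathbf{v}^T \\ 0 & A' \end{pmatrix},
\]
where $A'\in\mathrm{M}_{n-1}(R)$ is upper triangular with diagonal $(a_2,\ldots,a_n)$, and seek first a block conjugator $P_1=\begin{pmatrix} 1 & \mathbf{u}^T \\ 0 & I_{n-1} \end{pmatrix}$ clearing the first row of off-diagonal entries. A short block computation gives
\[
P_1^{-1}AP_1=\begin{pmatrix} a_1 & a_1\mathbf{u}^T+\mathbf{v}^T-\mathbf{u}^T A' \\ 0 & A' \end{pmatrix},
\]
so the task reduces to solving $\mathbf{u}^T(A'-a_1 I_{n-1})=\mathbf{v}^T$. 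The coefficient matrix $A'-a_1 I_{n-1}$ is upper triangular with diagonal entries $a_i-a_1\in F^{\times}$ (by the pairwise-distinctness hypothesis), and I claim such a matrix is invertible in $\mathrm{M}_{n-1}(R)$: factor it as $D(I+D^{-1}N)$, where $D$ is its diagonal part (invertible because each $a_i-a_1$ lies in the centre $F\subseteq R$) and $N$ its strictly upper triangular part, so that $D^{-1}N$ is nilpotent and $I+D^{-1}N$ is inverted by a finite Neumann-type sum $\sum_{k=0}^{n-2}(-D^{-1}N)^k$. Solving the equation yields $\mathbf{u}^T=\mathbf{v}^T(A'-a_1 I_{n-1})^{-1}$, and after this similarity $A$ becomes the block-diagonal matrix $\mathrm{diag}(a_1,A')$.

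Applying the inductive hypothesis to $A'$ produces an invertible $P_2'\in\mathrm{M}_{n-1}(R)$ with $(P_2')^{-1}A'P_2'=\mathrm{diag}(a_2,\ldots,a_n)$; setting $P_2=\mathrm{diag}(1,P_2')$ and $P=P_1P_2$ then yields $P^{-1}AP=\mathrm{diag}(a_1,\ldots,a_n)$, as required. The only point where noncommutativity must be handled with genuine care is the invertibility of a triangular matrix over $R$ whose diagonal entries are central units; once that is in place, the rest of the argument is a direct block computation exploiting the fact that elements of $F$ commute with everything in $R$.
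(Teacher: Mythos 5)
Your proof is correct and follows exactly the strategy the paper points to (the paper omits the proof, referring to an induction on matrix size as in \cite[Lemma 2.1]{Pa_DuSo_25}). The induction is the right tool: the block conjugation by $P_1=\left(\begin{smallmatrix} 1 & \mathbf{u}^T \\ 0 & I\end{smallmatrix}\right)$ reduces $A$ to $\operatorname{diag}(a_1,A')$, and the equation $\mathbf{u}^T(A'-a_1I_{n-1})=\mathbf{v}^T$ is solvable because $A'-a_1I_{n-1}$ is upper triangular with diagonal entries $a_i-a_1\in F^\times\subseteq Z(R)^\times$, whence invertible by the $D(I+D^{-1}N)$ factorisation with $D^{-1}N$ strictly upper triangular and nilpotent. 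The one place where I would be slightly more careful is the sentence about the lower-triangular case being ``obtained by transposing'': over a noncommutative $R$ the transpose on $\mathrm{M}_n(R)$ is not an anti-homomorphism, so one should not literally pass to $A^T$. What you intend, and what works, is to mirror the block decomposition directly --- partition a lower-triangular $A$ as $\left(\begin{smallmatrix} a_1 & 0 \\ \mathbf{v} & A'\end{smallmatrix}\right)$, conjugate by $\left(\begin{smallmatrix} 1 & 0 \\ \mathbf{u} & I\end{smallmatrix}\right)$, and solve $(A'-a_1I_{n-1})\mathbf{u}=-\mathbf{v}$ using the same invertibility argument. With that phrasing tightened, the proof is complete and essentially the one the paper has in mind.
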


We now state a standard result, but include its proof here for completeness.

\begin{lemma}\label{distinct}
	Let \( F \) be an infinite field, and let \( p \in F[x] \) be a nonconstant polynomial. Then the image of \( p \) evaluated on \( F \), namely \( \{p(a) \mid a \in F\} \), is infinite.
\end{lemma}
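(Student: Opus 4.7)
The plan is a direct contradiction argument based on the classical fact that a nonzero polynomial of degree \(d\) over a field has at most \(d\) roots. First I would suppose, for the sake of contradiction, that the image \(\{p(a) \mid a \in F\}\) is finite, say equal to \(\{c_1, c_2, \ldots, c_N\}\) for some positive integer \(N\). Then every element of \(F\) lies in one of the fibers \(p^{-1}(c_i) = \{a \in F \mid p(a) = c_i\}\), so \(F\) decomposes as the union of these finitely many fibers.

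Next, for each index \(i\), I would examine the polynomial \(p(x) - c_i \in F[x]\). Since \(p\) is nonconstant, \(\deg(p) \ge 1\), so the polynomial \(p(x) - c_i\) is nonzero and has the same degree as \(p\); consequently, by the standard bound on roots over a field, the fiber \(p^{-1}(c_i)\) contains at most \(\deg(p)\) elements. Combining all the fibers gives \(|F| \le N \cdot \deg(p) < \infty\), which contradicts the assumption that \(F\) is infinite. The only delicate point in the argument is ensuring that \(p(x) - c_i\) is genuinely nonzero, but this is immediate from the nonconstancy of \(p\); beyond that, the proof is routine and presents no real obstacle.
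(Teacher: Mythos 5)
Your proof is correct and follows essentially the same approach as the paper's: assume the image is finite, decompose $F$ into the finitely many fibers $p^{-1}(c_i)$, and reach a contradiction with the infinitude of $F$ using the fact that a nonconstant polynomial over a field has only finitely many roots. The paper applies pigeonhole to conclude that one fiber must be infinite, whereas you bound the total cardinality by $N \cdot \deg(p)$; these are interchangeable phrasings of the same argument.
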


\begin{proof}
	Define the evaluation map \( \varphi_p:F\to F \) by \(\varphi_p(a)=p(a)\) for all \(a\in F\). Suppose for contradiction that the image of \( \varphi_p \) is finite, say \( \{a_1, \ldots, a_k\} \), with \( a_i \neq a_j \) for \( i \neq j \), where $k\geq1$ is an integer. Then,  $\displaystyle F = \bigcup_{i=1}^k \varphi_p^{-1}(a_i).$ Since \( F \) is infinite, at least one of the fibers \( \varphi_p^{-1}(a_i) \) must be infinite. Let this be \( \varphi_p^{-1}(a_{i_0}) \). Then the equation \( p(x) = a_{i_0} \) has infinitely many solutions in \( F \), contradicting the fact that \( p \) is a nonconstant polynomial over a field. This contradiction proves the lemma.
\end{proof}

With these lemmas established, we can now prove the following main result.

\begin{theorem}\label{zero}
	Let \( R \) be a unital associative algebra over an infinite field \( F \), and let \( n > 1 \) be an integer. Suppose that $p\in F[x]$ is a nonconstant polynomial. If \( A \in \mathrm{M}_n(R) \) is similar to a matrix in $\mathrm{M}_n(R)$ with all diagonal entries equal to zero, then \( A \in p[ \mathrm{M}_n(R), \mathrm{M}_n(R)] \).
\end{theorem}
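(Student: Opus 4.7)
The plan is to first reduce to the case where $A$ itself has zero diagonal, using the fact that $p[\mathrm{M}_n(R), \mathrm{M}_n(R)]$ is invariant under simultaneous conjugation: if $A = S A' S^{-1}$ and $A' = p(XY) - p(YX)$, then $A = p(X' Y') - p(Y' X')$ with $X' = S X S^{-1}$ and $Y' = S Y S^{-1}$. Hence it suffices to realize an arbitrary $A \in \mathrm{M}_n(R)$ with $A_{ii} = 0$ for every $i$ as a polynomial commutator.

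Since $F$ is infinite, choose pairwise distinct scalars $b_1, \dots, b_n \in F$ and set $B = \mathrm{diag}(b_1, \dots, b_n)$; if needed, Lemma~\ref{distinct} can be invoked to arrange that $p(b_1), \dots, p(b_n)$ are also distinct. The telescoping identity $(BC)^k - (CB)^k = [B, (CB)^{k-1} C]$, valid for all $C \in \mathrm{M}_n(R)$ and $k \ge 1$ (via $B(CB)^{k-1} = (BC)^{k-1} B$), summed against the coefficients of $p$, yields the key identity
\[
p(BC) - p(CB) \;=\; [B,\, C\, q(BC)],
\]
where $q \in F[x]$ is defined by $p(x) - p(0) = x\, q(x)$. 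Because the $b_i$ lie in the center of $R$, one has $[B, M]_{ij} = (b_i - b_j)\, M_{ij}$, and so the equation $p(BC) - p(CB) = A$ reduces to finding $C \in \mathrm{M}_n(R)$ with $(C q(BC))_{ij} = A_{ij}/(b_i - b_j)$ for all $i \ne j$, the diagonal entries of $C q(BC)$ being unconstrained.

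To construct such a $C$, expand $C q(BC) = p_1 C + p_2 C B C + p_3 C B C B C + \cdots$. When $C$ is chosen strictly upper triangular, each term $C(BC)^{r-1}$ is strictly upper triangular with nonzero entries only at positions $(i, j)$ satisfying $j - i \ge r$; the system then decouples by gap. Provided $p_1 \ne 0$, one solves for $C_{i, i+1}$ first, then $C_{i, i+2}$ in terms of these, and so on. A symmetric construction handles strictly lower triangular targets. For a general zero-diagonal $A$ one attempts $C = C^+ + C^-$ as a sum of strictly upper and strictly lower parts, solving the combined system while adjusting for the cross-terms $C^+ B C^-$ and $C^- B C^+$, which contribute to both off-diagonal and diagonal entries of $C q(BC)$.

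The principal obstacle is the case where $p$ has no linear term ($p_1 = 0$), which includes $p(x) = x^k$ with $k \ge 2$ and any polynomial consisting only of even-degree terms: the layer-by-layer recursion then collapses because its leading coefficient vanishes. To overcome this, one must allow $C$ to have nonzero diagonal entries, chosen preferably in $F$ so that their centrality preserves the linearity of the equations at each layer; one then leverages the lowest nonvanishing coefficient $p_m$ to drive an analogous triangulated solve, using Lemma~\ref{cheo hoa} to put an auxiliary triangular matrix into diagonal form and Lemma~\ref{distinct} to secure the required scalars in general position. The detailed case analysis, driven by the lowest nonzero coefficient of $p$, is where the main technical effort of the proof resides.
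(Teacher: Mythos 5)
Your reduction to the zero-diagonal case and your telescoping identity $p(BC) - p(CB) = [B,\, C\,q(BC)]$ are both correct, but the proposal stops short of a proof exactly where the difficulty lies. Over an arbitrary unital $F$-algebra $R$, once $C = C^+ + C^-$ has both a strictly upper and a strictly lower part, $C\,q(BC)$ contains cross-terms such as $C^+ B C^-$ and $C^- B C^+$ that are nonlinear in the unknowns and spill onto every position, including the diagonal; the system does not decouple ``by gap'', and there is no evident iteration that solves it over a general noncommutative $R$. The $p_1 = 0$ case, which you flag, is a further genuine obstruction that you leave unresolved. As written this is a program, not a proof.

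The paper takes a different and much lighter route that avoids inverting the map $C \mapsto [B, C\,q(BC)]$ altogether. Write the zero-diagonal matrix $A'$ as $L - U$ with $L$ strictly lower and $U$ strictly upper triangular. Choose $D = \mathrm{diag}(\alpha_1, \dots, \alpha_n)$ with $\alpha_i \in F$ such that $p(\alpha_1), \dots, p(\alpha_n)$ are pairwise distinct (possible by Lemma~\ref{distinct}). Then $L_1 = L + p(D)$ and $U_1 = p(D) + U$ are triangular matrices with the same central, pairwise-distinct diagonal, so Lemma~\ref{cheo hoa} supplies invertible $G_1, G_2$ with $G_1^{-1} L_1 G_1 = p(D) = G_2^{-1} U_1 G_2$. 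Since conjugation commutes with $p$, this says $L_1 = p(G_1 D G_1^{-1})$ and $U_1 = p(G_2 D G_2^{-1})$, hence
\[
A' = L_1 - U_1 = p(G_1 D G_1^{-1}) - p(G_2 D G_2^{-1}) = p(A_1 B_1) - p(B_1 A_1),
\]
where $A_1 = G_1 G_2^{-1}$ and $B_1 = G_2 D G_1^{-1}$, because $A_1 B_1 = G_1 D G_1^{-1}$ and $B_1 A_1 = G_2 D G_2^{-1}$. (A final conjugation transports this from $A'$ to $A$.) There is no layer-by-layer solve and no case split on the coefficients of $p$. The key move you miss is to add $p(D)$ to each triangular half and push the polynomial through the conjugation, rather than trying to solve a nonlinear system for $C$.
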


\begin{proof}
	Suppose \( A \in \mathrm{M}_n(R) \) is similar to a matrix \( A' \in \mathrm{M}_n(R) \) with all diagonal entries zero, so that \( A = G A' G^{-1} \) for some invertible \( G \in \mathrm{M}_n(R) \). We can express \( A' \) as the difference of two triangular matrices  $A' = L - U,$ where \( L \) is lower triangular and \( U \) is upper triangular, and both have zero diagonal entries. Since \(F\) is infinite and \(p\) is a nonconstant polynomial, Lemma~\ref{distinct} implies that the image \(p(F)\) is infinite. Hence, we may select \(n\) distinct elements \(\beta_1,\dots,\beta_n \in p(F)\). For each \(\beta_i\), choose \(\alpha_i \in F\) such that \(p(\alpha_i) = \beta_i\). If \(\alpha_i = \alpha_j\) for some \(i \ne j\), then \(p(\alpha_i) = p(\alpha_j)\), which contradicts the distinctness of \(\beta_i\) and \(\beta_j\). Thus, both the elements \(\alpha_1, \dots, \alpha_n\) and their images under \(p\) are pairwise distinct. Define 	$$D = \mathrm{diag}(\alpha_1, \ldots, \alpha_n), \quad L_1 = L + p(D) \quad \text{and} \quad U_1 = p(D) + U.$$ By Lemma~\ref{cheo hoa}, there exist invertible matrices \( G_1, G_2 \in \mathrm{M}_n(R) \) such that  
	\[
	G_1^{-1} L_1 G_1 = p(D) = G_2^{-1} U_1 G_2.
	\]  
	Then,
	\[
	\begin{aligned}
		A &= G A' G^{-1} = G(L - U)G^{-1} \\
		&= G(L_1 - U_1)G^{-1} \\
		&= G(G_1 p(D) G_1^{-1} - G_2 p(D) G_2^{-1})G^{-1} \\
		&= p(G G_1 D G_1^{-1} G^{-1}) - p(G G_2 D G_2^{-1} G^{-1}).
	\end{aligned}
	\]  
	Let \( A_1 = G G_1 G_2^{-1} G^{-1} \) and \( B_1 = G G_2 D G_1^{-1} G^{-1} \). Then \[ A = p(A_1B_1) - p(B_1A_1)=p[A_1,B_1], \] as desired.   This completes the proof.
\end{proof}

In what follows, the following corollary, which parallels a result from \cite{Pa_LaWe_93}, follows directly from Theorem~\ref{zero}.  

\begin{corollary}\label{pooo}
	Let \( F \) be an infinite field and \( n > 1 \) an integer. If \( p \in F[x] \) is a nonconstant polynomial, then every noncentral traceless matrix in \( \mathrm{M}_n(F) \) is in \(p[ \mathrm{M}_n(F), \mathrm{M}_n(F)] \). Moreover, if \( F \) has characteristic zero, then $$p[ \mathrm{M}_n(F), \mathrm{M}_n(F)]=\{A\in \mathrm{M}_n(F)\mid\mathrm{trace}(A)=0\}.$$
\end{corollary}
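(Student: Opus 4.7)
The plan is to deduce both assertions as essentially immediate consequences of Theorem~\ref{zero}, combined with the classical Shoda--Albert--Muckenhoupt-type result (cited already in the introduction and in \cite[Proposition~8]{Pa_AmRo_94}) that over an arbitrary field every noncentral traceless matrix is similar to one whose diagonal is identically zero. First I would invoke this classical similarity statement: given a noncentral traceless $A\in\mathrm{M}_n(F)$, pick an invertible $G$ with $G^{-1}AG$ having zero diagonal. Then Theorem~\ref{zero} applies directly (its hypothesis is exactly similarity to a matrix with zero diagonal entries over any unital associative $F$-algebra, $F$ infinite), yielding $A\in p[\mathrm{M}_n(F),\mathrm{M}_n(F)]$. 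This establishes the first half of the corollary.

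For the second half, assume $\mathrm{char}(F)=0$, and argue equality in two directions. For the inclusion $p[\mathrm{M}_n(F),\mathrm{M}_n(F)]\subseteq\{A\mid\mathrm{trace}(A)=0\}$, write $p(x)=\sum_{k\ge 0}c_k x^k$ and note that by cyclic invariance of the trace, $\mathrm{trace}((AB)^k)=\mathrm{trace}(A(BA)^{k-1}B)=\mathrm{trace}((BA)^k)$ for every $k\ge 1$, while the constant terms $c_0 I_n$ cancel. Therefore $\mathrm{trace}(p(AB)-p(BA))=0$ for all $A,B\in\mathrm{M}_n(F)$. For the reverse inclusion, take any traceless $A\in\mathrm{M}_n(F)$. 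If $A$ is noncentral, apply the first half. If $A$ is central, then $A=\lambda I_n$ with $n\lambda=0$; since $\mathrm{char}(F)=0$ forces $\lambda=0$, the only central traceless matrix is $A=0$, and $0=p(0\cdot 0)-p(0\cdot 0)=p[0,0]\in p[\mathrm{M}_n(F),\mathrm{M}_n(F)]$ trivially.

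There is no real obstacle here beyond citing the classical similarity reduction correctly; the computational content has been absorbed into Theorem~\ref{zero}. The only point that deserves a small verbal remark, to avoid an apparent gap, is the central-traceless case, where one must use $\mathrm{char}(F)=0$ to exclude nonzero scalar matrices $\lambda I_n$ with $n\lambda=0$. In positive characteristic dividing $n$ the equality fails in general (scalar matrices $\lambda I_n$ with $n\lambda=0$ are traceless but cannot be polynomial commutators, as the cyclic-trace argument still gives trace zero and they are central, hence never lie in the noncentral image of the construction). This is precisely why the equality is stated only in characteristic zero, and why the first half is formulated only for \emph{noncentral} traceless matrices.
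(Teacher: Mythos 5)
Your proof of the stated corollary is correct and follows the same route the paper intends: reduce a noncentral traceless matrix to one with zero diagonal via similarity (the classical fact for characteristic zero, extended to positive characteristic by \cite[Proposition~8]{Pa_AmRo_94}), apply Theorem~\ref{zero}, and obtain the reverse inclusion in characteristic zero from cyclic invariance of the trace, with the only central traceless matrix in characteristic zero being the zero matrix.

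One caution, though it lies outside the statement you were asked to prove: your parenthetical claim that the equality \emph{fails} whenever the characteristic divides $n$, because nonzero scalar matrices ``cannot be polynomial commutators,'' is not justified. The argument that such matrices ``never lie in the noncentral image of the construction'' only shows that Theorem~\ref{zero} does not reach them; it does not show that no pair $(A,B)$ satisfies $p(AB)-p(BA)=\lambda I_n$. In fact the claim is false at least for $p(x)=x$: if $\mathrm{char}(F)=q$ and $q\mid n$, the identity matrix \emph{is} a commutator. Taking $n=q$, let $X$ be the nilpotent lower-shift and $P$ the matrix acting as differentiation on the basis $1,t,\dots,t^{q-1}$ of $F[t]/(t^q)$; then $PX - XP = I_q$, and a block-diagonal sum handles general $n$ divisible by $q$. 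So whether the equality survives in positive characteristic is a genuinely open point, not something that follows from the construction being restricted to zero-diagonal matrices.
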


\section{Algebraic structures generated by polynomial commutators}\label{section what}

In this section, we focus on Question~\ref{what}. Understanding representations of matrices described in Theorem~\ref{zero} sheds light on the underlying algebraic structure, particularly in connection with the structure generated  by polynomial commutators. A notable result from \cite[Proposition 3.1]{Pa_BreGaThi_2025} asserts that any triangular matrix of size \( n \geq 3 \) over a ring \( R \) can be expressed as a product of two matrices in \( \mathrm{M}_n(R) \), each having zero entries along the main diagonal. Combining this with Theorem~\ref{zero} leads directly to the following theorem. 

\begin{theorem}\label{al2}
	Let \( R \) be a unital associative algebra over an infinite field \( F \), and let \( n \geq 3 \) be an integer. If \( p\in F[x] \) is a nonconstant polynomial, then every triangular matrix in \( \mathrm{M}_n(R) \) can be written as a product of two elements in $p[\mathrm{M}_n(R),\mathrm{M}_n(R)]$.
\end{theorem}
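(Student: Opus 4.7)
The plan is straightforward: concatenate the factorization provided by \cite[Proposition 3.1]{Pa_BreGaThi_2025} with Theorem~\ref{zero} just established. Let $T \in \mathrm{M}_n(R)$ be an arbitrary triangular matrix. Since $n \geq 3$, the cited proposition applies and lets me write $T = M_1 M_2$, where $M_1, M_2 \in \mathrm{M}_n(R)$ are matrices whose diagonals consist entirely of zeros.

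Next, I would observe that each $M_i$ is trivially similar to itself (via the identity) and already has every diagonal entry equal to zero, so the hypothesis of Theorem~\ref{zero} is satisfied. The remaining hypotheses of Theorem~\ref{zero}, namely that $F$ is infinite and that $n > 1$, are inherited directly from the standing assumptions of the current theorem (in fact $n \geq 3$ is even stronger than needed for that step). Applying Theorem~\ref{zero} to each factor produces $M_i \in p[\mathrm{M}_n(R), \mathrm{M}_n(R)]$ for $i = 1, 2$, and consequently $T = M_1 M_2$ is exhibited as a product of two polynomial commutators, as required.

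Because the argument is a direct concatenation of two existing results, I do not foresee any serious technical obstacle; the heavy lifting is done by \cite[Proposition~3.1]{Pa_BreGaThi_2025} on the combinatorial side and by Theorem~\ref{zero} on the algebraic side. The only point warranting mild care is matching conventions: one must check that the notion of ``triangular'' in the cited proposition (upper or lower) covers the orientation considered here, which is routine, since the opposite orientation is handled either by transposition or by a symmetric version of the same factorization.
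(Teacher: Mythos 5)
Your proposal matches the paper's intended argument exactly: factor the triangular matrix via \cite[Proposition~3.1]{Pa_BreGaThi_2025} into two zero-diagonal matrices, then apply Theorem~\ref{zero} (each factor being trivially similar to itself, a zero-diagonal matrix) to conclude both factors lie in $p[\mathrm{M}_n(R),\mathrm{M}_n(R)]$. The paper states precisely this combination just before the theorem and omits a formal proof, so your write-up is essentially the same approach, spelled out.
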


When the ring $R$ in Theorem~\ref{al2} is restricted to a more perfect algebraic setting, namely, division rings in the noncommutative case and fields in the commutative one, we can go further to show that every matrix is a product of three polynomial commutators. Moreover, when working over fields,  only two such commutators are needed, as presented below.

\begin{theorem}\label{field}
	Let $F$ be an infinite field and let $n>1$ be an integer. If $p\in F[x]$ is a nonconstant polynomial, then every matrix in $\mathrm{M}_n(F)$ can be expressed as a product of two elements in $p[\mathrm{M}_n(F),\mathrm{M}_n(F)]$.
\end{theorem}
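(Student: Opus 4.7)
The plan is to reduce Theorem~\ref{field} to the purely matrix-theoretic statement that every $A\in\mathrm{M}_n(F)$ factors as $A=BC$ with both $B$ and $C$ noncentral and of trace zero. Once such a factorization is produced, Corollary~\ref{pooo} immediately yields $B,C\in p[\mathrm{M}_n(F),\mathrm{M}_n(F)]$, proving the theorem.

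I would first dispose of the scalar case. For $A=0$, take $B=C=E_{12}$, a nonzero nilpotent that is noncentral and trace-zero, satisfying $E_{12}^2=0$. For $A=\alpha I$ with $\alpha\neq 0$, let $P$ be the cyclic permutation matrix determined by $Pe_i=e_{i+1}$ for $1\leq i<n$ and $Pe_n=e_1$. Then $P$ and $P^{-1}$ are fixed-point-free permutation matrices, hence noncentral and trace-zero, and the identity $A=P\cdot(\alpha P^{-1})$ delivers the desired factorization.

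For nonscalar $A$, the strategy is to locate an invertible noncentral trace-zero matrix $B$ such that $C:=B^{-1}A$ is also trace-zero; noncentrality of $C$ will then come essentially for free, since $C=\lambda I$ forces $A=\lambda B$, confining $B$ to a single line that can be avoided within a higher-dimensional solution variety. Using the identity $\mathrm{adj}(B)=\det(B)\,B^{-1}$, the trace condition rewrites as the polynomial equation $P_A(B):=\mathrm{trace}(\mathrm{adj}(B)\,A)=0$ of total degree $n-1$ in the entries of $B$. The crux is to show that $P_A$ does not vanish identically on the hyperplane $V\subset\mathrm{M}_n(F)$ of trace-zero matrices whenever $A$ is nonscalar. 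For $n=2$ this is immediate: on $V$ one has $\mathrm{adj}(B)=-B$, so the condition collapses to the nonzero linear functional $-\mathrm{trace}(BA)$. For $n\geq 3$, I would exhibit an explicit elementary trace-zero witness $B_0\in V$ (after conjugating $A$ into a convenient representative of its similarity class if necessary) with $P_A(B_0)\neq 0$, exploiting the fact that $A$ must possess some nonzero off-diagonal entry or unequal diagonal entries. Once $P_A\not\equiv 0$ on $V$, the infinitude of $F$ ensures that $\{P_A=0\}\cap V$ is a proper Zariski-closed subset of $V$ whose complement in the nonempty open locus of invertible noncentral matrices of $V$ is Zariski dense, and intersecting these loci yields the required $B$.

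The principal obstacle is the non-identical-vanishing of $P_A$ on $V$ for arbitrary nonscalar $A$ when $n\geq 3$, since $P_A$ is assembled from all the $(n-1)\times(n-1)$ minors of $B$ and is considerably more delicate than in the $n=2$ case. Overcoming it likely requires a short case analysis on the structure of $A$ (diagonal support versus off-diagonal support) together with a concrete choice of $B_0$; once the polynomial is known to be nontrivial on $V$, the remainder of the argument is standard Zariski-density over the infinite field $F$.
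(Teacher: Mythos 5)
Your high-level reduction is exactly the one the paper uses: show that every $A\in\mathrm{M}_n(F)$ factors as $A=BC$ with both $B,C$ noncentral and trace-zero, then invoke Corollary~\ref{pooo}. Where the two arguments diverge is in how that factorization is obtained. The paper simply cites Botha's theorem \cite[Theorem~4.1]{Pa_Bo_97}, which states that any square matrix over a field is a product of two trace-zero matrices, together with the observation (read off from Botha's proof) that both factors can be taken noncentral. You instead try to construct the factorization from scratch via the adjugate equation $P_A(B):=\mathrm{trace}(\mathrm{adj}(B)\,A)=0$ on the trace-zero hyperplane $V$.

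As written, your argument has a real gap, and it sits exactly where you flag it. You verify $P_A\not\equiv 0$ on $V$ only for $n=2$ (where $\mathrm{adj}(B)=-B$ collapses $P_A$ to a linear form); for $n\geq 3$ you only promise an ``explicit elementary trace-zero witness'' without producing one or showing one exists for every nonscalar $A$. Moreover, even granting $P_A\not\equiv 0$ on $V$, the closing Zariski-density step is not quite right: you need a point \emph{inside} the proper closed subvariety $\{P_A=0\}\cap V$ that is simultaneously invertible, noncentral, and has $B^{-1}A$ noncentral, and properness of $\{P_A=0\}\cap V$ in $V$ does not by itself guarantee that this variety meets those open conditions. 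For instance, one must rule out that the hypersurface $P_A=0$ lies entirely inside the singular locus $\det B=0$ (a degree comparison against the irreducible degree-$n$ polynomial $\det$, with $n>n-1=\deg P_A$, would do it, but none of this is in the writeup). The cleanest repair is to do what the paper does and invoke Botha's factorization theorem; alternatively, your sketch would need the $n\geq 3$ witness and the irreducibility/containment arguments made explicit.
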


\begin{proof}
	It was established in \cite[Theorem 4.1]{Pa_Bo_97} that any matrix over a field can be expressed as the product of two traceless matrices. A closer look at the proof of \cite[Theorem~4.1]{Pa_Bo_97} reveals that both factors in the decomposition are noncentral. Combining \cite[Theorem~4.1]{Pa_Bo_97} with Corollary~\ref{pooo}, it follows that every matrix over a field is a product of two polynomial commutators, as promised.
\end{proof}

The case of matrices over noncommutative division rings presents additional challenges, particularly because it remains unknown whether every noncentral trace-zero matrix over a field is similar to one with all diagonal entries equal to zero. This uncertainty motivates a different strategy in our investigation. Rather than relying on diagonal similarity, we proceed by factoring matrices into products of triangular matrices (or those similar to such), together with Theorem~\ref{al2}.

We begin with a result that proves useful when dealing with invertible matrices over division rings. It may be viewed as a particular instance of a more general theorem from \cite[Theorem 2.1]{Pa_EgGo_19}.

\begin{lemma}[Theorem 2.1, \cite{Pa_EgGo_19}]\label{non singular}  
	Let \( D \) be a division ring and let \( n > 1 \) be an integer. If \( A \in \mathrm{M}_n(D) \) is invertible and noncentral, then there exists an invertible matrix \( P \in \mathrm{M}_n(D) \) such that  $P^{-1} A P = U V,$ where \( U \) is a lower triangular matrix with all diagonal entries equal to 1, and \( V \) is an upper triangular matrix with diagonal entries \( 1, 1, \ldots, 1, d \) for some \( d \in D \setminus \{0\} \).
\end{lemma}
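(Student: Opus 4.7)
My plan is to establish the decomposition in three steps: a similarity reduction that puts $A$ into a form amenable to elimination, a noncommutative LU-type factorization, and a final normalization of the diagonal of the upper factor.

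Step 1 (Preparing a suitable conjugate): Since $A$ is invertible and noncentral, I would seek a column vector $v \in D^n$ such that $v, Av, \ldots, A^{n-1}v$ generate $D^n$ as a right $D$-module, giving $A$ a cyclic vector. Taking $P_0$ to be the change-of-basis matrix from $\{v, Av, \ldots, A^{n-1}v\}$ to the standard basis, the conjugate $P_0^{-1}AP_0$ acquires a companion-like first column, so its $(1,1)$ entry and the subdiagonal entries can be manipulated through further elementary conjugations. (If a full cyclic vector does not exist, a block-cyclic reduction via the rational canonical form achieves an analogous preparation; the noncentrality of $A$ guarantees that this preparation is nontrivial.)

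Step 2 (LU factorization): Starting from the prepared matrix, I would perform the noncommutative analogue of Gaussian elimination. Left multiplication by unit lower triangular matrices successively clears the entries beneath each pivot, and since the matrix is invertible, all pivots may be taken nonzero. Grouping these elementary operations together yields a factorization $L_1 R$, where $L_1$ is unit lower triangular and $R$ is upper triangular with diagonal $(d_1, \ldots, d_n) \in (D^\times)^n$.

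Step 3 (Normalizing the diagonal): To achieve the required diagonal $(1, \ldots, 1, d)$ on $V$, I would absorb $d_1, \ldots, d_{n-1}$ out of $R$. For each index $i < n$, conjugation by a diagonal matrix inserting $d_i$ in the $i$-th slot adjusts the $(i,i)$ entry of $R$ to $1$ while altering the off-diagonal entries and the lower factor in a controlled way. After consolidating all triangular and conjugating matrices into a single invertible $P$, I obtain $P^{-1}AP = UV$ with $U$ unit lower triangular and $V$ upper triangular with the prescribed diagonal; the residual scalar $d$ lands in the $(n,n)$ slot.

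The main obstacle I anticipate lies in Step 3, because in a noncommutative division ring one cannot freely rescale rows through the triangular structure as one can over a field: a diagonal rescaling interacts nontrivially with the off-diagonal entries produced in Step 2, and preserving unit-lower-triangularity of $U$ during this bookkeeping is delicate. If this direct normalization proves unwieldy, I would fall back on an induction on $n$: use Steps 1 and 2 to bring the conjugated matrix to the block form $\begin{pmatrix} 1 & * \\ 0 & A' \end{pmatrix}$ with $A' \in \mathrm{M}_{n-1}(D)$ invertible, and then apply the inductive hypothesis to $A'$ to supply the decomposition in the lower-right block, trusting the noncentrality of $A$ to ensure $A'$ is noncentral at each stage.
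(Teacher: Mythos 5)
This lemma is quoted in the paper as a black box: the authors explicitly cite it as a particular case of \cite[Theorem 2.1]{Pa_EgGo_19} and give no proof, so there is no in-paper argument to compare against. Assessed on its own terms, your proposal has the right rough shape (conjugate to something amenable, LU-factor, normalize the diagonal), but it contains several genuine gaps.

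First, Steps~1 and~2 do not connect. A companion matrix has $(1,1)$ entry zero, so the conjugate produced by your cyclic-vector reduction is exactly the kind of matrix for which Gaussian elimination without row interchanges \emph{fails} at the very first pivot. The conjugation in Step~1 therefore does not put $A$ into a form where an LU factorization exists; you would need a separate argument, with a different choice of conjugating matrix, to ensure that all pivots can be taken nonzero. This is in fact the crux of the theorem, not a preliminary step, and your proposal never addresses it. Relatedly, the appeal to ``rational canonical form'' over a noncommutative division ring is not free: one has to set up $D[x]$-module theory carefully, and even granting it you are back to block-companion matrices with the same vanishing-pivot problem.

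Second, the Step~3 normalization, which you yourself flag as the delicate point, is not merely bookkeeping. Turning a diagonal $(d_1,\ldots,d_n)$ of the upper factor into $(1,\ldots,1,d)$ by conjugation and absorption is a Whitehead-lemma phenomenon: one writes $\mathrm{diag}(d_1,\ldots,d_n)$ as $\mathrm{diag}(1,\ldots,1,d_1\cdots d_n)$ times a product of elementary transvections, and those transvections must then be distributed between the $U$ and $V$ factors without destroying triangularity. Simply conjugating by a diagonal matrix, as your sketch suggests, only replaces each $d_i$ by a conjugate of itself and does not collapse the product into the last slot; you need a genuinely different mechanism here.

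Third, the fallback induction is unsound as stated: passing from $A$ noncentral to the block $A'$ does not preserve noncentrality, and a central $A'$ (say $\lambda I_{n-1}$ with $\lambda$ a noncentral scalar conjugate, or $\lambda\ne1$ central) would prevent the inductive hypothesis from applying. More broadly, noncentrality of $A$ is not a cosmetic hypothesis: if $A=\lambda I$ with $\lambda\in Z(D)\setminus\{1\}$, the conclusion is simply false, since conjugation leaves $\lambda I$ fixed and a unit-lower-triangular $U$ forces $V=\lambda U^{-1}$, which cannot be upper triangular with diagonal $(1,\ldots,1,d)$. Your argument never pinpoints where noncentrality enters, which is a warning sign that the decisive step is missing.
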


For singular matrices, we rely on a classical block decomposition, which allows us to separate the invertible and nilpotent components. This decomposition follows directly from \cite[Theorem 15, Page 28]{Bo_Ja_43}.

\begin{lemma}[Theorem 15, \cite{Bo_Ja_43}]\label{singular}  
	Let \( D \) be a division ring and \( n > 1 \) an integer. If \( A \in \mathrm{M}_n(D) \) is singular but not nilpotent, then there exists an invertible matrix \( P \in \mathrm{M}_n(D) \) such that  
	\[
	P^{-1} A P = \begin{pmatrix}
		A_1 & 0 \\ 0 & A_2
	\end{pmatrix},
	\]
	in which \( A_1 \in \mathrm{M}_{n-k}(D) \) is invertible and \( A_2 \in \mathrm{M}_k(D) \) is nilpotent for some positive integer \( k \).
\end{lemma}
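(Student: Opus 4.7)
The plan is to prove this via Fitting's lemma, adapted to the division-ring setting. Viewing $D^{n}$ as a finite-dimensional $D$-vector space on which $A \in \mathrm{M}_{n}(D)$ acts as a $D$-linear endomorphism, I can lean on the standard dimension theory and the rank-nullity theorem for vector spaces over division rings, since both hold without change. This is the only structural ingredient I would rely on beyond pure linear algebra.

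First I would examine the ascending chain of kernels $\ker(A) \subseteq \ker(A^{2}) \subseteq \cdots$ and the descending chain of images $\mathrm{im}(A) \supseteq \mathrm{im}(A^{2}) \supseteq \cdots$ inside $D^{n}$. Both chains stabilize by finite-dimensionality, and rank-nullity shows that the first index $m$ for which $\ker(A^{m}) = \ker(A^{m+1})$ is also the first at which $\mathrm{im}(A^{m}) = \mathrm{im}(A^{m+1})$. I would then verify the decomposition $D^{n} = \ker(A^{m}) \oplus \mathrm{im}(A^{m})$: if $v = A^{m}w$ lies in the intersection, then $A^{2m}w = 0$ forces $w \in \ker(A^{2m}) = \ker(A^{m})$, so $v = 0$, and a dimension count yields the direct sum. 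Both summands are plainly $A$-invariant. The restriction of $A$ to $\ker(A^{m})$ is nilpotent by construction, while on $\mathrm{im}(A^{m})$ it is surjective (since $A \cdot \mathrm{im}(A^{m}) = \mathrm{im}(A^{m+1}) = \mathrm{im}(A^{m})$) and hence invertible on that finite-dimensional space.

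To conclude, the hypothesis that $A$ is singular forces $\ker(A^{m}) \neq 0$, so $k := \dim_{D}\ker(A^{m}) \geq 1$, while the hypothesis that $A$ is not nilpotent forces $\mathrm{im}(A^{m}) \neq 0$, so $n - k \geq 1$. Concatenating a basis of $\mathrm{im}(A^{m})$ with a basis of $\ker(A^{m})$ to obtain a basis of $D^{n}$, and letting $P$ be the corresponding change-of-basis matrix, the conjugate $P^{-1}AP$ assumes the desired block-diagonal shape with invertible $A_{1} \in \mathrm{M}_{n-k}(D)$ and nilpotent $A_{2} \in \mathrm{M}_{k}(D)$. The only spot where the noncommutativity of $D$ might conceivably interfere is in the dimension bookkeeping; since finitely generated vector spaces over a division ring carry a well-defined dimension and satisfy rank-nullity, no genuine obstacle arises, and this is the main subtlety I would flag in the write-up.
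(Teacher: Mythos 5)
Your argument is correct, and it is essentially the standard Fitting decomposition argument; the paper does not supply a proof of this lemma at all but simply cites it from Jacobson's \emph{The Theory of Rings}, so there is no in-text proof to compare against. Your reconstruction is self-contained and sound: viewing $D^n$ as a right $D$-vector space of column vectors on which $A$ acts by left multiplication makes $A$ a (right-)$D$-linear endomorphism, and all the ingredients you invoke — well-defined dimension, rank-nullity, stabilization of the kernel and image chains, the direct-sum decomposition $D^n = \ker(A^m) \oplus \operatorname{im}(A^m)$, nilpotence on the first summand and invertibility on the second — go through verbatim over a division ring. The one point you might state a touch more explicitly is the step from ``$A$ not nilpotent'' to ``$\operatorname{im}(A^m)\neq 0$'': this uses that $A^m\neq 0$, which holds because $A$ is not nilpotent and $m$ is a fixed finite index, not merely that $\ker(A^m)\neq D^n$ for some small power. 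Likewise, ``surjective implies invertible on $\operatorname{im}(A^m)$'' deserves the parenthetical reminder that this is rank-nullity on a finite-dimensional space. With those two sentences filled in, the write-up is complete, and it matches the classical source the paper is pointing to.
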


As for nilpotent matrices, we recall a well-known structural result that every nilpotent matrix over a division ring is similar to a block diagonal matrix composed of Jordan blocks with zero diagonal entries. This is formalized in the next lemma.

\begin{lemma}[Theorem 5, \cite{Pa_Mo_12}]\label{nil}  
	Let \( D \) be a division ring and \( n > 1 \) a positive integer. If \( N \in \mathrm{M}_n(D) \) is nilpotent, then there exists an invertible matrix \( P \in \mathrm{M}_n(D) \) such that  
	\[
	P^{-1} N P = \bigoplus_{i=1}^s J_{m_i}(0),
	\]
	for some positive integers \( m_1, m_2, \dots, m_s \) satisfying \( m_1 + \cdots + m_s = n \). Each block \( J_{m_i}(0) \) denotes the standard Jordan block of size \( m_i \) with zero on the diagonal.
\end{lemma}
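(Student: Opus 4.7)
The plan is to adapt the classical proof of Jordan normal form for nilpotent operators over a field, observing that it transfers essentially unchanged to the division ring setting because every finitely generated $D$-module is free and every short exact sequence of such modules splits. Accordingly, I would view $N$ as a $D$-linear endomorphism of the right $D$-module $V = D^n$ and let $k$ be the nilpotency index of $N$, so that $N^k = 0$ but $N^{k-1} \neq 0$.

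The next step is to set up the ascending kernel filtration
\[
0 = \ker(N^0) \subseteq \ker(N) \subseteq \ker(N^2) \subseteq \cdots \subseteq \ker(N^k) = V,
\]
and to consider the successive quotients $W_i = \ker(N^i)/\ker(N^{i-1})$. Each $W_i$ is finitely generated, hence free, and a short verification shows that $N$ induces an injective $D$-linear map $W_i \to W_{i-1}$ for every $i \geq 2$, because $v \notin \ker(N^{i-1})$ forces $Nv \notin \ker(N^{i-2})$.

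With this filtration in hand, I would construct the Jordan chains from the top down. Choose a basis $w_1, \ldots, w_{r_k}$ of $W_k$ and lift it to vectors in $\ker(N^k) \setminus \ker(N^{k-1})$. For each $w_j$, the sequence $w_j, N w_j, \ldots, N^{k-1} w_j$ generates a $D$-submodule of $V$ on which $N$ acts as the Jordan block $J_k(0)$. At the next level, the images $\{N w_j\}$ form a linearly independent subset of $W_{k-1}$; extend it to a basis of $W_{k-1}$, lift the new elements to $\ker(N^{k-1})$, and repeat the chain construction to obtain Jordan blocks of size $k-1$. Iterating down to $i=1$ produces a family of chains whose sizes $m_1, \ldots, m_s$ sum to $n$.

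The main obstacle is the inductive bookkeeping required to verify that these chains collectively form a direct sum decomposition of $V$. Concretely, at each descent step one must check that the newly chosen lifts, together with the tails of the previously built longer chains, remain $D$-linearly independent modulo the appropriate kernel. This is exactly where the division ring hypothesis is essential: it guarantees that each $W_i$ is free, that linearly independent sets extend to bases, and that the short exact sequences $0 \to \ker(N^{i-1}) \to \ker(N^i) \to W_i \to 0$ split. Once the independence is established at every level, the Jordan basis read off the chains yields the invertible matrix $P \in \mathrm{M}_n(D)$ effecting the similarity $P^{-1} N P = \bigoplus_{i=1}^s J_{m_i}(0)$.
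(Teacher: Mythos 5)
Your proof is correct; note, however, that the paper does not actually prove this lemma, but simply cites it as Theorem~5 of Mohammadian's 2012 paper on sums and products of square-zero matrices. What you have supplied is a self-contained argument via the classical Jordan chain construction for nilpotent operators, and you correctly isolate the only facts about the base ring that are needed: every finitely generated right $D$-module is free, every linearly independent set extends to a basis, and every submodule is a direct summand. All of these hold over an arbitrary division ring, so the argument transfers unchanged from the field case. The kernel filtration, the injectivity of the induced maps $W_i \to W_{i-1}$, and the top-down chain construction are all accurate. If you wished to tighten the final bookkeeping step you flagged, the cleanest formulation is an induction on $i$: the chain vectors lying in $\ker(N^i)$ form a basis of $\ker(N^i)$, because the ones at ``level $i$'' (those of the form $N^{\ell-i}v_\alpha$ for chains of length $\ell \ge i$) project to a basis of $W_i$ by construction, while the remaining chain vectors form a basis of $\ker(N^{i-1})$ by the inductive hypothesis; taking $i = k$ then shows the full collection is a basis of $V$. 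Relative to the paper, your route buys self-containment at the cost of reproducing a standard argument, whereas the paper simply outsources the whole statement to the literature.
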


These structural lemmas provide a solid foundation for the next stage of our argument, where we formulate and prove the main result, Theorem~\ref{di}.

\begin{theorem}\label{di}
	Let $D$ be a division ring with infinite center $F$ and let $n\geq3$ be an integer. If $p\in F[x]$ is a nonconstant polynomial, then every matrix in $\mathrm{M}_n(D)$ can be expressed as a product of  three elements in $p[\mathrm{M}_n(D),\mathrm{M}_n(D)]$.
\end{theorem}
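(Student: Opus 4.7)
The plan is to leverage the fact that conjugation preserves polynomial commutators, $P\,p[X,Y]\,P^{-1}=p[PXP^{-1},PYP^{-1}]$, so the assertion depends only on the similarity class of $A$. I would then split $\mathrm{M}_n(D)$ into four cases: central scalar matrices, nilpotent matrices, singular non-nilpotent matrices, and invertible noncentral matrices, and treat each case using the structural lemmas established earlier in the section.

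The central scalar case is free: since $\lambda I\in\mathrm{M}_n(F)\subseteq\mathrm{M}_n(D)$, Theorem~\ref{field} already supplies a factorization into two polynomial commutators relative to $p$ over $F$, and hence over $D$. The nilpotent case follows from Lemma~\ref{nil}, which produces a similarity to a direct sum of Jordan blocks with zero diagonal; then Theorem~\ref{zero} delivers a single polynomial commutator representation, trivially realizing $A$ as a product of at most three. For the singular non-nilpotent case, Lemma~\ref{singular} splits $A$ up to similarity into $\mathrm{diag}(A_1,A_2)$ with $A_1$ invertible and $A_2$ nilpotent; I would handle $A_2$ as in the nilpotent case and $A_1$ via the invertible noncentral case below (inducting on $n$ with base $n=3$ if needed), then assemble block-diagonally so as not to exceed three factors overall.

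The main case is invertible noncentral. Lemma~\ref{non singular} yields $A\sim UV$ with $U$ unit lower triangular and $V$ upper triangular whose diagonal is $(1,\dots,1,d)$. Theorem~\ref{al2} expresses each of $U$ and $V$ as a product of two polynomial commutators, giving four factors in total, so the essential task is to trim one factor. The strategy is to exhibit a conjugator that drives the diagonal of, say, $U$ to zero, so that Theorem~\ref{zero} realizes $U$ as a single polynomial commutator; combined with the two-factor decomposition of $V$ coming from Theorem~\ref{al2}, this yields the desired three.

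The hard part, I anticipate, is precisely this last reduction. Showing that a unit lower triangular matrix over a noncommutative $D$ is similar to a zero-diagonal matrix relies on the failure of trace-invariance in noncommutative matrix algebras, exactly the phenomenon illustrated by the quaternionic example of Section~\ref{matrix}, and will require constructing the conjugator explicitly from entries of $D$ chosen not to commute with the diagonal of $U$, together with using the infinitude of $F$ to secure enough parameters. A secondary bookkeeping challenge is to orchestrate the block-diagonal assembly in the singular non-nilpotent case so that the separate factorizations of $A_1$ and $A_2$ merge into a single three-term product in $\mathrm{M}_n(D)$ rather than inflating the count.
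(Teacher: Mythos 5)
Your case split (central, nilpotent, singular non-nilpotent, invertible noncentral) matches the paper's, and the first three cases are handled essentially as the paper does. The genuine gap is in the invertible noncentral case, and it is exactly the step you flag as ``the hard part'': you propose to conjugate the unit lower triangular factor $U$ to a matrix with zero diagonal and then invoke Theorem~\ref{zero}. That step is in general impossible. A unit lower triangular matrix is unipotent, so $(U-I)^n=0$, and any matrix similar to it would also have to be unipotent. Over a field of characteristic $0$ this is immediately a contradiction, since a unipotent matrix has trace $n\neq0$ while a zero-diagonal matrix has trace $0$ (and the commutative case $D=F$ is allowed by the hypotheses of Theorem~\ref{di}). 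Even when $D$ is genuinely noncommutative but finite-dimensional over $F$, the reduced trace $\mathrm{Trd}$ is a conjugation-invariant linear functional that vanishes on zero-diagonal matrices but equals $n\cdot\sqrt{\dim_F D}\neq0$ on any unipotent matrix, so the desired conjugator cannot exist (for instance, already for $D=\mathbb{H}$, $n=3$). The failure of ordinary trace-invariance over $D$, which you correctly recall, does not help here because the obstruction comes from a different invariant.

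The paper circumvents this by never trying to flatten the diagonal of $U$. Instead, after reducing to $A=UV$ via Lemma~\ref{non singular}, it picks distinct scalars $\alpha_1,\dots,\alpha_n\in F$ with $\sum\alpha_i=0$ and sets $E=\mathrm{diag}(\alpha_1,\dots,\alpha_n)$, then rewrites $A=(UE)(E^{-1}V)$. Since $UE$ is triangular with distinct central diagonal entries, Lemma~\ref{cheo hoa} makes it similar to $E$, and $E$ is a noncentral traceless matrix over $F$, hence a single polynomial commutator by Corollary~\ref{pooo}. Meanwhile $E^{-1}V$ is still triangular, so Theorem~\ref{al2} writes it as a product of two polynomial commutators, giving three in total. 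That insertion of $E$ and $E^{-1}$ is the missing idea, and replacing your conjugation step with it is what makes the argument go through.
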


\begin{proof}
	Let \( A \in \mathrm{M}_n(D) \). If \( A \) is central, then it must be of the form \( A = \lambda \mathrm{I}_n \) for some \( \lambda \in F \), where \( \mathrm{I}_n \) denotes the identity matrix of size \( n \). By Theorem~\ref{field}, such a matrix can be written as a product \( A = BC \) of two polynomial commutators in \( \mathrm{M}_n(F) \). Applying Theorem~\ref{field} again, the matrix \( C \) can itself be expressed as a product of two polynomial commutators in \( \mathrm{M}_n(F) \). Therefore, \( A \) is a product of three polynomial commutators in \( \mathrm{M}_n(D) \).
	
	We now consider the case where \( A \) is noncentral. If \( A \) is invertible, then by Lemma~\ref{non singular}, there exists an invertible matrix \( P \in \mathrm{M}_n(D) \) such that $P^{-1} A P = UV,$
	where \( U \) is a lower triangular matrix with all diagonal entries equal to 1, and \( V \) is an upper triangular matrix with diagonal entries \( 1, 1, \dots, 1, d \), for some \( d \in D \setminus \{0\} \). Since the property of being a product of polynomial commutators is preserved under similarity, it suffices to consider \( A = UV \).
	
	As \( F \) is infinite, we can choose distinct elements \( \alpha_1, \alpha_2, \ldots, \alpha_n \in F \) whose sum is zero, and define \( E = \mathrm{diag}(\alpha_1, \alpha_2, \ldots, \alpha_n) \). Then \( E \) is noncentral, and by Corollary~\ref{pooo}, it is a polynomial commutator in \( \mathrm{M}_n(F) \) relative to \( p \). Define \( U_1 = UE \) and \( V_1 = E^{-1}V \), so that \( A = U_1 V_1 \). By Lemma~\ref{cheo hoa}, \( U_1 \) is similar to \( E \), and hence it is also a polynomial commutator in \( \mathrm{M}_n(D) \) relative to \( p \). By Theorem~\ref{al2}, \( V_1 \) can be written as a product of two polynomial commutators in \( \mathrm{M}_n(D) \) relative to \( p \). Thus, \( A \) is again a product of three polynomial commutators.
	
	Next, suppose that \( A \) is nilpotent. Then, by Lemma~\ref{nil} and Corollary~\ref{pooo}, the desired conclusion holds. Finally, if \( A \) is singular but not nilpotent, then Lemma~\ref{singular} guarantees the existence of an invertible matrix \( P \in \mathrm{M}_n(D) \) such that
	\[
	P^{-1} A P = \begin{pmatrix}
		A_1 & 0 \\
		0 & A_2
	\end{pmatrix},
	\]
	where \( A_1 \in \mathrm{M}_{n-k}(D) \) is invertible and \( A_2 \in \mathrm{M}_k(D) \) is nilpotent for some positive integer \( k \). Applying the previous arguments to \( A_1 \) and \( A_2 \) separately completes the proof.
\end{proof}

An analytic perspective on triangular matrices over algebras is highlighted in \cite[Example~3.4]{Pa_BreGaThi_2025}. Let \( A \) be an AW*-algebra of type \( \mathrm{I}_n \) with \( n \geq 3 \), as defined in \cite[Definition~18.2]{Bo_Be_72}. A classical result in the theory of \( C^* \)-algebras asserts that such an algebra \( A \) is *-isomorphic to \( \mathrm{M}_n(C(X)) \), where \( X \) is an extremally disconnected compact Hausdorff space, and \( C(X) \) denotes the algebra of complex-valued continuous functions on \( X \). Recall that a space is said to be \emph{extremally disconnected} if the closure of every open subset is open.

Given any element \( a \in A \cong \mathrm{M}_n(C(X)) \), a result of Deckard and Pearcy \cite[Theorem~2]{Pa_DePe_63} ensures the existence of a unitary matrix \( u \in \mathrm{M}_n(C(X)) \) such that the conjugate \( u a u^* \) is upper triangular. A more conceptual argument for this fact was later given in \cite[Corollary~6]{Pa_Az_74}. By applying Theorem~\ref{al2} to the triangular matrix \( u a u^* \), we deduce that it is a product of two polynomial commutators. Since the conjugation by a unitary matrix preserves this property (thanks to \( u^{-1} = u^* \) and the closure of polynomial commutators under conjugation), the same holds for \( a \) itself. This yields the following result, Theorem~\ref{AW}.

\begin{theorem}\label{AW}
	Let $A$ be an AW*-algebra of type $\mathrm{I}_n$ with $n\geq3$ and let $p\in\mathbb{C}[x]$ be a nonconstant polynomial. Then, every element of $A$ can be written as a product of two elements in $p[A,A]$.
\end{theorem}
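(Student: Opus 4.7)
The plan is to reduce Theorem~\ref{AW} to Theorem~\ref{al2} by means of the structural theorem for AW*-algebras of type $\mathrm{I}_n$ together with a simultaneous triangularization result, then to transfer the resulting factorization across a unitary conjugation. First, I would invoke the classical identification $A \cong \mathrm{M}_n(C(X))$, where $X$ is an extremally disconnected compact Hausdorff space and $C(X)$ is the unital commutative $\mathbb{C}$-algebra of continuous complex-valued functions on $X$. Under this *-isomorphism, an arbitrary element $a \in A$ is realized as a matrix in $\mathrm{M}_n(C(X))$, to which the triangularization results of Deckard--Pearcy \cite[Theorem 2]{Pa_DePe_63} (alternatively \cite[Corollary 6]{Pa_Az_74}) apply: there is a unitary $u \in \mathrm{M}_n(C(X))$ such that $T := uau^{*}$ is upper triangular.

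Next, I would apply Theorem~\ref{al2} with $R = C(X)$ and $F = \mathbb{C}$. Since $\mathbb{C}$ is infinite and $C(X)$ is a unital associative $\mathbb{C}$-algebra, the hypotheses of Theorem~\ref{al2} are met (noting $n \geq 3$), so the triangular matrix $T$ admits a factorization
\[
T = p[X_1,Y_1]\cdot p[X_2,Y_2]
\]
for suitable $X_i, Y_i \in \mathrm{M}_n(C(X))$. It remains to pull this factorization back to $a$ through the unitary $u$. For this I would record the conjugation identity that has already been used implicitly in the proof of Theorem~\ref{gener}: for any invertible $g$ and any $x,y$,
\[
g\, p[x,y]\, g^{-1} \;=\; p\!\bigl(gxy g^{-1}\bigr) - p\!\bigl(gyx g^{-1}\bigr) \;=\; p[\,gxg^{-1},\, gyg^{-1}\,],
\]
which shows that the set $p[\mathrm{M}_n(C(X)),\mathrm{M}_n(C(X))]$ is closed under conjugation by invertible (in particular unitary) elements. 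Applying $u^{*}(\cdot)u$ to the factorization of $T$ and using $u^{-1} = u^{*}$, we obtain
\[
a \;=\; u^{*}Tu \;=\; p[u^{*}X_1u,\, u^{*}Y_1u]\cdot p[u^{*}X_2u,\, u^{*}Y_2u],
\]
which exhibits $a$ as a product of two elements of $p[A,A]$.

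The argument is largely a bookkeeping exercise once the three ingredients (the structure theorem for AW*-algebras of type $\mathrm{I}_n$, simultaneous triangularization over $C(X)$, and Theorem~\ref{al2}) are aligned; the only point requiring minor care is to verify that Theorem~\ref{al2} really applies to the coefficient ring $C(X)$, which amounts to checking that the proof of that theorem uses nothing about $R$ beyond being a unital associative algebra over the infinite field $\mathbb{C}$. The main conceptual obstacle I anticipate is ensuring that the triangularization step is genuinely available in the AW*-setting rather than only in the von Neumann or type $\mathrm{I}$ factor setting; this is precisely why the extremally disconnected hypothesis on $X$, and hence Deckard--Pearcy (or Azoff's reformulation), is essential. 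No estimates on norms enter here, so the proof does not interact with the material of Section~\ref{how big}.
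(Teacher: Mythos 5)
Your proposal is correct and follows essentially the same route as the paper: identify $A$ with $\mathrm{M}_n(C(X))$, triangularize via Deckard--Pearcy (or Azoff), apply Theorem~\ref{al2} over the coefficient algebra $C(X)$, and transfer the factorization back by unitary conjugation using the invariance $g\,p[x,y]\,g^{-1} = p[gxg^{-1}, gyg^{-1}]$. You even flag the same two points the paper takes care with, namely that $C(X)$ need only be a unital associative $\mathbb{C}$-algebra for Theorem~\ref{al2} to apply, and that triangularization requires $X$ extremally disconnected.
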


As mentioned in Section~\ref{intro}, we continue Question~\ref{what} with linear spans generated by $p[R,R]$. We recall Question~\ref{que1}, which asks whether the inclusion  
\[
p[R,R] \supseteq \{a \in R \mid \mathrm{trace}(a) = 0\}
\]
holds. From Theorem~\ref{zero}, we recall that for matrix rings, any matrix similar to one with all diagonal entries zero can be written as a polynomial commutator relative to \( p \), and such matrices form a subset of traceless matrices. It is known from \cite{Pa_Al_57, Pa_Sho_36} that in matrix rings over fields, the set of traceless matrices coincides with the additive commutator subgroup, namely $[R, R] = \{a \in R \mid \mathrm{trace}(a) = 0\}.$ This naturally leads to the question of whether \([R, R] \subseteq p[R, R]\). Under milder assumptions, the set \([R, R]\) can be interpreted as $\mathrm{span}\,[R,R]$, the linear span  of additive commutators. This naturally leads to a relaxed version of our earlier question: does the inclusion $\mathrm{span}\,[R,R] \subseteq \mathrm{span}\ p[R, R]$ hold, where \(\mathrm{span}\ p[R, R]\) denotes the linear span of all elements of the form \(p(ab) - p(ba)\)? Interestingly, this version can be addressed by a result from \cite[Corollary 2.6]{Pa_Bre_20}, which we recall below for convenience.

Adapted from \cite[Corollary 2.6]{Pa_Bre_20}, the following result offers a useful perspective. 

\begin{lemma}[Corollary 2.6, \cite{Pa_Bre_20}] \label{key}
	Let \( A \) be an associative simple algebra over an infinite field \( F \), and let \( f \) be a polynomial in noncommuting variables with coefficients in \( F \). If \( f \) is neither a polynomial identity nor a central polynomial of \( A \), then 
	\[
	\mathrm{span}\ [A, A] \subseteq \mathrm{span}\ f(A).
	\]
\end{lemma}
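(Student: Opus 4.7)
The plan is to reduce to the case where $f$ is multilinear, exhibit $\mathrm{span}\ f(A)$ as a Lie ideal of $A$ via a Leibniz-type identity, and then invoke the classical Herstein theorem on noncentral Lie ideals of simple algebras.

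First, I would replace $f$ by its full multilinearization $\tilde f$. Since $F$ is infinite, the standard Vandermonde-based polarization argument ensures that $\tilde f$ is a polynomial identity (respectively, a central polynomial) of $A$ if and only if $f$ is, and that $\mathrm{span}\ \tilde f(A)\subseteq \mathrm{span}\ f(A)$. Consequently, it suffices to prove the inclusion for $\tilde f$, and henceforth I may assume $f(x_1,\ldots,x_n)$ is multilinear of degree $n$.

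Set $V=\mathrm{span}\ f(A)$. For any $a_1,\ldots,a_n,y\in A$, applying the inner derivation $d_y\colon x\mapsto [x,y]$ term by term to each monomial of $f$ and using the Leibniz rule yields the key identity
\[
[f(a_1,\ldots,a_n),\,y] \;=\; \sum_{i=1}^{n} f\bigl(a_1,\ldots,a_{i-1},[a_i,y],a_{i+1},\ldots,a_n\bigr),
\]
which lies in $V$ by the multilinearity of $f$. Taking linear combinations shows $[V,A]\subseteq V$, so that $V$ is a Lie ideal of $A$. Since $f$ is not a central polynomial of $A$, the Lie ideal $V$ is not contained in $Z(A)$, and Herstein's classical theorem on Lie ideals of simple algebras then gives $[A,A]\subseteq V$, from which $\mathrm{span}\ [A,A]\subseteq V=\mathrm{span}\ f(A)$ follows at once.

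The main obstacle I anticipate is the characteristic-sensitive fine print: the polarization procedure requires denominators that are not divisible by $\mathrm{char}(F)$, and Herstein's theorem has classical exceptions in characteristic $2$ and for certain very small simple algebras. Since $F$ is infinite, the polarization issue can be handled by choosing enough distinct scalars, but a complete argument would need to either exclude the exceptional Herstein cases or verify the inclusion directly in those settings; this is the technical heart of the proof and is where Bre\v{s}ar's original derivation does the real work.
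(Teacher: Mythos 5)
The paper does not prove this lemma; it is stated as a direct citation of Bre\v{s}ar's Corollary~2.6, so there is no in-paper argument to compare against. Your strategy — multilinearize $f$, exhibit $V = \mathrm{span}\, f(A)$ as a Lie ideal via the derivation identity
\[
[f(a_1,\ldots,a_n),\,y] = \sum_{i=1}^{n} f\bigl(a_1,\ldots,[a_i,y],\ldots,a_n\bigr),
\]
and then invoke Herstein's theorem on noncentral Lie ideals of simple rings — is the natural route and is in the same spirit as Bre\v{s}ar's development, and the Lie-ideal step is correct as written for multilinear $f$.

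That said, the two issues you flag at the end are genuine gaps and not merely fine print, so the proposal is incomplete for the lemma's full stated generality. First, the reduction to multilinear $f$ is not automatic over infinite fields of positive characteristic: full linearization of a multihomogeneous component of multidegree $(d_1,\ldots,d_k)$ recovers $d_1!\cdots d_k!\,f$ upon respecialization, so if $\mathrm{char}\,F$ divides one of the $d_i!$ then the multilinearization $\tilde f$ may be a polynomial identity (or central) even though $f$ is not; in that event $V$ could collapse into $Z(A)$ or to $\{0\}$ and the argument stalls before Herstein's theorem is even applicable. Second, Herstein's theorem on Lie ideals genuinely excludes the case where $A$ has characteristic $2$ and is $4$-dimensional over its center, where noncentral proper Lie ideals not containing $[A,A]$ do exist; the lemma as stated carries no such exclusion, so that case must be disposed of separately. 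Acknowledging that ``this is where Bre\v{s}ar's original derivation does the real work'' is candid, but it leaves the proof unfinished exactly at the steps that separate the claimed result from the easy case ($\mathrm{char}\,F = 0$ and $\dim_{Z(A)} A > 4$). To close the argument you would need to handle multihomogeneous $f$ without dividing by factorials (for instance by working in the relatively free algebra as Bre\v{s}ar does) and to treat or exclude the exceptional Herstein case directly.
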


This inclusion will be instrumental in confirming the corollary that follows.

\begin{corollary}\label{simple}
	If $R$ is a unital associative simple algebra over an infinite field $F$ and $p\in F[x]$ is a nonconstant polynomial, then $\mathrm{span}\,[R,R] \subseteq \mathrm{span}\ p[R, R]$.
\end{corollary}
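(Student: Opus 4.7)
The plan is to invoke Lemma~\ref{key} applied to the two-variable noncommutative polynomial $f(x_1,x_2)= p(x_1 x_2)-p(x_2 x_1)\in F\langle x_1,x_2\rangle$, so that the task reduces to showing that $f$ is neither a polynomial identity nor a central polynomial of $R$. When $R$ is commutative the claimed inclusion is vacuous since $\mathrm{span}\,[R,R]=\{0\}$, so I will assume $R$ is noncommutative. An observation used throughout is that $f$ is nonzero in the free algebra: writing $p=\sum_{k=0}^{n}c_k x^k$ with $c_n\ne 0$, the top-degree contributions $c_n(x_1x_2)^n$ and $-c_n(x_2x_1)^n$ are distinct monomials and so cannot cancel.

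The crux of the plan is a common reduction for both forbidden conditions. If $f$ is itself a polynomial identity of $R$, then $R$ is trivially a PI-algebra; if $f$ is a central polynomial, then $[f(x_1,x_2),x_3]$ is a nonzero polynomial identity of $R$ (nonzero because, for any monomial $M$ appearing in $f$, the distinct monomials $Mx_3$ and $x_3M$ appear in $[f,x_3]$ with opposite signs and cannot cancel). In either case $R$ is a simple PI-algebra, so Kaplansky's theorem makes $R$ finite-dimensional central simple over $K:=Z(R)$, and $K$ is infinite since $F\subseteq K$. The scalar extension therefore satisfies $R\otimes_K\overline{K}\cong\mathrm{M}_m(\overline{K})$ for some $m\ge 2$ (the case $m=1$ would force $R$ to be commutative), and a standard fact in PI-theory, proved via total multilinearisation combined with the infiniteness of $K$, guarantees that polynomial-identity and central-polynomial status transfer from $R$ to $R\otimes_K\overline{K}$. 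I view this transfer step as the most delicate part, since $f$ is not multilinear in its two variables and requires the multilinearisation argument to be set up carefully.

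Having reduced to $\mathrm{M}_m(\overline{K})$ with $m\ge 2$, the final step is a direct computation. Setting $a=e_{12}$ and $b=\alpha e_{21}$ for a parameter $\alpha\in\overline{K}$, one has $ab=\alpha e_{11}$ and $ba=\alpha e_{22}$, and using $e_{ii}^k=e_{ii}$ for $k\ge 1$ gives
\[
f(a,b)\;=\;\Bigl(\sum_{k=1}^{n}c_k\alpha^k\Bigr)(e_{11}-e_{22}).
\]
Since $\sum_{k=1}^{n}c_k t^k$ is a nonzero polynomial in $\overline{K}[t]$ and $\overline{K}$ is infinite, some $\alpha$ makes the scalar coefficient nonzero. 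The resulting $f(a,b)$ is then simultaneously nonzero and not a scalar matrix, hence not central, which rules out the two forbidden possibilities at once and verifies the hypotheses of Lemma~\ref{key}, completing the proof.
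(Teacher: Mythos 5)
Your proposal is correct and uses the same key lemma as the paper — Lemma~\ref{key} applied to $f(x_1,x_2)=p(x_1x_2)-p(x_2x_1)$ and the observation $\mathrm{span}\,p[R,R]=\mathrm{span}\,f(R)$ — but you do substantially more work: the paper's proof simply sets up $f$ and cites Lemma~\ref{key}, leaving the hypothesis that $f$ is neither a polynomial identity nor a central polynomial of $R$ unaddressed, whereas you verify it explicitly. Your route is sound: you may assume $R$ noncommutative (otherwise $\mathrm{span}\,[R,R]=\{0\}$ and the inclusion is trivial); you note $f\ne 0$ in the free algebra via the top-degree monomials $c_n(x_1x_2)^n$ and $-c_n(x_2x_1)^n$; if $f$ were an identity or central, then $R$ would satisfy the nonzero identity $f$ or $[f,x_3]$, Kaplansky makes $R$ finite-dimensional central simple over the infinite field $K=Z(R)$, scalar extension gives $R\otimes_K\overline{K}\cong\mathrm{M}_m(\overline{K})$ with $m\ge 2$, and the computation $f(e_{12},\alpha e_{21})=\bigl(\sum_{k\ge 1}c_k\alpha^k\bigr)(e_{11}-e_{22})$ with $\alpha$ chosen so the scalar is nonzero yields a nonzero, noncentral value, contradicting both forbidden cases at once. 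The step you flag as delicate — transfer of identity/central-polynomial status under scalar extension — is a standard multilinearisation fact over an infinite field and needs no special care here. In short, same key lemma and same reduction, but your write-up supplies the verification of the lemma's hypotheses that the paper treats as implicit.
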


\begin{proof}
	Let \( f = p(x_1x_2) - p(x_2x_1) \) be a polynomial in noncommuting variables \( x_1, x_2 \) with coefficients in \( F \). Observe that \( R \) is a simple \( F \)-algebra and that \( \mathrm{span}\,p[R, R] \) coincides with \( \mathrm{span}\,f(R) \). The conclusion then follows directly from Lemma~\ref{key}.
\end{proof}

Keeping Corollary~\ref{simple} in mind, we now turn to the structural question of whether the matrix algebra can be decomposed as a sum of its center and the subspace generated by polynomial commutators. To approach this, we draw upon the following lemma.

\begin{lemma}\label{dd}
	Let \( D \) be a division ring with infinite center \( F \), and let \( n \) be a positive integer. Suppose that one of the following conditions holds:
	\begin{enumerate}[\rm (i)]
		\item \( D \) is an algebraic division ring of characteristic zero.
		\item \( D \) is finite-dimensional over \( F \), where \( F \) has characteristic \( p > 0 \), and \( p \) does not divide \( \sqrt{\dim_F D} \) or \( n \).
	\end{enumerate} Then,
	\[
	\mathrm{M}_n(D) = Z(\mathrm{M}_n(D)) + [\mathrm{M}_n(D), \mathrm{M}_n(D)] = \{a+b\mid a\in Z(\mathrm{M}_n(D)), b\in [\mathrm{M}_n(D), \mathrm{M}_n(D)]\},
	\]where \( Z(\mathrm{M}_n(D)) \) denotes the center of \( \mathrm{M}_n(D) \). 
\end{lemma}

\begin{proof}
	Recall that a division ring \( D \) is called \emph{algebraic} if every element of \( D \) satisfies a nonzero polynomial with coefficients in the center of \( D \). Part (i) follows directly from \cite[Lemma 5.8]{Pa_DuHaSo_24}. Specifically, \cite[Lemma 5.8]{Pa_DuHaSo_24} shows that if \( D \) is an algebraic division ring of characteristic zero, then the matrix algebra \( \mathrm{M}_n(D) \) decomposes as
	\[
	\mathrm{M}_n(D) = Z(\mathrm{M}_n(D)) + [\mathrm{M}_n(D), \mathrm{M}_n(D)].
	\]
	
	Now, considering part (ii), we turn to the case of positive characteristic, which is addressed in \cite[Theorem 5]{Pa_ChuLee_79}. Since \( p \) does not divide \( \sqrt{\dim_F D} \), we can apply \cite[Theorem 5]{Pa_ChuLee_79}, which asserts $D = F + [D, D].$
	Using this result in conjunction with \cite[Theorems 1 and 3]{Pa_ChuLee_79}, we can then conclude	
	$\mathrm{M}_n(D) = \mathrm{M}_n(F) + [\mathrm{M}_n(D), \mathrm{M}_n(D)].$
	Therefore, for any matrix \( A \in \mathrm{M}_n(D) \), we can express it as the sum of a matrix \( B \in \mathrm{M}_n(F) \) and a matrix \( C \in [\mathrm{M}_n(D), \mathrm{M}_n(D)] \). Since \( p \) does not divide \( n \), the matrix \( A \) can be written as
	\[
	A = \left( B - \frac{1}{n} \mathrm{trace}(B) \mathrm{I}_n \right) + \frac{1}{n} \mathrm{trace}(B) \mathrm{I}_n + C.
	\]	
	Moreover, since the matrix \( B - \frac{1}{n} \mathrm{trace}(B)\mathrm{I}_n \) is traceless, it follows from \cite[Lemma 2]{Pa_ChuLee_79} that
	$B - \frac{1}{n} \mathrm{trace}(B) \mathrm{I}_n \in [\mathrm{M}_n(D), \mathrm{M}_n(D)],$
	which implies that	 $A \in Z(\mathrm{M}_n(D)) + [\mathrm{M}_n(D), \mathrm{M}_n(D)].$
	Thus, we conclude
	$\mathrm{M}_n(D) = Z(\mathrm{M}_n(D)) + [\mathrm{M}_n(D), \mathrm{M}_n(D)].$
	This completes the proof.	
\end{proof}

By combining Lemma~\ref{dd} with Corollary~\ref{simple}, we can directly establish the following theorem (Theorem~\ref{linear p}), so the proof is omitted.

\begin{theorem}\label{linear p}
	Let \( D \) be a division ring with infinite center \( F \), and let \( n \) be a positive integer. The following  holds:
	$\mathrm{M}_n(D) = Z(\mathrm{M}_n(D)) + \mathrm{span}\ p[\mathrm{M}_n(D),\mathrm{M}_n(D)],$ in either of the following cases:
	\begin{enumerate}[\rm (i)]
		\item \( D \) is an algebraic division ring of characteristic zero.
		\item \( D \) is finite-dimensional over \( F \), where \( F \) has characteristic \( p > 0 \), and \( p \) does not divide \( \sqrt{\dim_F D} \) or \( n \).
	\end{enumerate} 	
\end{theorem}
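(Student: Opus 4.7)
The plan is to deduce Theorem~\ref{linear p} as a direct combination of Lemma~\ref{dd} and Corollary~\ref{simple}. Lemma~\ref{dd} already supplies the additive decomposition
\[
\mathrm{M}_n(D) = Z(\mathrm{M}_n(D)) + [\mathrm{M}_n(D), \mathrm{M}_n(D)]
\]
under either hypothesis (i) or (ii), so the remaining task is to promote the commutator summand \([\mathrm{M}_n(D), \mathrm{M}_n(D)]\) to the linear span of polynomial commutators, and Corollary~\ref{simple} is precisely the tool designed for this.

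First, I would verify that Corollary~\ref{simple} applies with \(R = \mathrm{M}_n(D)\): the matrix ring over a division ring is unital and simple, while the hypothesized infiniteness of the center \(F\) is built into the statement of Theorem~\ref{linear p}. Applying Corollary~\ref{simple} then yields the chain
\[
[\mathrm{M}_n(D), \mathrm{M}_n(D)] \subseteq \mathrm{span}\,[\mathrm{M}_n(D), \mathrm{M}_n(D)] \subseteq \mathrm{span}\ p[\mathrm{M}_n(D), \mathrm{M}_n(D)].
\]
Second, for any \(A \in \mathrm{M}_n(D)\), Lemma~\ref{dd} furnishes a decomposition \(A = z + c\) with \(z \in Z(\mathrm{M}_n(D))\) and \(c \in [\mathrm{M}_n(D), \mathrm{M}_n(D)]\); the chain above lets me rewrite \(c\) as an element of \(\mathrm{span}\ p[\mathrm{M}_n(D), \mathrm{M}_n(D)]\), so \(A \in Z(\mathrm{M}_n(D)) + \mathrm{span}\ p[\mathrm{M}_n(D), \mathrm{M}_n(D)]\), which is exactly the desired conclusion.

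The one delicate point that needs attention, rather than a genuine obstacle, is the implicit hypothesis hidden inside Corollary~\ref{simple}: namely, that the noncommutative polynomial \(p(x_1 x_2) - p(x_2 x_1)\) is neither a polynomial identity nor a central polynomial of \(\mathrm{M}_n(D)\). The non-identity property follows from Theorem~\ref{division1} applied to the division ring \(D\), which already sits inside \(\mathrm{M}_n(D)\) as diagonal scalar matrices (or, more directly, from Theorem~\ref{zz} when \(D\) is noncommutative). Non-centrality can be read off from Corollary~\ref{pooo} when \(n>1\), since that result exhibits noncentral traceless matrices inside the subring \(\mathrm{M}_n(F)\) as polynomial commutators, and from Theorem~\ref{ss} when the noncentrality must be detected inside \(D\) itself, because a maximal subfield of a noncommutative \(D\) strictly contains \(F\). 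Once this routine verification is carried out, the proof reduces to a one-line combination of the two cited results, which is presumably why the authors chose to omit it.
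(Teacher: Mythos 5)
Your proof is correct and matches the paper's approach exactly: the paper explicitly states that Theorem~\ref{linear p} follows by combining Lemma~\ref{dd} with Corollary~\ref{simple} and omits the details on that ground. Your extra paragraph verifying that \(p(x_1 x_2) - p(x_2 x_1)\) is neither a polynomial identity nor a central polynomial of \(\mathrm{M}_n(D)\) (via Theorem~\ref{division1}, Theorem~\ref{zz}, Corollary~\ref{pooo}, and Theorem~\ref{ss}) is a worthwhile addition, since that hypothesis of Lemma~\ref{key} is left implicit in the paper's statement and proof of Corollary~\ref{simple}.
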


\section{How big can the polynomial commutator of two matrices be?}\label{how big}

As outlined in Section~\ref{intro}, we now turn to Questions~\ref{estimate} and~\ref{estimate1}, which concern the extent of noncommutativity in polynomial expressions involving matrices. Commutators of matrices have been central to developments in linear algebra, operator theory, and quantum mechanics. A classical result in this area is the Böttcher--Wenzel inequality \cite[Theorem 4.1, page 225]{Pa_BoWe_05}, which asserts that for any \( A, B \in \mathrm{M}_n(\mathbb{C}) \), it follows that
\begin{equation}
	\|[A, B]\|_F^2 \leq 2 \|A\|_F^2 \|B\|_F^2.\label{inequality}
\end{equation}
This inequality \eqref{inequality} has inspired a broad body of work investigating commutator-type expressions, including \( q \)-deformations \cite{Pa_ChKiOhSi_22}, higher-order analogues \cite{Pa_CaCeSo_95}, and noncommutative polynomials~\cite{Pa_Se_16}.

In this section, our focus is on polynomial commutators. Our aim is to measure the failure of commutativity in such expressions using three analytic tools: the Frobenius norm $\|\cdot\|_F$, the numerical radius $h(\cdot)$ and the operator norm $\|\cdot\|_{2}$.	

\begin{theorem}
	If $A,B \in \mathrm{M}_n(\mathbb{C})$ and 
	$\displaystyle p(x) = \sum_{k=0}^d a_k x^k \in \mathbb{C}[x]$
	is a nonconstant polynomial  of degree \( d\geq1 \),  then
	\begin{enumerate}[\rm (i)]
		\item $\displaystyle \|p[A,B]\|_F \le \|[A,B]\|_F \sum_{k=1}^d |a_k| \, k \, \|A\|_F^{k-1}\, \|B\|_F^{k-1};$
		\item $\displaystyle h\big(p[A,B]\big) \le h\big([A,B]\big) \sum_{k=1}^d |a_k| \, k \, 2^{2k-1} \, h(A)^{k-1} h(B)^{k-1}$.
	\end{enumerate}
\end{theorem}

\begin{proof}
	From~\eqref{te} we have the standard expansion for the polynomial commutator:
	\[
	p[A,B]
	= \sum_{k=1}^d a_k\big((AB)^k - (BA)^k\big)
	= \sum_{k=1}^d a_k\sum_{j=0}^{k-1} (AB)^j [A,B] (BA)^{k-1-j}.
	\] We begin with the Frobenius norm.  
	Recall that $\|\cdot\|_F$ is submultiplicative,
	\[
	\|XYZ\|_F \le \|X\|_F\,\|Y\|_F\,\|Z\|_F,
	\qquad X,Y,Z\in\mathrm{M}_n(\mathbb{C}),
	\]
	and that
	\[
	\|AB\|_F\le \|A\|_F\|B\|_F,
	\qquad
	\|BA\|_F\le \|A\|_F\|B\|_F.
	\]
	Moreover, these elementary estimates allow us to control each term of the inner sum:
	for every \(0\le j\le k-1\),
	\[
	\|(AB)^j[A,B](BA)^{k-1-j}\|_F
	\le
	\|[A,B]\|_F\,\|A\|_F^{k-1}\|B\|_F^{k-1}.
	\]
	Since there are exactly \(k\) such terms, summing over \(j\) yields
	\[\begin{aligned}
	    \|p[A,B]\|_F
	&\le
	\sum_{k=1}^d |a_k|
	\sum_{j=0}^{k-1}
	\|(AB)^j[A,B](BA)^{k-1-j}\|_F\\
	&\le
	\|[A,B]\|_F
	\sum_{k=1}^d |a_k|\, k\,\|A\|_F^{k-1}\|B\|_F^{k-1}.
	\end{aligned}
	\] We now turn to the numerical radius.  
	Using the inequalities
	\[
	h(X) \le \|X\|_{2} \le 2h(X),\quad X\in\mathrm{M}_n(\mathbb{C}) 
	\quad\text{(see \cite[Theorem 1.3--1, p.~9]{Bo_Gu_97})},
	\]
	together with
	\[
	h(XYZ)\le \|X\|_{2}\; 2h(Y)\; \|Z\|_{2},\qquad X,Y,Z\in\mathrm{M}_n(\mathbb{C}),
	\]
	we obtain, for each \(j\),
	\[
	h\!\big((AB)^j[A,B](BA)^{k-1-j}\big)
	\le
	2^{\,2k-1}\, h([A,B])\, h(A)^{k-1} h(B)^{k-1}.
	\]
	Summing over the \(k\) values of \(j\) now gives
	\[
	\sum_{j=0}^{k-1}
	h\!\big((AB)^j[A,B](BA)^{k-1-j}\big)
	\le
	k\,2^{\,2k-1}\, h([A,B])\,h(A)^{k-1}h(B)^{k-1},
	\]
	and the desired bound for \(h(p[A,B])\) follows immediately.
\end{proof}

Drawing on the probabilistic perspective of~\cite{Pa_Se_16}, note first that the operator norm of \( A \in \mathrm{M}_n(\mathbb{C}) \) measures how much \( A \) can “stretch” a unit vector in the worst case:
\[
\|A\|_{2} = \sup_{\|v\| = 1} \|Av\|.
\]
By contrast, the Frobenius norm reflects an average effect over the entire sphere. In fact, one can show
\[
\|A\|_F^2 = n\,\mathbb{E}_{\|v\| = 1}[\|Av\|^2] = n \int_{\|v\| = 1} \|Av\|^2\, d\sigma(v),
\]
where \( \sigma \) is the uniform probability measure on the unit sphere. Thus, rather than approximating an expectation in a random-matrix model, we have an exact “spherical average.” Before diving into our main results, we will establish the following theorem that derives this very integral formula for \( \|A\|_F \). Although this result may be known, we are not aware of a specific reference. For completeness, we therefore include a brief proof here.

\begin{theorem}\label{mmm}
	If \( A \in \mathrm{M}_n(\mathbb{C}) \), then the Frobenius norm squared of \(A\) is given by
	\[
	\|A\|_F^2 = n \int_{\|v\|=1} \|Av\|^2 \, d\sigma(v),
	\]
	where \( \sigma \) denotes the normalized uniform (Haar) probability measure on the unit sphere \( S^{2n-1} \subseteq \mathbb{C}^n \).
\end{theorem}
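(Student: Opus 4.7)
The plan is to expand $\|Av\|^2$ in coordinates and exploit the unitary invariance of the normalized measure $\sigma$ on $S^{2n-1}$. Writing $A^{*}A=(c_{ij})$, we have
\[
\|Av\|^{2}=\langle A^{*}Av,v\rangle=\sum_{i,j=1}^{n}c_{ij}\,\overline{v_{i}}\,v_{j},
\]
so by linearity of the integral the problem reduces to evaluating the second moments
\[
M_{ij}:=\int_{\|v\|=1}\overline{v_{i}}\,v_{j}\,d\sigma(v),\qquad 1\le i,j\le n.
\]

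The key step I would carry out is to show $M_{ij}=\tfrac{1}{n}\delta_{ij}$ purely from the symmetry of $\sigma$. First, $\sigma$ is invariant under every unitary $U\in U(n)$. Applying this to the diagonal phase matrix $U=\mathrm{diag}(e^{i\theta_{1}},\dots,e^{i\theta_{n}})$ gives the identity $M_{ij}=e^{i(\theta_{j}-\theta_{i})}M_{ij}$ for all choices of $\theta_{1},\dots,\theta_{n}\in\mathbb{R}$, which forces $M_{ij}=0$ whenever $i\neq j$. Next, applying a permutation unitary shows that $M_{ii}$ is independent of $i$. Finally, since $\sum_{i=1}^{n}|v_{i}|^{2}=1$ on the sphere, integrating gives $\sum_{i=1}^{n}M_{ii}=1$, hence $M_{ii}=\tfrac{1}{n}$.

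Substituting back into the expansion yields
\[
\int_{\|v\|=1}\|Av\|^{2}\,d\sigma(v)=\sum_{i,j}c_{ij}M_{ji}=\frac{1}{n}\sum_{i=1}^{n}c_{ii}=\frac{1}{n}\,\mathrm{trace}(A^{*}A)=\frac{1}{n}\|A\|_{F}^{2},
\]
and multiplying through by $n$ produces the claimed identity.

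I do not expect any serious obstacle here: the argument is a clean application of Schur-type orthogonality on the sphere, and everything reduces to symmetry plus the normalization $\sigma(S^{2n-1})=1$. The only point that deserves a little care is keeping the complex conjugation in $\overline{v_{i}}\,v_{j}$ correctly aligned with the $(i,j)$ indices of $A^{*}A$, so that the diagonal sum really produces $\mathrm{trace}(A^{*}A)$ and not its conjugate; this is a bookkeeping matter rather than a genuine difficulty.
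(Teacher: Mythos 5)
Your proof is correct and reaches the conclusion by the same overall route as the paper: both reduce the claim to computing the second-moment matrix $M = \int_{\|v\|=1} vv^{*}\,d\sigma(v)$ and use the unitary invariance of $\sigma$ to identify $M = \tfrac{1}{n}\mathrm{I}_n$. The difference lies in how that identification is established. The paper argues abstractly: $M$ is invariant under conjugation by every unitary, hence (citing a reference) it must be a scalar multiple of the identity, and the scalar is pinned down by taking traces. You instead exhibit explicit unitaries — diagonal phase matrices to annihilate the off-diagonal entries $M_{ij}$ with $i\neq j$, permutation matrices to show the diagonal entries are all equal, and then the same trace normalization $\sum_i M_{ii}=1$. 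Your version is a shade more elementary and self-contained, at the cost of a bit more index bookkeeping; the paper's is shorter given the cited fact about unitarily invariant matrices. One minor slip: in your final display you write $\sum_{i,j}c_{ij}M_{ji}$ where, with your conventions, it should be $\sum_{i,j}c_{ij}M_{ij}$; since $M$ turns out to be diagonal this is harmless, but it is exactly the kind of conjugation-alignment issue you flagged at the end.
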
	

\begin{proof}
	Recall that the Frobenius norm of \(A\) is  defined as
	\[
	\|A\|_F^2 = \mathrm{trace}(A^* A).
	\]  
	For any unit vector \( v \in \mathbb{C}^n \), we can express the squared norm of \( Av \) as  
	\[
	\|Av\|^2 = (Av)^*(Av) = v^* A^* A v,
	\]  
	which is a quadratic form of the Hermitian matrix \(A^* A\). Next, we use the fact that for any scalar \(\alpha\), we have \(\alpha = \mathrm{trace}(\alpha)\), and the cyclic property of the trace, allowing us to rewrite  
	\[
	v^* A^* A v = \mathrm{trace}(v^* A^* A v) = \mathrm{trace}((vv^*) A^* A).
	\]  
	Now, integrating over the unit sphere \( S^{2n-1} \), we get  
	\[
	\int_{\|v\|=1} v^* A^* A v \, d\sigma(v) = \mathrm{trace}\left( \int_{\|v\|=1} (vv^*) \, d\sigma(v) \, A^* A \right).
	\]Due to the invariance of the distribution of \(v\) under unitary transformations \(U \in U(n)\) (see \cite[Proposition 2.2, Page 501]{Pa_Yupa_22}), we conclude that  
	\[
	U \left(\int_{\|v\|=1} vv^* \, d\sigma(v)\right) U^* = \int_{\|v\|=1} (Uv)(Uv)^* \, d\sigma(v) = M,
	\]  
	where \( M \) is the matrix formed by the integral. Since \( M \) is invariant under all unitary transformations, it must be a scalar multiple of the identity matrix, i.e.,  $M = c \mathrm{I}_n$ for some $c\in\mathbb{C}$ (see \cite[page 4279]{Pa_We_89}). Taking the trace of both sides, we have  
	\[
	\mathrm{trace}(M) = \int_{\|v\|=1} \mathrm{trace}(vv^*) \, d\sigma(v) = \int_{\|v\|=1} \|v\|^2 \, d\sigma(v) = 1,
	\]  
	while  $\mathrm{trace}(c\mathrm{I}_n) = cn.$
	Thus, \( cn = 1 \), which gives \( c = \frac{1}{n} \). Therefore,  
	\[
	\int_{\|v\|=1} vv^* \, d\sigma(v) = \frac{1}{n} \mathrm{I}_n.
	\]Substituting this result back into the integral expression for the quadratic form, we obtain  
	\[
	\int_{\|v\|=1} \|Av\|^2 \, d\sigma(v) = \mathrm{trace}\left( \frac{1}{n} \mathrm{I}_n A^* A \right) = \frac{1}{n} \mathrm{trace}(A^* A) = \frac{1}{n} \|A\|_F^2.
	\]  
	Multiplying both sides by \( n \), we arrive at the desired identity  
	\[
	\|A\|_F^2 = n \int_{\|v\|=1} \|Av\|^2 \, d\sigma(v),
	\]  
	which completes the proof.
\end{proof}

Keeping Theorem~\ref{mmm} in mind, we now provide an estimate for the average Frobenius norm bound related to polynomial commutators, as follows.

\begin{theorem}
	Let \( A, B \in \mathrm{M}_n(\mathbb{C}) \), and let
	\[
	p(x) = \sum_{k=0}^d a_k x^k \in \mathbb{C}[x]
	\]
	be a nonconstant polynomial of degree \( d \). Then,
	\[
	\|p[A,B]\|_F^2 \leq n \left( \int_{\|v\|=1} \| [A, B] v\|^2 \, d\sigma(v) \right) \left( \sum_{k=1}^d |a_k|\, k\, \|A\|_2^{k-1} \|B\|_2^{k-1} \right)^2,
	\]
	where the integral captures the average quadratic effect of the commutator \( [A, B] \) over the unit sphere in \( \mathbb{C}^n \).
\end{theorem}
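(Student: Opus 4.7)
The plan is to combine Theorem~\ref{mmm} with the telescoping identity~\eqref{te} that drove Theorem~\ref{Fro}, producing a spherical-average refinement of the Frobenius bound. Applying Theorem~\ref{mmm} to $p[A,B]$ rewrites the left-hand side as
\[
\|p[A,B]\|_F^2 = n\int_{\|v\|=1}\|p[A,B]v\|^2\,d\sigma(v),
\]
so the claimed estimate reduces to an integrated pointwise bound on $\|p[A,B]v\|^2$ against $\sigma$.

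The next step is to substitute the expansion
\[
p[A,B]v = \sum_{k=1}^d a_k \sum_{j=0}^{k-1}(AB)^j [A,B](BA)^{k-1-j}v
\]
from~\eqref{te}, apply the triangle inequality to $\|p[A,B]v\|$, and then use submultiplicativity of the spectral norm to absorb each $(AB)^j$ into $\|A\|_2^{j}\|B\|_2^{j}$ and each $(BA)^{k-1-j}$ into $\|A\|_2^{k-1-j}\|B\|_2^{k-1-j}$. Since these exponents sum to $k-1$ for every $j$, the combined weight for fixed $k$ is $\|A\|_2^{k-1}\|B\|_2^{k-1}$, and the inner sum over the $k$ values of $j$ yields the combinatorial factor $k$, exactly as in the proof of Theorem~\ref{Fro}. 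Each summand then retains a residual factor measuring the action of $[A,B]$ on some intermediate unit vector derived from $v$.

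The principal obstacle, and the step that has no counterpart in the proof of Theorem~\ref{Fro}, is to replace those residual factors by quantities whose squared spherical average is $\int|v^*[A,B]v|^2\,d\sigma(v)$ rather than the coarser $\int\|[A,B]v\|^2\,d\sigma(v)=\|[A,B]\|_F^2/n$. The intended route is to exploit the unitary invariance of $\sigma$ (as already used in the proof of Theorem~\ref{mmm}): conjugating the variable of integration through the relevant powers of $AB$ and $BA$ sandwiches $[A,B]$ between matrices acting on a fresh spherical vector, after which a Cauchy--Schwarz step on the sphere trades the operator-norm expression $\|[A,B]w\|$ for the numerical-radius-type quadratic form $|w^*[A,B]w|$. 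Squaring the triangle-inequality estimate, integrating against $\sigma$, multiplying by $n$, and collecting the polynomial coefficients then delivers the target inequality
\[
\|p[A,B]\|_F^2 \leq n\Bigl(\int_{\|v\|=1}|v^*[A,B]v|^2\,d\sigma(v)\Bigr)\Bigl(\sum_{k=1}^d |a_k|\,k\,\|A\|_2^{k-1}\|B\|_2^{k-1}\Bigr)^2,
\]
with the Cauchy--Schwarz/unitary-invariance reduction that converts an operator-norm integrand into a quadratic-form integrand being the delicate ingredient on which the whole plan hinges.
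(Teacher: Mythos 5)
Your plan follows the paper's own route (Theorem~\ref{mmm}, the expansion~\eqref{te}, the triangle inequality, spectral-norm submultiplicativity, the factor $k$), but the one step you defer --- replacing the residual commutator factor by the quadratic form $|v^*[A,B]v|$ --- is a genuine gap, and the mechanism you sketch cannot supply it. In each summand the commutator acts on $(BA)^{k-1-j}v$, which is neither a unit vector nor uniformly distributed; the matrices $(AB)^j$ and $(BA)^{k-1-j}$ are not unitary, so the invariance of $\sigma$ exploited in the proof of Theorem~\ref{mmm} does not let you ``conjugate the variable of integration'' through them. More decisively, the Cauchy--Schwarz step you invoke points the wrong way: for a unit vector $w$ one has $|w^*[A,B]w|\le\|[A,B]w\|$, so Cauchy--Schwarz lets you pass from the quadratic form up to the norm, never from a norm bound down to $|w^*[A,B]w|$. (The paper's own proof commits exactly this reversal when it writes ``since $\|[A,B]v\|\ge|v^*[A,B]v|$\,'' and then replaces the norm by the quadratic form, and it also silently pretends $[A,B]$ acts on $v$ rather than on $(BA)^{k-1-j}v$; so you have correctly located the crux, but your proposal does not resolve it.)

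The gap cannot be closed, because the stated inequality is false. Take $p(x)=x$, so $p[A,B]=[A,B]$ and the claim reads $\|[A,B]\|_F^2\le n\int_{\|v\|=1}|v^*[A,B]v|^2\,d\sigma(v)$. But by Cauchy--Schwarz and Theorem~\ref{mmm},
\[
n\int_{\|v\|=1}|v^*[A,B]v|^2\,d\sigma(v)\;\le\;n\int_{\|v\|=1}\|[A,B]v\|^2\,d\sigma(v)\;=\;\|[A,B]\|_F^2,
\]
and equality would force $[A,B]v$ to be parallel to $v$ for $\sigma$-almost every $v$, i.e.\ $[A,B]$ scalar, hence $[A,B]=0$ since its trace vanishes. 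Thus for any noncommuting $A,B$ the right-hand side is strictly smaller than the left-hand side; indeed an exact fourth-moment computation gives $n\int_{\|v\|=1}|v^*[A,B]v|^2\,d\sigma(v)=\|[A,B]\|_F^2/(n+1)$. So no completion of your outline (nor of the paper's argument) can prove the theorem as stated: the factor one can legitimately extract from each summand is $\|[A,B]\|_2$ or, after averaging, $\|[A,B]\|_F$ as in Theorem~\ref{Fro}, not the spherical average of $|v^*[A,B]v|^2$. The statement would need to be reformulated (e.g.\ with $\int_{\|v\|=1}\|[A,B]v\|^2\,d\sigma(v)=\|[A,B]\|_F^2/n$ in place of the quadratic-form integral) before a proof along these lines could go through.
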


\begin{proof}
	Starting from the expansion \eqref{te}, one knows that  
	\[
	p[A,B]
	=\sum_{k=1}^{d}a_k\;\sum_{j=0}^{k-1}(AB)^{j}\,[A,B]\,(BA)^{\,k-1-j}.
	\]
	Define $T_{k,j} \;=\;(AB)^{j}\,[A,B]\,(BA)^{\,k-1-j}$,
	so that
	\[
	p[A,B]\;=\;\sum_{k=1}^d\sum_{j=0}^{k-1}a_k\,T_{k,j}.
	\]
	Set  $C \;=\; p[A,B]$. Taking Theorem~\ref{mmm} into account, it follows that  
	\[
	\|C\|_F^2
	= n \int_{\|v\|=1} \|C\,v\|^2 \,d\sigma(v),
	\]
	where the integration runs over the unit sphere with normalized surface measure \(\sigma\).  In particular, it is known that
	\[
	C\,v
	=\sum_{k=1}^d\sum_{j=0}^{k-1}a_k\,T_{k,j}v,
	\]
	and the triangle inequality gives
	\[
	\|C\,v\|
	\;\le\;
	\sum_{k=1}^d|a_k|\;\sum_{j=0}^{k-1}\|T_{k,j}v\|.
	\]
	Since each block satisfies
	\[
	\|T_{k,j}v\|
	\;\le\;
	\|(AB)^j\|_2\,\|[A,B]v\|\,\|(BA)^{k-1-j}\|_2
	\;\le\;
	\|A\|_2^{\,k-1}\,\|B\|_2^{\,k-1}\,\|[A,B]v\|,
	\]
	and summing over \(j=0,\dots,k-1\) contributes a factor of \(k\),
	\[
	\|C\,v\|
	\;\le\;
	\left(\sum_{k=1}^d|a_k|\,k\,\|A\|_2^{\,k-1}\|B\|_2^{\,k-1}\right)
	\;\|[A,B]v\|.
	\]Squaring and integrating gives
	\[
	\int_{\|v\|=1}\|C\,v\|^2\,d\sigma(v)
	\;\le\;
	\left(\sum_{k=1}^d|a_k|\,k\,\|A\|_2^{\,k-1}\|B\|_2^{\,k-1}\right)^{\!2}
	\;\int_{\|v\|=1}\|[A,B]v\|^2\,d\sigma(v).
	\]
	Finally, multiplying by \(n\) yields the desired bound:
	\[
	\|p(AB)-p(BA)\|_F^2
	\;\le\;
	n\;\left(\int_{\|v\|=1}\!\|[A,B]v\|^2\,d\sigma(v)\right)
	\left(\sum_{k=1}^d|a_k|\,k\,\|A\|_2^{\,k-1}\|B\|_2^{\,k-1}\right)^{2},
	\]which completes the proof.
\end{proof}

{\small

}
\end{document}